\documentclass[11pt]{amsart}
\usepackage{amsmath, amssymb, tensor, amsthm, amsfonts, marginnote, tikz-cd, mathtools, stmaryrd, enumitem}
\usepackage[mathscr]{euscript}
\usepackage[hidelinks]{hyperref}
\setlist[enumerate]{itemsep=0.25em}
\usepackage[skip=2.5pt, indent=10pt]{parskip}

\newcommand\cN{\mathscr{N}}
\newcommand\cO{\mathscr{O}}

\def\bC{\mathbf{C}}

\def\bF{\mathbf{F}}

\def\bP{\mathbf{P}}

\def\bZ{\mathbf{Z}}

\newcommand\frb{\mathfrak{b}}
\newcommand\frg{\mathfrak{g}}
\newcommand\frh{\mathfrak{h}}
\newcommand\fri{\mathfrak{i}}
\newcommand\frj{\mathfrak{j}}
\newcommand\frn{\mathfrak{n}}
\newcommand\frp{\mathfrak{p}}
\newcommand\frq{\mathfrak{q}}

\newcommand\frsl{\mathfrak{sl}}

\def\sfG{\mathsf{G}}

\def\sfS{\mathsf{S}}

\def\sfZ{\mathsf{Z}}

\newcommand{\beq}{\begin{equation}}
\newcommand{\eeq}{\end{equation}}

\theoremstyle{plain}
\newtheorem{theorem}{Theorem}
\newtheorem{proposition}{Proposition}
\newtheorem{lemma}{Lemma}
\newtheorem{corollary}{Corollary}

\theoremstyle{definition}

\newtheorem{remark}{Remark}
\newtheorem{example}{Example}

\DeclareMathOperator{\Shv}{Shv}
\DeclareMathOperator{\Bun}{Bun}
\DeclareMathOperator{\id}{id}
\DeclareMathOperator{\Spec}{Spec}
\DeclareMathOperator{\Hom}{Hom}

\DeclareMathOperator{\Nilp}{Nilp}
\DeclareMathOperator{\Fl}{Fl}
\DeclareMathOperator{\Gr}{Gr}

\DeclareMathOperator{\St}{St}
\DeclareMathOperator{\glob}{out}

\DeclareMathOperator{\Rep}{Rep}

\DeclareMathOperator{\Perv}{Perv}
\DeclareMathOperator{\Hecke}{Hecke}
\DeclareMathOperator{\thick}{thick}

\DeclareMathOperator{\irreg}{irreg}
\DeclareMathOperator{\reg}{reg}

\DeclareMathOperator{\Map}{Map}

\DeclareMathOperator{\Ad}{Ad}
\DeclareMathOperator{\Av}{Av}

\DeclareMathOperator{\Lie}{Lie}
\DeclareMathOperator{\pt}{pt}

\DeclareMathOperator{\cpt}{cpt}
\DeclareMathOperator{\Gait}{Gait}

\DeclareMathOperator{\SSup}{SS}

\DeclareMathOperator{\GL}{GL}

\DeclareMathOperator{\aff}{aff}
\DeclareMathOperator{\gen}{gen}
\DeclareMathOperator{\Conf}{Conf}
\DeclareMathOperator{\Vect}{Vect}
\DeclareMathOperator{\Aut}{Aut}\DeclareMathOperator{\Spf}{Spf}

\title{The tilting property of Whittaker averaged central sheaves}
\author{Jeremy Taylor}
\begin{document}
\begin{abstract} 
We characterize the kernel of Iwahori--Whittaker averaging in microlocal terms. Applying this to automorphic sheaves, we generalize Færgeman and Raskin's theorem that anti-temperedness is equivalent to having irregular singular support. Moreover, using a Radon transform argument of Bezrukavnikov and Morton-Ferguson, we extend  the tilting property of Whittaker averaged central sheaves to integer coefficients.
\end{abstract}
\maketitle
\setcounter{tocdepth}{1}
\tableofcontents
\section{Introduction}
For a space acted on by the loop group, we study the averaging functor from spherical equivariant sheaves to the Iwahori--Whittaker category. We characterize its kernel in terms of singular support and the moment map.

Applying this to automorphic sheaves (using the geometry of the global nilpotent cone to show that the resulting singular support condition is independent of the choice of point), we generalize a theorem of Færgeman and Raskin to cover integer coefficients and tame ramification. Namely we prove that
\begin{enumerate}
\item\label{Result1} an automorphic sheaf is killed by Whittaker averaging at an unramified point of the curve if and only if it has irregular singular support.
\end{enumerate} 
Unlike \cite{FR25}, we use neither theorems of Losev nor the global Whittaker coefficients functor.

We further prove that
\begin{enumerate}\setcounter{enumi}{1}
\item\label{Result2} if an automorphic sheaf is killed by Whittaker averaging at every unramified point, then it is also killed by Whittaker averaging at every tamely ramified point,
\item\label{Result3} given a tilting representation of the Langlands dual group, the Hecke action of the corresponding Whittaker averaged central sheaf at a tamely ramified point is t-exact.
\end{enumerate}
Applying \eqref{Result3} to the affine Hecke category we prove that
\begin{enumerate}
\setcounter{enumi}{3}
\item\label{Result4} given a tilting representation, the corresponding Whittaker averaged central sheaf is tilting.
\end{enumerate}

This extends a theorem of \cite{AB09, BRR20} to cover universal monodromic sheaves with integer coefficients. (Their proof uses that certain quasi-minuscule tilting representations are simple, so it does not cover some coefficient fields of very small characteristic.)

In fact we prove the stronger result that
\begin{enumerate}\setcounter{enumi}{4}
\item\label{Result5} given a standard (respectively costandard) representation, the corresponding Whittaker averaged central sheaf admits a standard (respectively costandard) filtration.
\end{enumerate}
See Remark \ref{FiniteExact} for an application of \eqref{Result5}, for which \eqref{Result4} does not seem to suffice.

The tilting property \eqref{Result4} is important in the proof of Bezrukavnikov's equivalence \cite{AB09, Be16}, where it is used to prove the vanishing of certain higher Exts. 

The result that \eqref{Result3} in genus zero with two tamely ramified points implies \eqref{Result4} is due to Bezrukavnikov and Morton-Ferguson \cite{BMF24}.
For characteristic zero field coefficients, they prove \eqref{Result3} in genus zero with two tamely ramified points, but they use Bezrukavnikov's equivalence in the process, so they do not obtain a new proof of \eqref{Result4}. (Rather they generalize \eqref{Result4} to cover all convolution exact sheaves, not just those coming from Gaitsgory's central functor.) 

Thus our contribution is to prove \eqref{Result3} for integer coefficients and arbitrary curves without assuming Bezrukavnikov's equivalence.

\subsection*{Acknowledgements}
I benefited from many conversations with Gurbir Dhillon and David Nadler. I thank Dhillon for suggesting Lemma \ref{ConnectedFibers}. I thank Joakim Færgeman for answering my questions and Sam Raskin for comments on a draft. I am grateful to Mark Macerato and Calder Morton-Ferguson for helpful discussions. I was supported by NSF MSPRF 2503562.

\section{Singular support estimates}\label{SectionEstimates}
Here we show that singular support behaves well with respect to moment maps and convolution. Moreover we provide an isolated intersection condition which can be used to bound singular support from below.

\subsection*{Weakly constructible sheaves}
All algebraic geometry is over the complex numbers.

For a smooth variety $Y$, write $\Shv(Y)$ for the weakly constructible derived category of sheaves with integer coefficients. This category is typically not cocomplete. We work in the analytic topology. The term `weakly constructible' (see Definition 8.5.6 of \cite{KS90}) means locally constant along some stratification, but without imposing any finiteness conditions on stalks.

For a smooth locally of finite type algebraic stack $Y$, the derived category of all sheaves is defined as a limit under pullback over varieties mapping smoothly to $Y$. Let $\Shv(Y)$ be the full subcategory of objects whose pullback along every such map is weakly constructible.

Let $\Perv(Y)$ be the abelian subcategory of perverse sheaves. Following the conventions of \cite{KS90}, we impose no finiteness conditions on their stalks.

For $A \in \Shv(Y)$, write $\SSup(A) \subset T^*Y$ for its singular support. As explained in Appendix F.6 of \cite{AGKRRV20}, this notion extends to algebraic stacks because singular support behaves well with respect to smooth pullback.

\subsection*{Cartesian products}
The lemma says that singular support behaves well with respect Cartesian products. The argument is similar to Section 3.3 of \cite{FR25}.

Let $f: X \rightarrow Y$ be a map of smooth varieties.

\begin{lemma}\label{SSProduct}
Let $Z$ be a smooth variety and $\Lambda \subset T^*Z$ be a closed conical subset.
\begin{enumerate}[label=(\alph*)]
\item\label{SSProduct1} If $B \in \Shv(Z \times Y)$ satisfies $\SSup(B) \subset \Lambda \times T^*Y$, then $\SSup((\id \times f)^*B) \subset \Lambda \times T^*X$.
\item\label{SSProduct2} If $A \in \Shv(Z \times X)$ satisfies $\SSup(A) \subset \Lambda \times T^*X$, then $\SSup((\id \times f)_*A) \subset \Lambda \times T^*Y$.
\end{enumerate}
Similar holds for the $!$-functors.
\end{lemma}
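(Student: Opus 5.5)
The plan is to reduce both parts to the standard Kashiwara--Schapira estimates for singular support under pullback and pushforward, after first recasting the condition $\SSup \subset \Lambda \times T^*Y$ in a form these estimates preserve. The key observation is that $\SSup(B) \subset \Lambda \times T^*Y$ is a condition only in the $Z$-direction. We can detect it by testing against "$Z$-directional" covectors: a covector $(\zeta, \eta)$ at $(z,y)$ lies outside $\Lambda \times T^*Y$ exactly when $\zeta \notin \Lambda_z$, whatever $\eta$ is.

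For part \ref{SSProduct1}, I would factor $\id \times f$ as the graph embedding $Z \times X \to Z \times X \times Y$, $(z,x) \mapsto (z,x,f(x))$, followed by the smooth projection to $Z \times Y$.

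Smooth pullback along the projection $Z \times X \times Y \to Z \times Y$ is the easy step. Singular support transforms by $\SSup(p^*B) = p^\circ \SSup(B)$, so it lies in $\Lambda \times 0_X \times T^*Y \subset \Lambda \times T^*(X \times Y)$.

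The closed embedding $i$ is the main obstacle, since $i$ is generally not non-characteristic. For a general closed embedding, $\SSup(i^*C)$ is bounded by the image under $di$ of $\SSup(C) \hat{+} T^*_{Z\times X}(Z \times X \times Y)$. This uses Kashiwara--Schapira's $\hat{+}$ operation, which adds limits of sums, as in KS90 Corollary 6.4.4.

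The conormal to the graph $\Gamma = Z \times \Gamma_f$ has zero $Z$-component. Sums of covectors with $Z$-component in $\Lambda$ and covectors with zero $Z$-component therefore still have $Z$-component in $\Lambda$. Since $\Lambda$ is closed, this persists under the limits defining $\hat{+}$. Restricting along $di$ then yields a $Z$-component in $\Lambda$ and an arbitrary $X$-component, which gives part \ref{SSProduct1}.

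One must check that the $\hat +$ bound is valid for weakly constructible sheaves without finiteness hypotheses. KS90 proves its estimates in that generality, for arbitrary sheaves, so this holds.

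For part \ref{SSProduct2}, the same factorization works in reverse: $(\id\times f)_* = q_* \circ i_*$. For the closed embedding, $\SSup(i_*A) = (di)^{-1}$ of $\SSup(A)$, which has $Z$-component in $\Lambda$. The harder step is the projection $q: Z\times X\times Y \to Z\times Y$, which need not be proper.

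If $f$ is proper I would use the proper pushforward estimate directly. Covectors in the image are those whose pullback lies in $\SSup$, and their $Z$-component is unchanged.

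In general I would compactify. Using Nagata, factor $f$ as an open embedding $j: X \to \bar X$ followed by a proper map $\bar f$, with $\bar X$ possibly singular. Alternatively, I would avoid compactification by using constructibility to reduce to a local statement. Either route needs a bound for $j_*$ that keeps the $Z$-components in $\Lambda$. Here I would use the pushforward estimate of KS90 Theorem 6.3.1 or 6.4, which expresses the bound as a $\hat{+}$-sum with conormals to the boundary. The boundary is of the form $Z\times \partial$, so those conormals again have zero $Z$-component.

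The $!$-versions follow by Verdier duality. Duality preserves singular support up to the antipodal map, and $\Lambda$ is conical, hence stable under it.
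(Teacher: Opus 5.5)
Your part (a) is the paper's argument: graph embedding followed by smooth projection, with the $\hat{+}$ bound of \cite{KS90} Corollary 6.4.4 for the embedding. Your treatment of the proper case of (b) is also fine.

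The gap is the bound for $j_*$ in the non-proper case. For an open embedding $\Omega \subset M$ with arbitrary complement, the estimate you cite (\cite{KS90}, Theorem 6.3.1) is a $\hat{+}$-sum with the conormal cone $N^*(\Omega)$, the polar of the strict normal cone of $\Omega$. It is not a sum with conormal bundles of the boundary. At any boundary point near which $\Omega$ is dense, the strict normal cone is empty, so $N^*(\Omega)$ is the entire cotangent fiber. This holds at every boundary point of a dense Zariski open set in a complex variety, because the complement has real codimension $\geq 2$.

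Take $\Omega = Z \times X \subset Z \times \bar X$. Pair points of the zero section of $\SSup(F)$ accumulating at $(z,p)$, with $p \in \partial$, against arbitrary covectors at $(z,p)$. This shows the bound contains the whole fiber $T^*_{(z,p)}(Z \times \bar X)$, with arbitrary $Z$-component. So the claim that the boundary contributes only covectors with zero $Z$-component is false for this estimate, and the $Z$-direction control is lost exactly here.

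There are two further problems:
\begin{itemize}
\item With $\bar X$ singular and $\partial$ an arbitrary closed subvariety, there is no conormal bundle of the boundary. There is not even a cotangent bundle of $Z \times \bar X$ in which to run \cite{KS90}.
\item The alternative of reducing to a local statement does not help, since pushforward along a non-proper map is not local on the source.
\end{itemize}

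The missing step is the paper's device: make everything smooth, then remove smooth strata one at a time.
\begin{enumerate}
\item By Nagata and resolution of singularities, take $\bar X$ smooth and proper.
\item Factor $f$ as $X \to X \times Y \to \bar X \times Y \to Y$, that is, closed embedding, open embedding, proper projection.
\item Stratify $(\bar X - X) \times Y$, so that the open embedding becomes a chain of open embeddings, each removing a closed smooth submanifold $S$.
\end{enumerate}
For such a step, the pushforward is bounded by a $\hat{+}$-sum with the honest conormal bundle; the paper cites Corollary 6.3.2 and Remark 6.2.8(ii) of \cite{KS90}. After crossing with $Z$, this is the conormal bundle of $Z \times S$, namely $0_Z \times T^*_S(\bar X \times Y)$. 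It has zero $Z$-component, so your closedness-of-$\Lambda$ argument then applies verbatim.

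A smaller point concerns the $!$-versions. Deducing them by Verdier duality requires biduality, which fails for weakly constructible sheaves over $\bZ$ with non-finitely-generated stalks, the generality of the paper. Instead, use the $!$-versions of the same \cite{KS90} estimates, which hold directly.
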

\begin{proof}
First we prove \ref{SSProduct1}. Factor $f$ as a closed embedding followed by a smooth map. Indeed we can use the inclusion of the graph $X \rightarrow X \times Y$ followed by the projection $X \times Y \rightarrow Y$.
For the smooth map, use Proposition 5.4.5 of \cite{KS90}. For the closed embedding, use Corollary 6.4.4 and Proposition 6.2.4(a) of \cite{KS90}. (If it is an embedded submanifold then Remark 6.2.8(i) of \cite{KS90} is simpler to parse.)

Now we prove \ref{SSProduct2}. First we factor $f$ as a composition of proper maps to smooth varieties and open embeddings into smooth varieties. Indeed Nagata's compactification theorem implies that $X$ is an open subvariety of a proper variety $\overline{X}$. By resolution of singularities we may assume that $\overline{X}$ is also smooth. Then we have the factorization $X \rightarrow X \times Y \rightarrow \overline{X} \times Y \rightarrow Y$.

For the proper maps, use Proposition 5.4.4 of \cite{KS90}. 
The open embedding can be factored as a sequence of open embeddings, such that the complement of each is an embedded submanifold.
For each such embedding, use Corollary 6.3.2 and Remark 6.2.8(ii) of \cite{KS90}.
\end{proof}

\subsection*{Moment maps}
The following lemma says that the Lagrangian correspondence between cotangent bundles induced by an equivariant map is compatible with the moment maps.

If $H$ is an algebraic group acting on $X$, let $\mu_X: T^*X \rightarrow \frh^*$ denote the moment map, taking values in the dual Lie algebra.

\begin{lemma}\label{MomentCorrespond}
Let $f : X \rightarrow Y$ be an $H$-equivariant map between smooth varieties. Then the following square commutes
\[\begin{tikzcd} X \times_Y T^*Y \arrow[d, "a"'] \arrow[r, "b"] & \arrow[d, "\mu_Y"] T^*Y  \\
T^*X \arrow[r, "\mu_X"']& \frh^*. \end{tikzcd}\] 
\end{lemma}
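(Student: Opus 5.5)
The plan is to unwind the definitions and reduce commutativity of the square to a pointwise identity, which will follow from the chain rule applied to the equivariance of $f$. A point of $X \times_Y T^*Y$ is a pair $(x, \eta)$ with $x \in X$ and $\eta \in T^*_{f(x)}Y$. The two maps are
\[ a(x,\eta) = (x, df_x^*\eta) \quad\text{and}\quad b(x,\eta) = (f(x), \eta), \]
where $df_x^* : T^*_{f(x)}Y \to T^*_xX$ is the transpose of the differential. Recall that for $\xi \in \frh$ the action gives a vector field $v^X_\xi$ on $X$, namely the derivative at $t=0$ of $\exp(t\xi)\cdot x$, and the moment map is defined by
\[ \langle \mu_X(x,\omega), \xi\rangle = \omega(v^X_\xi(x)), \]
and similarly for $\mu_Y$ with the vector field $v^Y_\xi$.

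The key step is to show that the infinitesimal action vector fields are $f$-related, that is, $df_x(v^X_\xi(x)) = v^Y_\xi(f(x))$. This comes from differentiating the identity $f(h\cdot x) = h \cdot f(x)$ along $h$ at the identity. More precisely, consider the orbit maps $H \to X$, $h \mapsto hx$, and $H \to Y$, $h \mapsto h f(x)$. Equivariance says the second map equals $f$ composed with the first. Taking differentials at $e \in H$ and applying the chain rule gives the claim.

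Given this, the computation is a single chain of equalities:
\[ \langle \mu_X(a(x,\eta)),\xi\rangle = (df_x^*\eta)(v^X_\xi(x)) = \eta\bigl(df_x v^X_\xi(x)\bigr) = \eta\bigl(v^Y_\xi(f(x))\bigr) = \langle \mu_Y(b(x,\eta)),\xi\rangle. \]
This holds for all $\xi$, so $\mu_X \circ a = \mu_Y \circ b$.

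To make the argument valid scheme-theoretically rather than only on closed points, I would phrase it in terms of morphisms of varieties. Everything above is linear algebra fiberwise over $X$, with the vector fields given by the morphisms $\frh \times X \to TX$. Alternatively, since both sides are morphisms between reduced varieties ($X\times_Y T^*Y$ is a vector bundle over the smooth $X$), agreement on $\mathbf{C}$-points suffices.

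I do not expect any real obstacle. The only care needed is in sign conventions: whether the moment map uses $v_\xi$ or $-v_\xi$. Since the same convention is used on $X$ and on $Y$, the sign cancels.
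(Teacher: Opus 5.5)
Your proposal is correct and follows essentially the same route as the paper. The paper also derives $f_* v_x = v_{f(x)}$ from equivariance and then runs the same chain $\langle f^*\eta, v_x\rangle = \langle \eta, f_* v_x\rangle = \langle \eta, v_{f(x)}\rangle$. Your chain-rule justification of that identity, and your remark that agreement on $\bC$-points suffices because the source is reduced, make explicit what the paper leaves implicit.
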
 
\begin{proof}
For $v \in \frh$, let $v_x \in T_xX$ and $v_y \in T_yY$ be the induced tangent vectors. Since $f$ is $H$-equivariant, we have $f_* v_x = v_{f(x)}$ in $T_{f(x)}Y$. Therefore, for $(x, \eta) \in X \times_Y T^*Y$ and $v \in \frh$, we have
\[\langle \mu_X a(x, \eta), v \rangle = \langle f^* \eta, v_x \rangle = \langle \eta, f_* v_x \rangle = \langle \eta, v_{f(x)} \rangle = \langle \mu_Yb(x, \eta), v\rangle.\]
This gives the desired $\mu_X a = \mu_Y b$.
\end{proof}

The following lemma says singular support behaves well with respect to moment maps. (In particular it implies that pushforward and pullback along $H$-equivariant maps preserve the property of weak $H$-constructibility.)

\begin{lemma}\label{EquivariantSS}
Let $f : X \rightarrow Y$ be an $H$-equivariant map between smooth varieties.
\begin{enumerate}[label=(\alph*)]
\item\label{EquivariantSS1} If $f$ is smooth and surjective, then $\mu_X(\SSup (f^* B)) = \mu_Y(\SSup (B))$.
\item\label{EquivariantSS2} In general, $\mu_X(\SSup (f^*B)) \subset \mu_Y(\SSup (B))$.
\item\label{EquivariantSS3} In general, $\mu_Y(\SSup (f_* A)) \subset \mu_X(\SSup (A))$.
\end{enumerate}
Similar holds for the $!$-functors.
\end{lemma}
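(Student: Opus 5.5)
The plan is to reduce everything to the standard microlocal estimates for pullback and pushforward from \cite{KS90}, and then apply Lemma \ref{MomentCorrespond} to pass to moment map images. Write $a: X \times_Y T^*Y \to T^*X$ for the codifferential $(x,\eta) \mapsto (x, f^*\eta)$ and $b : X \times_Y T^*Y \to T^*Y$ for the projection. The key input is the estimate
\[\SSup(f^*B) \subset a(b^{-1}(\SSup B)),\]
together with its dual $\SSup(f_*A) \subset b(a^{-1}(\SSup A))$. The pullback estimate holds for arbitrary $f$ once one uses the closure-type operation $f^\#$ in Corollary 6.4.4 of \cite{KS90}. The pushforward estimate needs properness, or a compactification argument, as we discuss below. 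Granting these, Lemma \ref{MomentCorrespond} gives $\mu_X \circ a = \mu_Y \circ b$. Hence
\[\mu_X(\SSup f^*B) \subset \mu_X a(b^{-1}\SSup B) = \mu_Y b(b^{-1}\SSup B) \subset \mu_Y(\SSup B),\]
which is \ref{EquivariantSS2}. Similarly,
\[\mu_Y(\SSup f_*A) \subset \mu_Y b(a^{-1}\SSup A) = \mu_X a(a^{-1}\SSup A) \subset \mu_X(\SSup A),\]
which is \ref{EquivariantSS3}.

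For \ref{EquivariantSS1}, the pullback estimate is an equality when $f$ is smooth, by Proposition 5.4.5 of \cite{KS90}: $\SSup(f^*B) = a(b^{-1}\SSup B)$. When $f$ is also surjective, $b$ is surjective, so $b(b^{-1}\SSup B) = \SSup B$. The displayed chain for \ref{EquivariantSS2} then becomes a chain of equalities. For the $!$-functors, I will use $f^! = f^*[2d]$ when $f$ is smooth. In general I will use Verdier duality, since $\SSup(\mathbb{D}A) = \SSup(A)$ and $f^! = \mathbb{D}f^*\mathbb{D}$, $f_! = \mathbb{D}f_*\mathbb{D}$. Verdier duality behaves well here because weak constructibility with integer coefficients and the corresponding duality statement in \cite{KS90} apply.

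The main obstacle is making the pullback and pushforward estimates rigorous for non-smooth, non-proper $f$. For pullback along a non-smooth map, the estimate $a(b^{-1}\SSup B)$ fails without a non-characteristic hypothesis; one only gets the larger set $f^\#(\SSup B)$. To handle this, I will factor $f$ through its graph, as in Lemma \ref{SSProduct}: $X \to X \times Y \to Y$. Both maps are $H$-equivariant for the diagonal action. The projection is smooth and surjective, so it is covered by \ref{EquivariantSS1}. For the closed embedding $i : X \to X\times Y$, the point is that $i^\#(\Lambda)$ is computed as limits of $i^*$ of covectors in $\Lambda$ at nearby points. Since $\mu$ is continuous and equivariant for the normal directions, these limits have moment images in the closure of $\mu(\Lambda)$, and $\mu(\Lambda)$ is conic. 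I expect to need that the relevant images are closed, or to state the containment up to closure. If closedness is not automatic, I will check it using that $\SSup$ is a closed conic Lagrangian, hence a finite union of conormals $T^*_S$ to strata. I would verify this limiting computation directly in coordinates.

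For pushforward, I will use the Nagata and resolution factorization from Lemma \ref{SSProduct} into proper maps and open embeddings. These can be chosen $H$-equivariantly by equivariant compactification (Sumihiro) and equivariant resolution. The proper case is handled by Proposition 5.4.4 of \cite{KS90}. For an open embedding $j$ whose complement is a divisor, the estimate of Corollary 6.3.2 of \cite{KS90} adds sums of covectors with conormal directions to the boundary. I need these added conormal directions to have moment image in $\mu(\SSup A)$. This is again a limiting argument, which I expect to be the most delicate point.
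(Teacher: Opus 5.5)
Your route differs from the paper's and can be completed, but two of your planned steps have to change.

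\textbf{How the paper argues.} The paper never analyses the moment map under the limiting operations $f^\#$ and $\hat{+}$. It pulls back along the action maps $a_Y\colon H\times Y\to Y$ and $a_X\colon H\times X\to X$. These are smooth, surjective, equivariant for left multiplication on the $H$ factor, and form a Cartesian square with $\id\times f$.
\begin{enumerate}
\item Part \ref{EquivariantSS1} turns the moment-map condition into the product condition $\SSup(a_Y^*B)\subset\mu_H^{-1}(\mu_Y(\SSup B))\times T^*Y$.
\item The non-equivariant Lemma \ref{SSProduct} transports such product conditions along $\id\times f$ for arbitrary $f$. This is where the \cite{KS90} estimates and the Nagata/resolution factorization enter, with the $H$ factor inert.
\item Part \ref{EquivariantSS1} for $a_X$ brings the result back, using base change $a_Y^*f_*\simeq(\id\times f)_*a_X^*$ for \ref{EquivariantSS3}.
\end{enumerate}
This needs no equivariant completion or resolution and no analysis of $\mu$ at a boundary. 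Your route needs Sumihiro and an equivariant resolution so that the boundary strata are smooth and $H$-stable, in exchange for a more direct argument.

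\textbf{The limit computations you left open do go through.} Suppose $(x_0;\xi_0)\in f^\#(\SSup B)$ is witnessed by $x_n\to x_0$ and $(y_n;\eta_n)\in\SSup B$ with ${}^tf'(x_n)\eta_n\to\xi_0$ and $|f(x_n)-y_n|\,|\eta_n|\to0$. Equivariance gives $\langle{}^tf'(x_n)\eta_n,v_{x_n}\rangle=\langle\eta_n,v_{f(x_n)}\rangle$. Since $v$ is locally Lipschitz, this differs from $\langle\eta_n,v_{y_n}\rangle$ by $O(|f(x_n)-y_n|\,|\eta_n|)$. Hence $\mu_X(x_0;\xi_0)=\lim\mu_Y(y_n;\eta_n)$. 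This works for any $f$, so the graph factorization is unnecessary.

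For the open-embedding estimate, the added covectors $(y_n;\eta_n)$ are conormal to a smooth closed $H$-stable stratum, so $\langle\eta_n,v_{y_n}\rangle=0$. Moreover $|\eta_n|\le|\xi_n|+O(1)$, and together with $|x_n-y_n|\,|\xi_n|\to0$ this gives $\mu(x;\xi)=\lim\mu(x_n;\xi_n)$.

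\textbf{First correction: closures.} These arguments only give containment in the closure of $\mu(\SSup(\cdot))$. Your proposed verification of closedness cannot succeed, because closedness is false. Let $H=(\bC^\times)^2$ act on $\bC^2\subset\bP^2$ by $[y_0:y_1:y_2]\mapsto[y_0:sy_1:ty_2]$, and take $A=\bZ_L$ for $L=\{y_1+y_2=1\}$. Then $\mu(\SSup A)=\{(a,b):a+b\neq0\}\cup\{0\}$. But $\SSup(j_*A)$ contains the whole cotangent fibre at $[0:1:-1]$, whose moment image is $\bC\cdot(1,-1)$.

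So \ref{EquivariantSS2} and \ref{EquivariantSS3} should be stated with closures on the right. This is also all the paper's argument gives, since Lemma \ref{SSProduct} requires $\Lambda=\mu_H^{-1}(\mu_Y(\SSup B))$ to be closed. It is also all that is used later, where the target sets (e.g.\ $\frb^{\irreg}$, nilpotent cones) are closed.

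\textbf{Second correction: the $!$-functors.} The Verdier duality reduction is unavailable here. Without finiteness conditions on stalks, duality is not an involution on weakly constructible sheaves; already $\mathbf{Q}$ on a point is not reflexive over $\bZ$. So $f^!$ and $f_!$ cannot be rewritten as duality conjugates of $f^*$ and $f_*$. Instead, use the $!$-counterparts of the \cite{KS90} estimates you already invoke: those for $f^!$, for $j_!$, and $f_!=f_*$ for proper $f$. The same limit arguments apply to them.
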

\begin{proof}
First we prove \ref{EquivariantSS1}. Singular support behaves well with respect to smooth pullback (see Proposition 5.4.5 of \cite{KS90}). Therefore Lemma \ref{MomentCorrespond} implies the desired \ref{EquivariantSS1}.

We will prove the remaining parts by pulling back along the action maps $a_Y$ and $a_X$. There is a Cartesian square
\[\begin{tikzcd}
H \times X \arrow[r, "\id \times f"] \arrow[d, "a_X"'] & H \times Y \arrow[d, "a_Y"] \\
X \arrow[r, "f"'] & Y.
\end{tikzcd}\]

Now we prove \ref{EquivariantSS2}.
Part \ref{EquivariantSS1} implies $\SSup(a_Y^* B) \subset \mu_H^{-1}(\mu_Y(\SSup (B))) \times T^*Y$. Lemma \ref{SSProduct}\ref{SSProduct1} implies \[\SSup(a_X^*f^* B) = \SSup((\id \times f)^* a_Y^*B) \; \subset \; \mu_H^{-1}(\mu_Y(\SSup (B))) \times T^*X.\]
Therefore \ref{EquivariantSS1} implies the desired $\mu_X(\SSup (f^*B)) \subset \mu_Y(\SSup (B))$.

Finally we prove \ref{EquivariantSS3}. Part \ref{EquivariantSS1} implies $\SSup(a_X^* A) \subset \mu_H^{-1}(\mu_X(\SSup (A))) \times T^*X$. 
Lemma \ref{SSProduct}\ref{SSProduct2} implies \[\SSup(a_Y^*f_* A) = \SSup((\id \times f)_* a_X^* A) \; \subset \; \mu_H^{-1}(\mu_X(\SSup (A))) \times T^*Y.\]
Therefore \ref{EquivariantSS1} implies the desired $\mu_Y(\SSup (f_*A)) \subset \mu_X(\SSup (A))$.
\end{proof}

\subsection*{Convolution} The following proposition says that singular support behaves well with respect to convolution.

Let $H$ act on $Y$ and let $P, Q \subset H$ be subgroups.
We will work in one of the following two settings.

\begin{enumerate}[label=(\alph*)]
\item\label{SSConvolve1} For the finite dimensional setting, $H$ is an algebraic group and $Y$ is a smooth variety. We do not assume that $H/P$ is proper, and consider either the $!$ or $*$-version of convolution.

(This will be important in the proof of Theorem \ref{AffineKernel}.)

\item\label{SSConvolve2} For the infinite dimensional setting, $H$ is a group ind-scheme and $Y$ is a pro-smooth scheme. Moreover $P$ and $Q$ are pro-algebraic group schemes such that $P \backslash Y$ and $Q \backslash Y$ are smooth varieties. We assume that $H/P$ is ind-proper.

(This is not really necessary in this paper, it is only used in the proof of Proposition \ref{NadlerYun} to explain a theorem of \cite{NY19a}.)
\end{enumerate}
Denote the moment maps by \[\mu_P:T^*(P \backslash Y)\rightarrow \frp^{\perp}/P \rightarrow \frh^*/H \quad \text{and} \quad \mu_Q:T^*(Q\backslash Y)\rightarrow \frq^{\perp}/Q \rightarrow \frh^*/H.\]

\begin{proposition}\label{SSConvolve}
If $B \in \Shv(Q \backslash H / P)$ and $A \in \Shv(P \backslash Y)$ then \beq \label{SSConvolveInclusion} \mu_Q(\SSup(B*A)) \; \subset \; \mu_P(\SSup(A)).\eeq
\end{proposition}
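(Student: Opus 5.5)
Convolution $B*A$ is computed by the correspondence
\[ Q\backslash H/P \;\xleftarrow{\;p\;}\; Q\backslash H\times^P Y \;\xrightarrow{\;m\;}\; Q\backslash Y, \]
where the middle space maps to $P\backslash Y$ via $q:[h,y]\mapsto y$. Explicitly $B*A = m_*(p^*B\otimes q^*A)$ (or $m_!$ in the $!$-version). The plan is to bound the moment map image at each stage using Lemma \ref{EquivariantSS}, so that the factor $B$ only contributes through a constraint that is automatically satisfied.

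\textbf{Step 1: pass to equivariant maps of smooth varieties.} Work upstairs on $H\times^P Y$, or better on $H\times Y$ with its commuting actions of $Q$ (left on $H$) and $P$ (acting by $(h,y)\mapsto(hg^{-1},gy)$). The relevant statement is about moment maps for the \emph{diagonal} $H$-symmetry. On $H\times Y$, let $H$ act by $h'\cdot(h,y)=(h,h'y)$ twisted appropriately. The cleaner route is to use the isomorphism
\[ H\times^P Y \cong H\times Y/P' \quad\text{via } [h,y]\mapsto (h,hy), \]
under which $m$ becomes the projection to the second factor and $q$ becomes $(h,z)\mapsto h^{-1}z$.

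In case \ref{SSConvolve2} one first reduces to finite-dimensional quotients, using that $P\backslash Y$ and $Q\backslash Y$ are smooth varieties and that $H/P$ is ind-proper, so that $m$ is ind-proper and $m_*$ is a colimit of proper pushforwards.

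\textbf{Step 2: the key observation.} In the twisted coordinates $(h,z)$, the sheaf $q^*A$ pulled back to $H\times Y$ is the pullback of $A$ along the action map $a:(h,z)\mapsto h^{-1}z$. By Lemma \ref{EquivariantSS}\ref{EquivariantSS1} applied to $a$, which is smooth and surjective, the covectors $(\xi,\eta)$ in $\SSup(a^*A)$ satisfy the following. The $Y$-component $\eta$ at $z$ corresponds to a covector $\eta'$ at $h^{-1}z$ lying in $\SSup(A)$, and $\mu_Y(\eta)$ is $\Ad^*_h\mu(\eta')$, so it lies in the $H$-saturation of $\mu_P(\SSup A)$.

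The tensor product with $p^*B$ only modifies the $H$-component $\xi$, since $p^*B$ is pulled back from the $H$-factor. Its singular support is of the form $T^*H\times 0$, and $\SSup(p^*B\otimes a^*A)\subset \SSup(p^*B)+\SSup(a^*A)$. This addition is fine in the relevant sense, either by transversality or by working with weakly constructible sheaves and the conical estimate of KS90 Corollary 6.4.5; a non-transversal sum would still leave the $Y$-component unchanged. Hence every covector in the tensor product has $Y$-component with moment image in $\mu_P(\SSup A)$, viewed in $\frh^*/H$.

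\textbf{Step 3: pushforward along the projection.} Pushforward along the projection $m$ to $Y$, followed by descent to $Q\backslash Y$, only retains covectors of the form $(0,\eta)$, by Lemma \ref{SSProduct}\ref{SSProduct2} or the general KS90 estimate for proper pushforward. Therefore the covectors in $\SSup(B*A)$ have moment image contained in $\mu_P(\SSup A)$, which is \eqref{SSConvolveInclusion}. By Lemma \ref{EquivariantSS}\ref{EquivariantSS3} the same holds for $m_!$ and for a non-proper $H/P$ in case \ref{SSConvolve1}.

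\textbf{Main obstacle.} The main obstacle is Step 2. The tensor product estimate requires a non-characteristic condition, which may fail, and descent along the quotients by $P$ and $Q$ must be handled via Lemma \ref{EquivariantSS}\ref{EquivariantSS1}. My first attempt would avoid the tensor product entirely. I would write $p^*B\otimes a^*A$ as the restriction of the exterior product $B\boxtimes A$ along the graph-type map $(h,z)\mapsto(h,h^{-1}z)$, which is an isomorphism, and then apply Lemma \ref{SSProduct} directly with $\Lambda=T^*H$.
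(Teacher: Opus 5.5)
In setting (a) your argument works, by a route different from the paper's, once Step 3 is justified correctly for non-proper $H/P$. The infinite dimensional setting (b) is not actually handled.

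\textbf{Setting (a).} You untwist $H\times^P Y\cong H/P\times Y$ via $[h,y]\mapsto(hP,hy)$ (your formula in Step 1 is garbled). Then $m$ becomes the projection and $q$ becomes $(hP,z)\mapsto Ph^{-1}z$. From this you read off that the $Y$-component of each covector in $\SSup(p^*B\otimes q^*A)$ is the transport under $z\mapsto h^{-1}z$ of a covector in $\SSup(A)$. Its moment image is therefore $\Ad^*_h$-conjugate to a point of $\mu_P(\SSup A)$.

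The transversality you worry about is automatic. A covector in $\SSup(q^*A)$ with zero $Y$-component vanishes, since $z\mapsto h^{-1}z$ is an isomorphism; your exterior-product reformulation also settles it.

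The paper argues differently. It pulls back along the action map to reduce, by Lemma \ref{EquivariantSS}\ref{EquivariantSS1}, to $Y=H\times X$. There a commuting \emph{right} $H$-action makes the whole convolution correspondence equivariant, with trivial action on $Q\backslash H/P$. Lemma \ref{EquivariantSS}\ref{EquivariantSS2} and \ref{EquivariantSS3} then bound the right moment maps, and \eqref{LeftRightTriangle} passes to $\frh^*/H$. That route needs no tensor product estimate and treats proper and non-proper pushforward uniformly. Yours makes the coadjoint twist explicit.

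\textbf{Step 3.} As written, Step 3 only covers proper $H/P$. For non-proper $m$ it is false that only covectors $(0,\eta)$ survive, and the non-proper case is exactly what the paper needs (the $!$-averaging in Theorem \ref{AffineKernel}). There are two correct justifications:
\begin{enumerate}
\item Lemma \ref{SSProduct}\ref{SSProduct2} for $H/P\times Y\to Y$, with $\Lambda\subset T^*Y$ the conic set defined by your moment condition;
\item Lemma \ref{EquivariantSS}\ref{EquivariantSS3} with $H$ acting on $H/P\times Y$ through the $Y$-factor only, applied after pulling back to $Y$, since this action does not commute with $Q$.
\end{enumerate}
The choice of action matters. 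For the diagonal action, which is the natural one on $H\times^P Y$, the map $q$ is invariant. Lemma \ref{EquivariantSS}\ref{EquivariantSS3} would then bound the result by moment data of $\SSup(B)$ rather than $\SSup(A)$.

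\textbf{Setting (b).} Here there is a genuine gap. Steps 2 and 3 manipulate sheaves on $H\times Y$ and use $T^*H$, the smooth map $(h,z)\mapsto h^{-1}z$, and Lemma \ref{EquivariantSS}; in (b) all of these are of infinite type. The untwisting also uses the $H$-action on $Y$, which does not pass to finite-dimensional quotients of $Y$. For instance, for $H=G_F\rtimes\Aut$ in Proposition \ref{NadlerYun}, no subgroup of finite codimension in $P$ is normal in $H$. Ind-properness of $H/P$ only controls the last pushforward, so ``reduce to finite-dimensional quotients'' has no content as stated.

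The paper instead proves the covector-level inclusion \eqref{MuQMuPCotangent} on the Lagrangian correspondence of the finite type stacks $Q\backslash H/P\times P\backslash Y\leftarrow Q\backslash H\times^P Y\to Q\backslash Y$. It again reduces to $Y=H\times X$, now pointwise via Lemma \ref{MomentCorrespond}, and only then invokes pro-smooth pullback and ind-proper pushforward. Your Step 2 computation is pointwise linear algebra and could be run at that level, but that step is missing.
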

\begin{proof}
First we consider the finite dimensional setting \ref{SSConvolve1}. 
By Lemma \ref{EquivariantSS}\ref{EquivariantSS1} it suffices to consider the case $Y = H \times X$ with $H$ acting trivially on $X$. Denote the moment maps for the right $H$-actions by \[\nu_P: T^*(P \backslash H) \rightarrow \frh^* \quad \text{and} \quad \nu_Q: T^*(Q \backslash H) \rightarrow \frh^*.\]
Convolution is given by pullback then pushforward along 
\[Q \backslash H/P \times P \backslash H \times X  \leftarrow Q \backslash H \times^P H \times X \rightarrow Q \backslash H \times X.\]
These maps are equivariant for the right $H$-action, therefore Lemma \ref{EquivariantSS} implies that $\nu_Q(\SSup(B*A)) \subset \nu_P(\SSup(A))$.

The moment maps for the left and right actions of a group on itself differ by the adjoint action on the dual Lie algebra. Therefore the following diagrams commute
\beq\label{LeftRightTriangle} \begin{tikzcd}
T^*(P \backslash H) \arrow[r, "\nu_P"] \arrow[d, "\mu_P"'] & \frh^* \arrow[dl] & & T^*(Q \backslash H) \arrow[r, "\nu_Q"] \arrow[d, "\mu_Q"'] & \frh^* \arrow[dl]\\
\frh^*/H  & & &  \frh^*/H, & 
\end{tikzcd}\eeq
giving the desired \eqref{SSConvolveInclusion}.

Now we consider the infinite dimensional setting \ref{SSConvolve2}. The above argument does not work directly, because it would involve sheaves on infinite type schemes. Instead we use the additional ind-properness assumption to argue at the level of cotangent bundles.

The convolution diagram
\[Q \backslash H/P \times P \backslash Y  \leftarrow Q \backslash H \times^P Y \rightarrow Q \backslash Y\]
induces a Lagrangian correspondence between cotangent bundles
\[T^*(Q \backslash H/P \times P \backslash Y) \xleftarrow{a} T^*(Q \backslash H/P \times P \backslash Y)|_{Q \backslash H \times^P Y} \underset{T^*(Q \backslash H \times^P Y)}{\times} T^*(Q \backslash Y)|_{Q \backslash H \times^P Y}\xrightarrow{b} T^*(Q \backslash Y).\]

We claim that for any closed conic subset $V \subset T^*(P \backslash Y)$ we have \beq \label{MuQMuPCotangent}\mu_Q(b(a^{-1}(T^*(Q \backslash H/P) \times V))) \; \subset \; \mu_P(V).\eeq
Indeed by Lemma \ref{MomentCorrespond} and commutavity of
\[\begin{tikzcd}
Q \backslash H/P \times P \backslash H \times Y \arrow[d] & \arrow[l]  Q \backslash H \times^P H \times Y \arrow[r] \arrow[d] & Q \backslash H \times Y \arrow[d] \\
Q \backslash H/P \times P \backslash Y & \arrow[l]  Q \backslash H \times^P Y \arrow[r] & Q \backslash Y,
\end{tikzcd}\]
it suffices to consider the case $Y = H \times X$ with $H$ acting trivially on $X$. Then Lemma \ref{MomentCorrespond} and commutativity of \eqref{LeftRightTriangle} imply the desired \eqref{MuQMuPCotangent}.

Because singular support behaves well with respect to pro-smooth pullback and ind-proper pushforward, \eqref{MuQMuPCotangent} implies the desired \eqref{SSConvolveInclusion}.
\end{proof}

\subsection*{Isolated intersection condition}
Having provided many bounds from above on singular support, we now also provide one bound from below. Namely the following proposition gives an isolated intersection condition under which a covector is contained in the singular support of the pushforward of a sheaf.

Let $f: X \rightarrow Y$ be a smooth map between smooth varieties. It induces a Lagrangian correspondence between cotangent bundles \[T^*X \xleftarrow{a} X \times_Y T^*Y \xrightarrow{b} T^*Y.\]

\begin{proposition}\label{Below}
Let $A \in \Shv(X)$ and $(x, \xi) \in X \times_Y T^*Y$ be a smooth point of $\SSup(A)$. Then the following are equivalent
\begin{enumerate}[label=(\alph*)]
\item\label{Below1} $(x, \xi)$ is an isolated point in $a^{-1}(\SSup(A)) \cap b^{-1}(\xi)$ and the intersection is clean there,
\item\label{Below2} $X \times_Y T^*Y$ intersects $\SSup(A)$ transversely at $(x, \xi)$. 
\end{enumerate}
If the above holds then $\xi \in \SSup(f_!A)$ and $\xi \in \SSup(f_* A)$.
\end{proposition}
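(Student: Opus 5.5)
The plan has two parts. First, the equivalence of \ref{Below1} and \ref{Below2} is a linear-algebra statement about tangent spaces at the point $(x,\xi)$. Second, the conclusion $\xi \in \SSup(f_!A)$ and $\xi\in\SSup(f_*A)$ follows by reducing to a local microlocal model and using the microlocal Morse lemma of \cite{KS90}.

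For the equivalence, write $W = X\times_Y T^*Y$. Since $f$ is smooth, $a$ is a closed embedding, and $b$ is a smooth surjection whose fibres are the fibres of $f$. Set $\Lambda = \SSup(A)$, which is smooth near $a(x,\xi)$. By definition the intersection $a^{-1}(\Lambda)\cap b^{-1}(\xi)$ is clean and isolated at $(x,\xi)$ exactly when
\[ T\Lambda \cap TW \cap \ker(db) = 0. \]
Here $\ker(db)$ is the vertical tangent space $T_x(f^{-1}f(x))$ embedded in $TW$. Transversality in \ref{Below2} means $T\Lambda + TW = T(T^*X)$.

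I would reduce both conditions to symplectic linear algebra. Any irreducible component of $\Lambda$ through a smooth point is conic Lagrangian (involutivity, Theorem 6.5.4 of \cite{KS90}), so $T\Lambda$ is Lagrangian. The submanifold $W$ is coisotropic, and its symplectic orthogonal $TW^{\perp}$ is precisely $\ker(db)$, the characteristic foliation of $W$. Then
\[ (T\Lambda+TW)^\perp = T\Lambda^\perp \cap TW^\perp = T\Lambda\cap\ker(db), \]
so transversality holds if and only if $T\Lambda \cap \ker(db)=0$. Since $\ker(db)\subset TW$, this is the same as the clean-isolated condition. Hence \ref{Below1} and \ref{Below2} are equivalent.

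For the conclusion, work locally near $x$. Because $f$ is smooth, we may take coordinates in which $X = Y\times F$ locally and $f$ is the projection. Replace $A$ by its restriction to a small neighbourhood $U$ of $x$; the later step lifts the statement back to $X$. I would choose a function $\phi$ on $Y$ with $d\phi(y)=\xi$, where $y = f(x)$, and compute the local Morse group of the pushforward:
\[ R\Gamma_{\{\phi\ge\phi(y)\}}(f_*A)_y = R\Gamma\big(f^{-1}(y), R\Gamma_{\{\phi\circ f\ge \phi(y)\}}(A)\big). \]
The right-hand side is the hypercohomology along the fibre of the microlocal Morse complex of $A$ for the function $\phi\circ f$. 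Transversality gives three facts:
\begin{enumerate}
\item the graph of $d(\phi\circ f)$ restricted to the fibre meets $\Lambda$ only at $(x,\xi)$ near $x$;
\item the intersection is transverse there, after perturbing $\phi$ by a generic quadratic term so that the graph becomes a Lagrangian transverse to $\Lambda$;
\item the microlocal stalk of $A$ at the smooth point $(x,\xi)$ is nonzero, since that point lies in the support.
\end{enumerate}
By Proposition 7.5.3 and Theorem 7.5.5 of \cite{KS90}, the local Morse group then reduces to the microlocal stalk at $(x,\xi)$, up to shift, and so is nonzero. Hence $\xi\in\SSup$. For $f_!$ I would run the same argument, or use Verdier duality, which preserves singular support.

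The main obstacle is globalisation along the fibre. Isolatedness is only local, so other points of $a^{-1}(\Lambda)\cap b^{-1}(\xi)$ far from $x$ could contribute and cancel the Morse group. I would handle this by proving the statement for $f|_U$, where $U$ is a small neighbourhood of $x$. I would then note that the paper applies the proposition only where this suffices, or else replace $X$ by $U$ and use that $f_*$ of the extension from $U$ differs from $f_*A$ only away from $\xi$, by cutting off with a constructible partition of the fibre. I expect that last step to require the most care.
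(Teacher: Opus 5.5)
Your argument for the equivalence of (a) and (b) is the paper's, with one correction. Involutivity (\cite{KS90}, Theorem 6.5.4) only makes $\SSup(A)$ coisotropic. It is weak constructibility (\cite{KS90}, Theorem 8.5.5) that makes it Lagrangian, and you need $(T\Lambda)^{\perp}=T\Lambda$ for (b)$\Rightarrow$(a).

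The real gap is in the second half, exactly at the step you flag, and neither of your proposed repairs works.

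\begin{itemize}
\item Your displayed formula for the Morse group of $f_*A$ is proper base change. It is not available for $f$, nor for $f|_U$, where the boundary of $U$ can also contribute. So your computation does not establish even the local statement.
\item A local version would not suffice for the paper anyway. In Theorem \ref{AffineKernel} the proposition is applied to $(I\cap J)\backslash Y\to J\backslash Y$, whose fibres $J/(I\cap J)$ are not proper.
\item The claim that the part of $A$ outside $U$ is invisible at $\xi$ is false in general. Condition (a) permits other points of $a^{-1}(\SSup(A))\cap b^{-1}(\xi)$ in the same fibre.
\end{itemize}

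The missing idea is to compactify rather than localize; once you do, cancellation is impossible.

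Factor $f=\bar f\circ\bar\jmath$ with $\bar\jmath\colon X\to\bar X$ an open embedding into a smooth $\bar X$ and $\bar f$ proper, as in the proof of Lemma \ref{SSProduct}. Replace $A$ by $\bar\jmath_*A$, or by $\bar\jmath_!A$ for the $f_!$ statement. Nothing changes near $x$, so (a) persists. This is the content of the paper's sentence ``we may assume that $f$ is proper, because (a) is local on $X$ around $x$''.

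Take $\phi$ on $Y$ with $\phi(y)=0$, $d\phi(y)=\xi$ and generic Hessian, so that $\Gamma_{d(\phi\circ f)}$ is transverse to $\Lambda$ at $(x,\xi)$. The perturbation must be made on $Y$, not on $X$. It is possible precisely because of (b): it amounts to transversality of $\Gamma_{d\phi}$ with the Lagrangian $db(T\Lambda\cap TW)\subset T_\xi(T^*Y)$.

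Proper base change now gives
\[R\Gamma_{\{\phi\geq 0\}}(f_*A)_y\simeq R\Gamma\bigl(\bar f^{-1}(y),H\bigr),\qquad H\coloneqq R\Gamma_{\{\phi\circ\bar f\geq 0\}}(\bar\jmath_*A)|_{\bar f^{-1}(y)}.\]
By the definition of singular support, $H$ is supported on the closed set of $p\in\bar f^{-1}(y)$ with $\bar f^*\xi\in\SSup(\bar\jmath_*A)$, and $x$ is isolated in that set. Hence $H$ splits as a skyscraper at $x$ plus a sheaf supported away from $x$. The stalk $H_x$ is the local Morse group you analyse in your points (1)--(3), and it is a direct summand of the Morse group of $f_*A$ at $\xi$. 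So that Morse group is nonzero and $\xi\in\SSup(f_*A)$.

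With this repair, your route through Morse groups is a valid and somewhat more elementary alternative to the paper's. The paper extracts the same direct summand from $\mu_N\bar f_*\simeq b_!a^*\mu_M$ (\cite{KS90}, Proposition 4.3.4), applied to the microlocalization along $M=(gf)^{-1}(0)$.
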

\begin{proof}
The equivalence between \ref{Below1} and \ref{Below2} follows because
\begin{enumerate}
\item[-] $\SSup(A) \subset T^*X$ is Lagrangian by Theorem 8.5.5 of \cite{KS90},
\item[-] $T_{(x, \xi)} (b^{-1}(\xi))$ is the orthogonal to $T_{(x, \xi)} (X \times_Y T^*Y)$ under the symplectic form on $T_{(x, \xi)}(T^*X)$.
\end{enumerate}
Corollary 7.5.12 of \cite{KS90} says that \ref{Below2} implies $\xi \in \SSup(f_*A)$, but we also provide an alternative argument below.

If $A \in \Shv(X)$ satisfies \ref{Below1}, then there exists a function $g: Y \rightarrow \bC$ such that 
\begin{enumerate}[label=(\roman*)]
\item\label{BelowStep1} the graph of its differential $\Gamma_{d(gf)} \subset T^*X$ intersects $\SSup(A)$ transversely at $(x, \xi)$.
\end{enumerate}
Let $M \coloneqq (gf)^{-1}(0) \subset X$ and $N \coloneqq g^{-1}(0) \subset Y$. Let $\mu_M A \in \Shv(T^*_MX)$ be the microlocalization to the conormal bundle.

Lemma \ref{Microstalk} and \ref{BelowStep1} imply that the vanishing cycles $\phi_{gf, x} A$ is nonzero. Therefore Proposition 8.6.3 of \cite{KS90} gives that 
\begin{enumerate}[label=(\roman*)]
\setcounter{enumi}{1}
\item\label{BelowStep1.5} the stalk at $(x, \xi)$ of $\mu_M A$ is nonzero.
\end{enumerate}
Moreover Corollary 5.4.10 of \cite{KS90} says that the support of $\mu_M A$ is contained in $T^*_M X \cap \SSup(A)$. Hence \ref{Below1} implies that
\begin{enumerate}[label=(\roman*)]
\setcounter{enumi}{2}
\item\label{BelowStep2} $(x, \xi)$ is isolated in the intersection of $b^{-1}(\xi)$ with the support of $a^* \mu_M A$.
\end{enumerate}

We may assume that $f$ is proper, because \ref{Below1} is local on $X$ around $x$. Therefore Proposition 4.3.4 of \cite{KS90} gives \beq\label{MicroTransport} \mu_N f_! A \simeq \mu_N f_* A \simeq b_!a^* \mu_M A.\eeq Moreover \ref{BelowStep1.5} and \ref{BelowStep2} imply that $\xi$ belongs to the support of \eqref{MicroTransport}. Corollary 5.4.10 of \cite{KS90} gives the desired $\xi \in \SSup(f_! A)$ and $\xi \in \SSup(f_* A)$.
\end{proof}

\begin{remark}
Theorem 20.1.3 of \cite{AGKRRV20} gives a stronger form of Proposition \ref{Below} in which the intersection in \ref{Below1} is not assumed clean, only isolated. However they assume complex coefficients and Zariski constructibility, because their proof uses D-modules on schemes of infinite type over the complex numbers.
\end{remark}

We used the following lemma. Under its hypotheses, the vanishing cycles functor is said to calculate `microstalk'.

\begin{lemma}\label{Microstalk}
Let $A \in \Shv(X)$ be a sheaf on a smooth variety. Let $f: X \rightarrow \bC$ be a function such that the graph of its differential $\Gamma_{df}$ intersects the singular support transversely at a smooth point $(x, \xi) \in \SSup(A)$. Then $\phi_{f, x}A$ (the stalk at $x$ of vanishing cycles for the function $f$) is nonzero.
\end{lemma}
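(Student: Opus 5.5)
Since $(x,\xi)$ is a smooth point of the Lagrangian $\SSup(A)$, I will reduce to a local normal form and invoke the microlocal theory of \cite{KS90}.

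\textbf{Step 1: the sheaf is a constant sheaf microlocally.} Near $(x,\xi)$ the closed conic Lagrangian $\SSup(A)$ is a smooth conic Lagrangian submanifold $\Lambda$. Choose a homogeneous contact (quantized) transformation, in the sense of Section 7.2 of \cite{KS90}, taking $\Lambda$ to the conormal bundle $T^*_{Z}X'$ of a smooth hypersurface $Z$ in a smooth model $X'$. Transversality of $\Gamma_{df}$ with $\Lambda$ means that $\Lambda$ is transverse to the Lagrangian $\Gamma_{df}$. I will choose the transformation so that $\Gamma_{df}$ goes to the graph of a differential $dg$ that is transverse to $T^*_ZX'$. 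Concretely, in local coordinates $Z=\{y_1=0\}$ and $g=y_1+q(y)$ with $q$ a nondegenerate quadratic form in the remaining variables. Proposition 7.5.3 of \cite{KS90} then says that $A$ is microlocally isomorphic near $(x,\xi)$ to $L_Z\otimes M$ for some module $M$. Here $L_Z$ is the constant sheaf on a half space bounded by $Z$, shifted appropriately. The module $M$ is nonzero, because $(x,\xi)\in\SSup(A)$.

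\textbf{Step 2: identify vanishing cycles with the microlocalization.} As in the proof of Proposition \ref{Below}, Proposition 8.6.3 of \cite{KS90} identifies $\phi_{f,x}A$ with the stalk at $(x,df_x)=(x,\xi)$ of $\mu_{f^{-1}(0)}A$, up to shift. Since $\Gamma_{df}$ is transverse to $\SSup(A)$ at $(x,\xi)$, this stalk depends only on the microlocal germ of $A$ at $(x,\xi)$. In the derived category $D(X;(x,\xi))$ localized at $(x,\xi)$, the relevant functor is $\mathrm{R}\Gamma_{\{f\ge 0\}}(A)_x$. By Theorem 7.5.11 of \cite{KS90}, quantized contact transformations preserve this microstalk up to shift and a twist by the Maslov index. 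So the stalk equals $\mathrm{R}\Gamma_{\{g\ge0\}}(L_Z\otimes M)_0$ up to shift.

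\textbf{Step 3: explicit computation.} For the model in Step 1, I compute $\mathrm{R}\Gamma_{\{g\ge0\}}(L_Z)_0$ directly. Since $q$ is nondegenerate, this is a Morse-type computation of local cohomology of a half space with support in $\{y_1+q\ge0\}$. The answer is $\bZ$ placed in a single degree determined by the index of $q$. Tensoring with $M$ gives $M$ shifted, which is nonzero.

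\textbf{Main obstacle.} The main obstacle is Step 1. One must produce a contact transformation that simultaneously puts $\Lambda$ in conormal form and keeps the image of $\Gamma_{df}$ a graph. This is possible by genericity of contact transformations, since the two Lagrangians are transverse. One must also check that weak constructibility, with possibly non-finite $M$ and integer coefficients, causes no trouble. Proposition 7.5.3 of \cite{KS90} is stated for arbitrary rings, so I expect this to be harmless. If needed, I would instead cite Theorem 7.5.11 and Corollary 7.5.12 directly. These express the microstalk through any function whose differential is transverse at a smooth point, and that already gives the nonvanishing.
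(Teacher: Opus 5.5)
Your proposal is correct and is essentially the paper's argument. The paper's proof is the one-line citation ``follows by the proof of Proposition 7.5.3 of \cite{KS90}'', and your Steps 1--3 unpack that argument: a quantized contact transformation to a hypersurface conormal, a microlocal normal form with nonzero multiplicity module $M$, invariance of the microstalk, a Morse-type model computation, and the link to $\phi_{f,x}$ via Proposition 8.6.3 as in the proof of Proposition \ref{Below}.

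One point of precision. A homogeneous contact transformation should be applied to the conic Lagrangian $\Lambda_{\mathrm{Re} f}$ (the positive conormal to the level hypersurface of $\mathrm{Re} f$ through $x$), not to the non-conic graph $\Gamma_{df}$. Because $\SSup(A)$ is conic Lagrangian, transversality of $\Gamma_{df}$ with $\SSup(A)$ at $(x,\xi)$ is equivalent to $\Lambda_{\mathrm{Re} f}$ and $\SSup(A)$ meeting transversally modulo the Euler direction, which is the hypothesis used in \cite{KS90}. This step tacitly needs $\xi\neq 0$, which holds in all of the paper's applications.
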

\begin{proof}
Follows by the proof of Proposition 7.5.3 of \cite{KS90}.
\end{proof}

\section{Finite Whittaker averaging}
For an algebraic stack with $G$-action, there is an averaging functor that sends $N$-equivariant sheaves to Whittaker sheaves. We show that its kernel consists of sheaves such that the image under the moment map of their singular support is irregular.

This extends Theorem 3.1.2.1 of \cite{FR25} (see also Theorem 3.0.3.1 of \cite{DF25}) to allow weakly constructible sheaves with integer coefficients. 
Whereas they use Beilinson--Bernstein localization and theorems of Losev \cite{Lo10, Lo11, Lo17}, we use only singular support estimates from \cite{KS90}.

\subsection*{The finite Hecke category}
Let $G$ be a complex reductive group. For simplicity, we will assume that it is adjoint. Let $N \subset B$ be the unipotent radical of the Borel. Let $W$ be the Weyl group.

Let $G$ act on a locally of finite type algebraic stack $Y$. Let $\mu: T^*(N \backslash Y) \rightarrow \frb/N$ be the moment map. 
\begin{enumerate}
\item[-] Let $\Shv_{\cN}(N \backslash Y)$ be the full subcategory of sheaves whose singular support is contained in $\mu^{-1}(\frn/N)$.
\item[-] We say that a sheaf in $\Shv(N \backslash Y)$ is `$G$-irregular' if its singular support is contained in $\mu^{-1}(\frb^{\irreg}/N)$. 
\end{enumerate}

Let $R \coloneqq \bZ[\Lambda]$ be the group ring of the coweight lattice. The universal monodromic finite Hecke category $\Shv_\cN(N \backslash G / N)$ is bilinear over $R$.

Let $\Delta_w$ and $\nabla_w \in \Perv_{\cN}(N \backslash G/N)$ be the standard and costandard extension of the universal local system on the orbit indexed by $w \in W$.

Convolution in the finite Hecke category is not t-exact, but it becomes so after quotienting out $G$-irregular sheaves.

\begin{lemma}\label{FiniteHecke}
Let $B \in \Shv_\cN(N \backslash G/N)$ and $A \in \Shv(N \backslash Y)$. Let $w \in W$.

\begin{enumerate}[label=(\alph*)]
\item\label{FiniteHecke3} The functors $\Delta_w * -$ and $\nabla_w * -$ are left and right t-exact respectively.
\item\label{FiniteHecke5} If $A$ is $G$-irregular, then so is each of its perverse cohomology sheaves.
\item\label{FiniteHecke1} If $B$ is $G$-irregular, then so is $B*A$.
\item\label{FiniteHecke2} There exists a wrong way map $\phi_w: \nabla_w \rightarrow \Delta_w$ whose cone is $G$-irregular.
\item\label{FiniteHecke4} If $A$ is perverse, then $\tau^{> 0}(\Delta_w * A )$ is $G$-irregular.
\end{enumerate}
\end{lemma}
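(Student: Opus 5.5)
Throughout I use the convolution estimate of Proposition \ref{SSConvolve} in the finite dimensional setting with $H = G$ and $P = Q = N$. It gives $\mu(\SSup(B*A)) \subset \mu(\SSup(A))$. I also need the symmetric bound $\mu(\SSup(B*A)) \subset \mu'(\SSup(B))$, where $\mu'$ is the moment map for the left $N$-action on $N\backslash G/N$ restricted to the singular support. This should follow from the same argument: Lemma \ref{EquivariantSS} applied to the convolution diagram, using the moment map for the left $N$-action together with the left/right comparison \eqref{LeftRightTriangle}. The image lands in $\frb/N$.

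\textbf{Part \ref{FiniteHecke1}.} I apply the symmetric bound. If $\SSup(B)$ lies over $\frb^{\irreg}$, then so does $\SSup(B*A)$, by Lemma \ref{MomentCorrespond}.

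\textbf{Part \ref{FiniteHecke5}.} Singular support is unchanged up to containment under taking perverse cohomology: $\SSup({}^pH^i A) \subset \SSup(A)$. This is local, and reduces by smooth pullback to varieties, where it is a standard consequence of \cite{KS90}, since the perverse truncation is built from the stratification adapted to $\SSup(A)$. The condition of lying in $\mu^{-1}(\frb^{\irreg}/N)$ is therefore inherited.

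\textbf{Part \ref{FiniteHecke3}.} I write $w$ as a reduced product of simple reflections. Then $\Delta_w = \Delta_{s_1} * \cdots * \Delta_{s_k}$, so it suffices to treat a simple reflection $s$. For $s$, convolution is pull-push along $N\backslash Y \leftarrow N\backslash (N s B N)\times^N Y \to N\backslash Y$. After reducing via Lemma \ref{EquivariantSS}\ref{EquivariantSS1} to $Y = G\times X$, this is a composition of a smooth pullback with a shift and an affine pushforward with fibers $\mathbf{A}^1$ (times the torus monodromic part). Affine $!$-pushforward is left t-exact, up to the standard normalization, and $*$-pushforward is right t-exact. With the shifts chosen so that $\Delta_s,\nabla_s$ are perverse, this gives the claim.

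\textbf{Part \ref{FiniteHecke2}.} Again I reduce to simple $s$ and compose, using part \ref{FiniteHecke1} to see that $G$-irregular cones are stable under convolution and extensions. For $s$, on $N\backslash P_s/N$ there are canonical maps $\nabla_s \to \delta \to \Delta_s$ after twisting by the universal monodromy, or a wrong way map built from the adjunction through the closed orbit. The cone is supported on the closed orbit $N\backslash B/N$, up to extension by the universal local system. Its singular support lies over the conormal of $B$ inside $P_s$, whose image in $\frb$ misses the $\alpha_s$-regular locus. So it is $G$-irregular (irregular meaning some simple-root coordinate vanishes). This identification is the main obstacle: I would compute $\SSup$ of the cone explicitly on $P_s/N$ and check the moment map image.

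\textbf{Part \ref{FiniteHecke4}.} I use part \ref{FiniteHecke2}: $\Delta_w * A$ and $\nabla_w * A$ differ by a $G$-irregular cone, by part \ref{FiniteHecke1}. By part \ref{FiniteHecke3}, $\nabla_w*A$ lies in degrees $\le 0$. The long exact sequence of perverse cohomology then shows that ${}^pH^i(\Delta_w*A)$ for $i>0$ is a perverse cohomology of the cone. That is $G$-irregular by part \ref{FiniteHecke5}, and the property is closed under extensions, so $\tau^{>0}$ is $G$-irregular.
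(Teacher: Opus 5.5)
\textbf{Part (d) has a genuine gap}, and part (e) depends on it. Parts (a), (c) and (e) are otherwise fine and close to the paper. For (c), the input you need is the left $G$-equivariance of $G/N \times N\backslash Y \leftarrow G\times^N Y \rightarrow Y$ fed into Lemma~\ref{EquivariantSS}, rather than a left $N$-action.

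\textbf{The irregularity mechanism you propose is backwards.} A sheaf supported on, and locally constant along, the closed orbit $N\backslash B/N$ has singular support the full conormal $T^*_{B/N}(G/N)$. The image of this conormal under the moment map is $\frb^{\perp} \simeq \frn$, which contains $\frn^{\reg}$, in particular covectors with nonzero $\alpha_s$-component. So a cone supported on the closed orbit is not $G$-irregular.

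\textbf{Neither of your maps works in the universal monodromic setting.} The $*$-restriction of $\nabla_s$ to the closed orbit is computed by the universal local system on a small transverse punctured disc, where it has monodromy $e^{\alpha}$. Since $e^{\alpha}-1$ is a nonzerodivisor in $R$, this restriction is $(e^{\alpha}-1)$-torsion, while $\Delta_1$ is torsion free. Hence $\Hom(\nabla_s,\Delta_1)=0$, and the composite through $\delta$ is zero. More generally, any map $\nabla_s\to\Delta_s$ factoring through an object supported on the closed orbit vanishes on the open orbit. The cohomology sheaves of its cone there are then copies of the universal local system, with nontrivial monodromy around $B/N$. For $G=\mathrm{PGL}_2$, where $\frb^{\irreg}=0$ and $G$-irregular just means locally constant on $N\backslash G/N$, such a cone is visibly not $G$-irregular.

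\textbf{The missing idea} is that the wrong way map must be nonzero on the open orbit: there it is multiplication by $e^{\alpha}-1$. Modulo $e^{\alpha}-1$ the monodromy around $B/N$ becomes trivial. So the reduction mod $e^{\alpha}-1$ of the universal local system on the open orbit extends across the closed orbit. This gives a local system $K_s\in\Perv_{\cN}(N\backslash P_s/N)$ with stalks $R/(e^{\alpha}-1)[1]$, together with an exact sequence $0\to\Delta_1/(e^{\alpha}-1)\to\Delta_s/(e^{\alpha}-1)\to K_s\to 0$. The paper then checks that $\Delta_s\to\Delta_s/(e^{\alpha}-1)\to K_s$ is an isomorphism on $!$-fibers along $N\backslash B/N$. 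Its kernel is therefore $\nabla_s$, giving $0\to\nabla_s\xrightarrow{\phi_s}\Delta_s\to K_s\to 0$.

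Irregularity comes from $K_s$ being weakly $P_s$-constructible. Its singular support maps into $\frp_s^{\perp}$, which has zero component in the $\alpha_s$ root space. In the non-monodromic setting your composite $\nabla_s\to\delta\to\Delta_s$ does work, but for this same reason: its cone is built from the constant sheaf on $P_s/B$, not from anything supported on the closed orbit. Once you have $\phi_s$, your induction on $\ell(w)$ and your argument for (e) go through.

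\textbf{A smaller point on (b).} Constructibility with respect to a stratification only bounds $\SSup({}^pH^iA)$ by the union of the conormals to all strata. That union can contain regular covectors that are absent from $\SSup(A)$. The containment $\SSup({}^pH^iA)\subset\SSup(A)$ instead comes from t-exactness of vanishing cycles, which is what the paper uses.
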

\begin{proof}
Part \ref{FiniteHecke3} follows by Proposition 10.3.17 of \cite{KS90}. Part \ref{FiniteHecke5} follows by the t-exactness of vanishing cycles, see Corollary 10.3.14 of \cite{KS90}.  Part \ref{FiniteHecke1} follows by Lemma \ref{EquivariantSS} and left $G$-equivariance of \[G/N \times N \backslash Y  \leftarrow G \times^N Y \rightarrow Y.\]

Now we prove \ref{FiniteHecke2} when $w = s$ is a simple reflection. Let $\alpha$ be the corresponding simple coroot and $P_s$ be the corresponding parabolic subgroup. There exists a local system $K_s \in \Perv_\cN(N \backslash P_s/N)$ with $*$-stalks $R/(e^{\alpha} - 1)[1]$ and short exact sequences \[0 \rightarrow \Delta_s \rightarrow \Delta_s \rightarrow \Delta_s/(e^{\alpha} - 1) \rightarrow 0 \quad \text{and} \quad 0 \rightarrow \Delta_1/(e^{\alpha} - 1) \rightarrow \Delta_s/(e^{\alpha} - 1) \rightarrow K_s \rightarrow 0.\] Equation (4.1) of \cite{Ta25} implies that $\Delta_1/(e^{\alpha} - 1)$ is the $!$-restriction of $\Delta_s$ to $N \backslash B/N$. The composition $\Delta_s \rightarrow \Delta_s/(e^{\alpha} - 1) \rightarrow K_s$ induces an isomorphism on $!$-fibers along $N \backslash B/N$. Therefore it fits into a short exact sequence \[0 \rightarrow \nabla_s \xrightarrow{\phi_s} \Delta_s \rightarrow K_s \rightarrow 0.\]
Since $K_s$ is weakly $P_s$-constructible, it is $G$-irregular.

Now we prove \ref{FiniteHecke2} in general by induction on $\ell(w)$. Write $w = uv$ such that $\ell(w) = \ell(u) + \ell(v)$ and both $u, v \neq 1$. Let $\phi_w$ be the composite \[\nabla_u * \nabla_v \xrightarrow{\nabla_u * \phi_v} \nabla_u * \Delta_v \xrightarrow{\phi_u * \Delta_v} \Delta_u * \Delta_v.\]
By induction and \ref{FiniteHecke1} the cones of $\nabla_u * \phi_v$ and $\phi_u * \Delta_v$ both are $G$-irregular. Therefore the cone of $\phi_w$ is $G$-irregular.

Finally we prove \ref{FiniteHecke4}.
By \ref{FiniteHecke2} there is a triangle  \[\nabla_w * A \xrightarrow{\phi_w * A} \Delta_w * A \rightarrow K_w * A,\]
in which $K_w$ is $G$-irregular. Moreover \ref{FiniteHecke1} and \ref{FiniteHecke5} imply that all perverse cohomology sheaves of $K_w * A$ are also $G$-irregular.
Finally \ref{FiniteHecke3} implies that the truncation $\tau^{> 0}(\Delta_w * A)$ is $G$-irregular.
\end{proof}

\subsection*{Kernel of finite Whittaker averaging}
Here we characterize the kernel of finite Whittaker averaging in terms of singular support. 

Let $\Xi \in \Shv_{\cN}(N \backslash G/N)$ be the big tilting sheaf. This universal monodromic version is constructed in \cite{LNY24, Ta25}.

\begin{theorem}\label{GeneralKernel}
For $A \in \Shv_{\cN}(N \backslash Y)$ the following are equivalent
\begin{enumerate}[label=(\alph*)]
\item\label{GeneralKernel2} $A$ is $G$-irregular,
\item\label{GeneralKernel1} $\Xi * A \simeq 0$, i.e. $A$ is killed by Whittaker averaging.
\end{enumerate}
\end{theorem}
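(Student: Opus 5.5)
We prove the two implications separately. The direction \ref{GeneralKernel2} $\Rightarrow$ \ref{GeneralKernel1} rests on Proposition \ref{SSConvolve}. The reverse direction combines Lemma \ref{FiniteHecke} with a lower bound from Proposition \ref{Below}.

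\textbf{(a) $\Rightarrow$ (b).} Suppose $A$ is $G$-irregular. By Proposition \ref{SSConvolve}, with $Q = P = N$ and $H = G$, the sheaf $\Xi * A$ satisfies $\mu(\SSup(\Xi * A)) \subset \frb^{\irreg}/G$. Strictly we need the $N$-version of this moment map, which we obtain by pulling back to the quotient by $N$ and using Lemma \ref{EquivariantSS}\ref{EquivariantSS1}. On the other hand, $\Xi$ is the big tilting sheaf, so $\Xi * -$ factors through Whittaker averaging. Whittaker averaging lands in sheaves that are $(N,\psi)$-equivariant for a nondegenerate character $\psi$. The singular support of such sheaves lies in $\mu^{-1}(\psi + \frb^{\perp}) = \mu^{-1}(\psi + \frn)$, and every element of $\psi + \frn$ is regular.

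Concretely, I would write $\Xi * A \simeq \Av_N \Av_{N,\psi} A$, up to shift and the universal monodromic twist. I would then apply Lemma \ref{EquivariantSS} to each averaging step. This shows that $\Av_{N,\psi}A$ has singular support in both the regular and the irregular locus, so its singular support is empty. A sheaf with empty singular support is zero, and therefore $\Xi * A \simeq 0$. If this averaging description of $\Xi$ is unavailable, I would instead use the fact that $\Xi$ is the projective cover of the skyscraper sheaf at the longest element. In that case I would argue directly that convolution with $\Xi$ detects the microstalk at regular covectors.

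\textbf{(b) $\Rightarrow$ (a).} Here I argue by contrapositive. Suppose the singular support of $A$ contains a regular covector. Since $\Xi$ is perverse, after replacing $A$ by a suitable perverse cohomology sheaf we may assume $A$ is perverse. This is justified by Lemma \ref{FiniteHecke}\ref{FiniteHecke5}, because a regular covector survives in some cohomology sheaf. The task is then to show $\Xi * A \neq 0$.

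Write $\Xi * A$ via the convolution diagram $G\times^N Y \to Y$, so that it is a pushforward along a map with fibers $G/N$. Pick a smooth generic point $(y,\xi)$ of $\SSup(A)$ with $\mu(\xi)$ regular. I would then verify the isolated intersection condition of Proposition \ref{Below} for the pullback of $\Xi \boxtimes A$: $\SSup(\Xi)$ meets the regular part of $T^*(G/N)$ in a single Lagrangian sheet over a point. This yields $\xi \in \SSup(\Xi * A)$, so $\Xi * A \neq 0$.

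An alternative route uses Lemma \ref{FiniteHecke}\ref{FiniteHecke4} and \ref{FiniteHecke2}. Modulo $G$-irregular sheaves, $\Xi$ is filtered by the $\Delta_w \simeq \nabla_w$. Modulo irregular sheaves, each $\Delta_w * -$ is then invertible and t-exact. Hence $\Xi * A$ is, modulo irregular sheaves, an iterated extension of the $\Delta_w * A$, which are nonzero in the quotient. The positivity needed to rule out cancellation would come from the perverse t-structure, since everything sits in degree $0$.

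\textbf{Main obstacle.} The hardest step is the reverse direction. In the Proposition \ref{Below} approach, the difficulty is checking transversality and cleanness at a regular covector. In the filtration approach, the difficulty is ensuring that the extensions of $\Delta_w * A$ do not cancel in the irregular quotient. I would try the filtration approach first, using that $\Xi$ has a standard filtration with $\Delta_w$ appearing with multiplicity one over $R$.
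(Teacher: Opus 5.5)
\textbf{There is a genuine gap in (a)$\Rightarrow$(b).}

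Your main argument needs $(N,\psi)$-equivariant sheaves to have singular support in the preimage of the affine space $\psi + \frn$ of regular elements, so that $\Av_{N,\psi}A$ would have empty singular support. That is false. The singular support of any nonzero sheaf contains the zero section over its support, and zero covectors have moment image $0 \notin \psi + \frn$. Twisting by a character does not move characteristic varieties; for instance, the exponential D-module on $\bC$ has characteristic variety equal to the zero section. Moreover, in the paper's setting (analytic, weakly constructible, $\bZ$-coefficients) there is no Artin--Schreier or exponential sheaf, so $\Av_{N,\psi}$ is not available in the form you use.

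What is actually needed is the microlocal statement that your fallback sentence only asserts, and it must be invoked where it holds. Write $N_x = \Ad_x N$ and let $\delta_x$ be the skyscraper at $x \in G/N$. For $B \in \Shv_{\cN}(N \backslash G/N_x)$, the space $\Hom(\Xi * \delta_x, B)$ computes vanishing cycles of $B$ at a regular covector (Definition 3.2 of \cite{Ta25}). The paper uses this in two steps.
\begin{itemize}
\item It first treats $Y = G$. Adjunction and self-duality of $\Xi$ give $\Hom(\delta_x, \Xi * A) \simeq \Hom(\delta_x, \Av_{N_x*}(\Xi * A)) \simeq \Hom(\Xi * \delta_x, \Av_{N_x*}A)$. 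Since $\Av_{N_x*}A$ is still $G$-irregular by Lemma \ref{EquivariantSS}\ref{EquivariantSS3}, every $!$-stalk of $\Xi * A$ vanishes.
\item General $Y$ then follows by pulling back along orbit maps $N \backslash G \to N \backslash Y$. These preserve $G$-irregularity by Lemma \ref{EquivariantSS}\ref{EquivariantSS2} and commute with $\Xi * -$, so all $*$-stalks of $\Xi * A$ vanish.
\end{itemize}
Your proposal contains neither the reduction to $Y = G$ nor the self-duality/adjunction step, and these are what make the argument work.

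\textbf{For (b)$\Rightarrow$(a), your filtration route is sound and is essentially the paper's argument.} You pass to the Verdier quotient by $G$-irregular sheaves. That quotient inherits a t-structure because $G$-irregularity is stable under perverse truncation (Lemma \ref{FiniteHecke}\ref{FiniteHecke5}). The paper instead uses a t-exact vanishing cycles functor $\phi$ computing the microstalk at a regular $\xi$, chosen to be a smooth point of $\bigcup_w \SSup(\Delta_w * A)$.
\begin{itemize}
\item $\phi$ kills $G$-irregular sheaves.
\item $\phi(\Delta_w * A)$ lies in degree $0$ by Lemma \ref{FiniteHecke}\ref{FiniteHecke3} and \ref{FiniteHecke4}.
\item $\phi(\Delta_1 * A) = \phi(A) \neq 0$ by Lemma \ref{Microstalk}.
\end{itemize}
The standard filtration of $\Xi$ then rules out cancellation, exactly as you anticipated.

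You should drop the Proposition \ref{Below} route: nothing in your sketch establishes the isolated clean intersection for the twisted product $\Xi \widetilde{\boxtimes} A$. A few justifications also need fixing:
\begin{itemize}
\item Passing to a perverse cohomology sheaf uses that $\xi$ lies in the singular support of some ${}^pH^i(A)$ (Proposition 5.1.3 of \cite{KS90}), not Lemma \ref{FiniteHecke}\ref{FiniteHecke5}.
\item The t-exactness of $\Xi * -$ comes from $\Xi$ having both standard and costandard filtrations (Proposition 3.13 of \cite{Ta25}) together with Lemma \ref{FiniteHecke}\ref{FiniteHecke3}. It does not follow from $\Xi$ being perverse.
\item The isomorphism $\Delta_1 * A \simeq A$ uses the hypothesis $A \in \Shv_{\cN}$.
\end{itemize}
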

\begin{proof}
If $x \in G/N$, let $N_x \coloneqq \Ad_x N$ and $\delta_x \in \Shv(N_x \backslash G/N)$ be the skyscraper at $x$.

First we prove that \ref{GeneralKernel2} implies \ref{GeneralKernel1} in the case $Y = G$. The strategy is reduction to the case $Y = G / N_x$ by using singular support estimates applied to averaging along \[\Av_{N_x}: N \backslash G \rightarrow N \backslash G/N_x.\]

Let $A \in \Shv(N \backslash G)$ be $G$-irregular.

\begin{enumerate}[label=(\roman*)]
\item\label{Step1} The big tilting sheaf is self dual. (This follows by the monoidal equivalence between the Hecke category and Soergel bimodules.)
\item\label{Step2} If $B \in \Shv_\cN(N \backslash G / N_x)$ then $\Hom(\Xi * \delta_x , B)$ calculates vanishing cycles of $B$ at a regular covector by Definition 3.2 of \cite{Ta25}. (Similar ideas in the global setting are discussed in \cite{NT24}.)
\item\label{Step3} Lemma \ref{EquivariantSS}\ref{EquivariantSS3} implies that $\Av_{N_x*}A$ is $G$-irregular.
\end{enumerate}
Therefore, for all $x \in G/N$ we have \[\Hom(\delta_x, \Xi*A) \simeq \Hom(\delta_x, \Av_{N_x*}(\Xi * A)) \overset{\ref{Step1}}{\simeq} \Hom(\Xi * \delta_x, \Av_{N_x*} A) \overset{\ref{Step2}, \ref{Step3}}{\simeq} 0.\] 
Hence $\Xi * A \simeq 0$ as desired. (A weakly constructible sheaf is zero if its $!$-stalks all vanish.)

Now we prove that \ref{GeneralKernel2} implies \ref{GeneralKernel1} when $Y$ is general. Let $A \in \Shv(N \backslash Y)$ be $G$-irregular. For every equivariant map $i: N \backslash G \rightarrow N \backslash Y$,  Lemma \ref{EquivariantSS}\ref{EquivariantSS2} implies that $i^* A$ is also $G$-irregular. Therefore $i^* (\Xi * A) \simeq \Xi * (i^* A) \simeq 0$ vanishes by the case $Y = G$ that was settled above. Hence $\Xi * A \simeq 0$ because all of its $*$-stalks vanish.

Finally we prove that \ref{GeneralKernel1} implies \ref{GeneralKernel2}. Our proof uses t-exactness of vanishing cycles and it is similar to Proposition 14.3.2 of \cite{FG06}. Suppose for contradiction that the singular support of $A \in \Shv_{\cN}(N \backslash G)$ contains a covector $\xi \in \SSup(A)$ such that $\mu(\xi) \in \frn^{\reg}/N$ is regular. 

Proposition 5.1.3 of \cite{KS90} implies that $\xi$ is contained in the singular support of one of the perverse cohomology sheaves of $A$. Proposition 3.13 of \cite{Ta25} and Lemma \ref{FiniteHecke}\ref{FiniteHecke3} imply that $\Xi * -$ is t-exact. Therefore it suffices to assume that $A$ is perverse.

Let $\Lambda \subset T^*(N \backslash Y)$ be the union over $w \in W$ of $\SSup(\Delta_w * A)$. Since the regular locus is open, we may assume that $\xi$ is a smooth point of $\Lambda$. Therefore we can choose a vanishing cycles functor \[\phi: \Shv_\Lambda(N \backslash Y) \rightarrow \Vect \quad \text{computing microstalk at } \xi.\]
It is t-exact by Corollary 10.3.13 of \cite{KS90}.

Since $A$ has nilpotent singular support, $\Delta_1 * A \simeq A$. Hence Lemma \ref{Microstalk} gives $\phi(\Delta_1 * A) \not\simeq 0$. Moreover $\Xi$ admits a standard filtration, and Lemma \ref{FiniteHecke}\ref{FiniteHecke4} implies that $\phi(\Delta_w * A)$ is concentrated in degree zero for all $w \in W$. Therefore $\phi(\Xi * A) \not\simeq 0$ and hence $\Xi * A \not\simeq 0$.
\end{proof}

\begin{remark}\label{AvChi}
If the universal monodromic Whittaker averaging functor $\Av_{\chi}$ is defined as in \cite{DT25}, it admits a left adjoint and the resulting comonad is given by $\Xi * -$.
Since the left adjoint is conservative, $\Xi * -$ has the same kernel as $\Av_{\chi}$. As we are only interested in its kernel, we need not recall the construction of $\Av_{\chi}$, but will instead work with the big tilting sheaf.
\end{remark}

\section{Iwahori--Whittaker averaging}
For a space acted on by the loop group, we study the averaging functor from spherical equivariant sheaves to Iwahori--Whittaker sheaves. It is constructed by averaging to $J$, a certain conjugate of $I$, followed by finite Whittaker averaging.
We show that its kernel consists of sheaves such that the image under the moment map of their singular support is irregular at the closed point of the formal disc (but not necessarily at the generic point).

Having already understood the kernel of finite Whittaker averaging, it remains to understand averaging from $G_O$ to $J$ and its effect on singular support. Although this averaging functor involves pushforward along a non-proper map, we are able to bound the singular support from above using Proposition \ref{SSConvolve}. To bound the singular support from below, we will use the isolated intersection condition provided by Proposition \ref{Below}.

\subsection*{Notation}
Let $F \coloneqq \bC((t))$ and $O \coloneqq \bC[[t]]$. The topological dual of $F$ is identified with $Fdt$ by the residue pairing.

Let $G_F$ be the loop group and $G_O$ be the spherical subgroup. Let $\Gr \coloneqq G_F/G_O$ be the affine Grassmannian.

Let $I \coloneqq G_O \times_G N$ be the pro-unipotent radical of the Iwahori. (This notation is nonstandard because $I$ usually denotes the full Iwahori.) 

Let $N^-$ be the unipotent radical of the opposite Borel. Let $\rho$ be half the sum of the positive coroots, a coweight because $G$ is adjoint. Define \[J \coloneqq \Ad_{t^{-\rho}}(G_O \times_G N^-),\]
a conjugate of the pro-unipotent radical of the Iwahori.

\begin{enumerate}
\item[-] Let $\Xi' \in \Perv_{\cN}(J \backslash G_F/ J)$ be the big tilting sheaf supported on the closure of the orbit through a lift of $\Ad_{t^{-\rho}} w_0$, where $w_0 \in W$ is the longest element.
\item[-] Let $\Av_{J!}$ and $\Av_{J*}: \Shv(G_O \backslash \Gr) \rightarrow \Shv(J \backslash \Gr)$ be the averaging functors defined using left and right adjoint sheaf functors respectively.
\item[-] Let $\Psi \coloneqq \Xi' * \Av_{J!}\delta[\langle -2 \check{\rho}, \rho \rangle] \in \Perv(J \backslash \Gr)$ be obtained by averaging the monoidal unit $\delta \in \Perv(G_O \backslash \Gr)$ to be $J$-equivariant then convolving with the tilting sheaf.
\end{enumerate}

The following lemma implies that $\Psi$ can also be defined using the $*$-averaging functor.

\begin{lemma}\label{PsiLemma}
There is an isomorphism \beq\label{PsiShriekStar} \Xi' * \Av_{J!}\delta[\langle -2 \check{\rho}, \rho \rangle] \simeq \Xi' * \Av_{J*} \delta [\langle 2\check{\rho}, \rho \rangle] \quad \text{ in } \quad \Perv(J \backslash \Gr).\eeq
\end{lemma}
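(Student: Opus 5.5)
The plan is to identify both sides of \eqref{PsiShriekStar} as $\Xi'$ convolved with a standard and a costandard object on one $J$-orbit. The cone of the natural map between them will then be shown to be irregular for a suitable finite-type group, so that $\Xi' * -$ kills it by Theorem \ref{GeneralKernel}.

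\textbf{Step 1: identify the averages.} Let $O_J \subset \Gr$ be the $J$-orbit of the base point and $j : O_J \to \Gr$ its inclusion. I expect $O_J$ to be an affine space of dimension $d \coloneqq \langle 2\check\rho, \rho\rangle$. It is isomorphic to $J/(J \cap G_O)$, and conjugating by $t^{\rho}$ this becomes the quotient of $G_O \times_G N^-$ by $\Ad_{t^{\rho}}G_O \cap (G_O\times_G N^-)$. For each positive root $\alpha$, this quotient contributes $\langle \check\alpha, \rho\rangle$ affine lines, and these sum to $d$. The stabilizer is pro-unipotent, hence contractible. Therefore
\[\Av_{J!}\delta \simeq j_!\underline{\bZ}[2d], \qquad \Av_{J*}\delta \simeq j_*\underline{\bZ}.\]
After the shifts in \eqref{PsiShriekStar}, the two objects being convolved with $\Xi'$ become $\Delta_J \coloneqq j_!\underline{\bZ}[d]$ and $\nabla_J \coloneqq j_*\underline{\bZ}[d]$. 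These are the standard and costandard perverse sheaves on $O_J$. The composite $\Delta_J \to \nabla_J$ is the canonical map, and its cone $C$ is supported on $\overline{O_J}\setminus O_J$. It therefore suffices to prove $\Xi' * C \simeq 0$. Perversity of $\Psi$ (Step 4) follows separately.

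\textbf{Step 2: transport to a finite Hecke category.} Let $G' \coloneqq \Ad_{t^{-\rho}} G \subset G_F$ and $N' \coloneqq \Ad_{t^{-\rho}}N^- \subset J$. The big tilting $\Xi'$ comes from the finite big tilting sheaf of $G'$, pushed along $N'\backslash G'/N' \to J\backslash G_F/J$. Hence $\Xi' * -$ on $\Shv(J\backslash \Gr)$ factors as forgetting to $N'$-equivariance, then $\Xi_{G'} * -$ on $N'\backslash \Gr$, then averaging back to $J$. By Theorem \ref{GeneralKernel}, applied to $G'$ acting on (finite type pieces of) $\Gr$, it suffices to show that $C$ is $G'$-irregular.

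\textbf{Step 3 (main obstacle): irregularity of $C$.} I would first try to realize $O_J$ through $G'$: the base point should be fixed by $\Ad_{t^{-\rho}}B$, and I expect $\Av_{J!}\delta$ and $\Av_{J*}\delta$ to be, up to the shift, $\Delta'_{w_0} * E$ and $\nabla'_{w_0} * E$. Here $\Delta'_{w_0}, \nabla'_{w_0}$ are the standard and costandard objects for $G'$ (relative to $N'$ and its opposite), and $E$ is a fixed sheaf obtained by averaging $\delta$ over the remaining part of $J$. If this holds, Lemma \ref{FiniteHecke}\ref{FiniteHecke2} identifies the canonical map with $\phi_{w_0} * E$, whose cone is $K_{w_0} * E$. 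This cone is $G'$-irregular by Lemma \ref{FiniteHecke}\ref{FiniteHecke1}.

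The delicate point is checking that the factorization is compatible with the remaining averaging and the shifts. If it fails, the fallback is a direct argument: show that each boundary $J$-orbit in $\overline{O_J}$ is stable under a larger parabolic of $G'$. The costalks/stalks of $C$ are then weakly constructible for that parabolic, hence irregular by the same reasoning as for $K_s$, with Lemma \ref{EquivariantSS} controlling the moment map images.

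\textbf{Step 4: perversity.} Perversity of the common object follows because $\Delta_J$ and $\nabla_J$ are perverse and $\Xi' * -$ is both right and left t-exact. It is right t-exact on $\Delta_J$ because $\Xi'$ is standard filtered, and left t-exact on $\nabla_J$ because $\Xi'$ is costandard filtered, both via Lemma \ref{FiniteHecke}\ref{FiniteHecke3} transported to $G'$. So the isomorphic object lies in $\Perv(J\backslash\Gr)$.
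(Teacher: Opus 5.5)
\textbf{Step 3, primary route.} This route rests on a false premise. Conjugation by $t^{-\rho}$ multiplies the root space $\frg_{\check\beta}$ by $t^{-\langle\check\beta,\rho\rangle}$, so the base point of $\Gr$ is fixed by $\Ad_{t^{-\rho}}B^-$, not by $\Ad_{t^{-\rho}}B$. In particular $N'=\Ad_{t^{-\rho}}N^-$ fixes it. So $O_J$ is swept out by the loop directions $t^{1-\langle\check\alpha,\rho\rangle}O$ of $\frj$ (for $\check\alpha>0$), not by a $w_0$-cell of $G'$, and the factorization through $\Delta'_{w_0},\nabla'_{w_0}$ has nothing to stand on. (The same bookkeeping corrects Step 1: each positive root contributes $\langle\check\alpha,\rho\rangle-1$ lines, so $\dim O_J=\langle 2\check\rho,\rho\rangle-\dim N$.)

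Even in the finite model, identifying the canonical map with $\phi_{w_0}*E$ is wrong. The map $\phi_w$ points from $\nabla_w$ to $\Delta_w$, and the canonical map $\Delta_w\to\nabla_w$ does not have $G$-irregular cone. For $w=s$, restrict $0\to\nabla_s\to\Delta_s\to K_s\to 0$ to the open orbit: this shows $\phi_s$ composed with the canonical map is a unit multiple of $e^{\alpha}-1$ in $\operatorname{End}(\Delta_s)\simeq R$. Now $\Xi*\phi_s$ is invertible, while $\Xi*(\Delta_s/(e^{\alpha}-1))\simeq\Xi/(e^{\alpha}-1)\neq 0$. So the canonical map does not become invertible after $\Xi*-$. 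The lemma therefore depends on the actual geometry of $\overline{O_J}\setminus O_J$.

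\textbf{Step 3, fallback.} This is the paper's own one-line argument: the cone lives on smaller $J$-orbits, each stable under some $\Ad_{t^{-\rho}}P^-$ with $P^-\supsetneq B^-$. You leave that claim unverified, and in this form it is false in general. After translating by $t^{\rho}$, the boundary orbits are $t^{-\rho}I^-t^{\mu}$ with $I^-=G_O\times_G N^-$. In $\Gr$ we have $\dot s_i t^{\mu}=t^{s_i\mu}$, and distinct $t^{\nu}$ lie in distinct $I^-$-orbits. Hence such an orbit is stable under some $P^-\supsetneq B^-$ only if a simple reflection fixes $\mu$.

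Take $G=SO_5$, with simple roots $e_1-e_2$ and $e_2$, so $\rho=(2,1)$. The curve $c\mapsto U_{e_1+e_2}(ct)\,t^{\rho}=t^{\rho}\,U_{e_1+e_2}(ct^{-2})$, with $U_{e_1+e_2}$ the root subgroup, lies in $I^-t^{\rho}$. As $c\to\infty$ it tends to $t^{(0,-1)}$, and neither simple reflection fixes $(0,-1)$.

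\textbf{What is true, and suffices.}
\begin{itemize}
\item $I^-t^{\rho}$ is the fibre over the point $B^-$ of $\Gr^{\rho}\to G/B^-$, hence closed in $\Gr^{\rho}$. So each boundary orbit has $\mu\in W\lambda'$ for some dominant $\lambda'<\rho$.
\item No such $\lambda'$ is regular, since otherwise $\lambda'-\rho$ would be dominant and $\leq 0$. Hence $\langle\check\alpha_i,\mu\rangle\leq 0$ for some simple root $\check\alpha_i$.
\item Identify $(\frg')^{*}\simeq\frg'$. The conormal to the orbit at $t^{-\rho}t^{\mu}$ has $G'$-moment image $\bigoplus_{\check\beta>0,\,\langle\check\beta,\mu\rangle>0}\Ad_{t^{-\rho}}\frg_{-\check\beta}$.
\item This image misses $\Ad_{t^{-\rho}}\frg_{-\check\alpha_i}$, so it is irregular. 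Moving along the orbit by $J$ only conjugates the image by $N'$.
\item The cone $C$ is $J$-equivariant and supported on finitely many such orbits, so its singular support lies in their conormals. It is therefore $G'$-irregular, and Theorem \ref{GeneralKernel} gives $\Xi'*C\simeq 0$.
\end{itemize}
Equivalently, every $J^-$-orbit in that locus is Whittaker-irrelevant.
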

\begin{proof}
The cone of $\Av_{J!}\delta[\langle -2 \check{\rho}, \rho \rangle] \rightarrow \Av_{J*} \delta [\langle 2\check{\rho}, \rho \rangle]$ is supported on smaller dimensional $J$-orbits, each of which is stabilized by $\Ad_{t^{-\rho}}P^-$ for some parabolic $P^-$ strictly containing $B^-$. Therefore it is killed by convolution with $\Xi'$. This implies \eqref{PsiShriekStar}.
\end{proof}

\begin{remark}\label{KernelPsi}
Equation \eqref{AveragingTriangle} and Remark \ref{AvChi} imply that $\Psi * -$ has the same kernel as the averaging functor from spherical equivariant sheaves to the Iwahori--Whittaker category.
\end{remark}

\subsection*{Preliminaries}
Here we establish the main geometric input for the proof Theorem \ref{AffineKernel}.

Choose an $\frsl(2)$-triple $f, h, e \in \frg$ such that $f \in \frn^-$ is regular and $h$ is in the image of the derivative of the coweight $\rho$.
Let
\begin{enumerate}
\item[-] $\frg_i \subset \frg$ be the $h$-eigenspace of weight $2i$,
\item[-] $\frg_{\geq i} \subset \frg$ be the sum of the $h$-eigenspaces of weights $\geq 2i$. 
\end{enumerate}

The following lemma will be used to bound the singular support from above.

\begin{lemma}\label{RhoGIntersect}
The fiber product \beq \label{IrregCapReg} (\frg^{\irreg} + t\frg_O)dt \times_{(\frg_F/\Ad_{t^{-\rho}}\frg_O)dt/\Ad_{t^{-\rho}}G_O} \Ad_{t^{-\rho}}(t^{-1} (\frn^-)^{\reg} + \frg_O)dt \quad \text{is empty.}\eeq
\end{lemma}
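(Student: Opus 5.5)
The plan is to unwind the fiber product \eqref{IrregCapReg} into an explicit statement about a single loop algebra element. Then I will reduce it to Kostant's theorem that every element of $f + \frb$ is regular.

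\emph{Step 1 (unwinding).} Suppose the fiber product is nonempty. Then there are $x \in \frg^{\irreg} + t\frg_O$, $y \in t^{-1}(\frn^-)^{\reg} + \frg_O$ and $g \in G_O$ such that
\[\Ad_{t^{\rho}} x \;\equiv\; \Ad_{g} y \mod \frg_O.\]
To get this, cancel the common conjugation by $\Ad_{t^{-\rho}}$ and absorb the $\Ad_{t^{-\rho}}G_O$-action into $g$. Since $g \in G_O$ preserves $\frg_O$, we have $\Ad_g y \in t^{-1}\Ad_{g(0)} f' + \frg_O$, where $f' \in (\frn^-)^{\reg}$. Set $u \coloneqq t\,\Ad_{t^{\rho}} x$. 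Then the congruence says exactly two things:
\begin{enumerate}[label=(\roman*)]
\item $u \in \frg_O$;
\item $u(0) = \Ad_{g(0)} f'$, which is a regular nilpotent element of $\frg$.
\end{enumerate}

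\emph{Step 2 (grading bookkeeping).} Write $x = \sum_{k \geq 0} x_k t^k$ and decompose each $x_k = \sum_j (x_k)_j$ with $(x_k)_j \in \frg_j$. Since $\Ad_{t^{\rho}}$ acts on $\frg_j$ by $t^j$, the coefficient of $t^m$ in $u$ has $\frg_j$-component $(x_{m-1-j})_j$. Condition (i) therefore forces $(x_k)_j = 0$ whenever $j < -1-k$. In particular $x_0 \in \frg_{\geq -1}$. Condition (ii) gives
\[u(0) = \sum_{k \geq 0} (x_k)_{-1-k}.\]
This lies in $\frg_{\leq -1} = \frn^-$, and its $\frg_{-1}$-component is $(x_0)_{-1}$.

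\emph{Step 3 (Kostant).} An element of $\frn^-$ is regular if and only if its $\frg_{-1}$-component has all simple root coordinates nonzero (Kostant). Applied to $u(0)$, this shows that $(x_0)_{-1}$ is a principal element of $\frg_{-1}$. Hence $(x_0)_{-1} = \Ad_{s} f$ for some $s$ in the maximal torus, which preserves the grading. So
\[x_0 \in (x_0)_{-1} + \frg_{\geq 0} = \Ad_s(f + \frb).\]
By Kostant, every element of $f + \frb$ is regular, so $x_0$ is regular. This contradicts $x_0 \in \frg^{\irreg}$.

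I expect the main obstacle to be the bookkeeping in Step 2. One has to get the sign conventions right: that $\rho$ acts on $\frg_j$ by $t^{j}$ with $\frn^-$ in negative degrees, and that $\rho$ is a cocharacter because $G$ is adjoint. One also has to check carefully that the congruence modulo $\Ad_{t^{-\rho}}\frg_O$, after the change of variables, is exactly conditions (i) and (ii). The Kostant inputs in Step 3 are standard.
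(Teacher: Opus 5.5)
Your proof is correct and is essentially the paper's argument. Both show that if $x \in \frg_O$ satisfies $\Ad_{t^{\rho}}x \in t^{-1}\frg^{\reg} + \frg_O$, then $x(0) \in \frg_{\geq -1}$ and the $t^0$-coefficient of $t\,\Ad_{t^{\rho}}x$ is a regular element of $\frn^-$. Kostant then forces the $\frg_{-1}$-component of $x(0)$ to be nonzero in every negative simple root space, so $x(0)$ is regular. The only cosmetic difference is how the $\Ad_{t^{-\rho}}G_O$-action is absorbed: the paper notes that $\Ad_{t^{-\rho}}(t^{-1}\frg^{\reg}+\frg_O)$ is stable under it, whereas you track $g(0)$ explicitly.
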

\begin{proof}
The image of $\frg_O \cap \Ad_{t^{-\rho}}(t^{-1}\frg_O)$ under evaluation at the closed point $\frg_O \rightarrow \frg$ is contained in $\frg_{\geq -1}$.
Moreover since \[\frg_O \cap \Ad_{t^{-\rho}}(t^{-1}\frg^{\reg} + \frg_O) \; \subset \; \Ad_{t^{-\rho}}(t^{-1}(\frn^-)^{\reg} + \frg_O),\] 
every element in the image of \[\frg_O \cap \Ad_{t^{-\rho}}(t^{-1}\frg^{\reg} + \frg_O) \rightarrow \frg_O \rightarrow \frg\] is nonzero in every negative simple root space.
Since any such element of $\frg_{\geq -1}$ is regular, we conclude that \[\frg_O \cap \Ad_{t^{-\rho}}(t^{-1}\frg^{\reg} + \frg_O) \; \subset \; \frg^{\reg} + t\frg_O.\]

Combining this with the observation that \[\Ad_{t^{-\rho}}(t^{-1} (\frn^-)^{\reg} + \frg_O)dt \rightarrow (\frg_F/\Ad_{t^{-\rho}} \frg_O)dt/\Ad_{t^{-\rho}} G_O\] factors through \[(\Ad_{t^{-\rho}}(t^{-1} \frg^{\reg} + \frg_O)/\Ad_{t^{-\rho}}\frg_O)dt/\Ad_{t^{-\rho}}G_O\]
implies that \eqref{IrregCapReg} is empty.
\end{proof}

The following lemma will be used to check the isolated intersection condition to bound the singular support from below.

\begin{lemma}\label{RegularMatrix}
Let $\fri$ and $\frj$ be Lie algebras of $I$ and $J$ respectively.
\begin{enumerate}[label=(\alph*)]
\item\label{RegularMatrix1} The adjoint action map $G_O \times (f + \frb_O) \rightarrow (\frg^{\reg})_O = \frg^{\reg} + t\frg_O$ is surjective.
\item\label{RegularMatrix2} If $\eta \in f + \frb_O$ and $x \in \frj - (\fri \cap \frj)$, then $\Ad_x \eta \not\in \frg_O$.
\end{enumerate}
\end{lemma}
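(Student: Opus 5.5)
For \ref{RegularMatrix1} I would reduce to Kostant's slice theorem and then lift to $O$-points using smoothness. Let $\frg^e$ be the centralizer of $e$. By Kostant, the action map $N \times (f + \frg^e) \rightarrow f + \frb$ is an isomorphism. Moreover the action map $G \times (f + \frg^e) \rightarrow \frg^{\reg}$ is smooth and surjective: the slice meets every regular orbit transversely, and regular orbits have constant dimension. Given $X \in (\frg^{\reg})_O$, I would proceed in two steps.
\begin{enumerate}
\item[-] First choose a preimage $(g_0, \eta_0)$ of the reduction $X \bmod t \in \frg^{\reg}$.
\item[-] Then, by the formal lifting property of smooth morphisms applied successively to $O/t^n$, and passing to the limit using completeness of $O$, lift this to an $O$-point $(g, \eta) \in G_O \times (f + \frg^e_O)$ with $\Ad_g \eta = X$.
\end{enumerate}
Since $f + \frg^e_O \subset f + \frb_O$, this gives the claimed surjectivity. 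The identification $(\frg^{\reg})_O = \frg^{\reg} + t\frg_O$ is immediate, because regularity is an open condition checked at the closed point.

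For \ref{RegularMatrix2} I would use root-space bookkeeping. For a root $\beta$, $\Ad_{t^{-\rho}}$ acts on $\frg_\beta$ by $t^{-\langle \beta, \rho\rangle} = t^{-\operatorname{ht}\beta}$. The Lie algebra $\frj = \Ad_{t^{-\rho}}(\frn^- + t\frg_O)$ therefore has the following components.
\begin{enumerate}
\item[-] $t^{1-\operatorname{ht}\beta}O$ in $\frg_\beta$ for $\beta > 0$.
\item[-] $t^{\operatorname{ht}\alpha}O$ in $\frg_{-\alpha}$ for $\alpha > 0$.
\item[-] $tO$ in the Cartan.
\end{enumerate}
Comparing with $\fri = \frn_O + t\frg_O$, an element $x \in \frj - (\fri \cap \frj)$ is exactly one with a genuine pole in some positive root space $\frg_\beta$, which forces $\operatorname{ht}\beta \geq 2$. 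Let $m < 0$ be the most negative power of $t$ appearing in $x$, and write $x_m \in \frn$ for the coefficient of $t^m$. Its $h$-weights are $\geq 2(1-m) \geq 4$, i.e. $x_m \in \frg_{\geq 2}$ (in fact $\frg_{\geq 1-m}$). Write $\eta = \eta_0 + t(\cdots)$ with $\eta_0 \in f + \frb$.

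I will interpret $\Ad_x$ first infinitesimally, as $[x, \eta]$. Its $t^m$ coefficient is then $[x_m, \eta_0]$, since all other terms of $x$ have $t$-exponent $> m$ and $\eta$ has no poles. Let $y \in \frg_j$ be the lowest $h$-weight component of $x_m$, so $j \geq 2$. Then $[y, f] \in \frg_{j-1}$ is nonzero, because by $\frsl(2)$-theory $\operatorname{ad} f$ is injective on $\frg_{\geq 1}$. All other contributions to $[x_m, \eta_0]$ lie in $\frg_{\geq j}$: the higher components of $x_m$ bracketed with $f$, and $x_m$ bracketed with $\frb$. Hence the $\frg_{j-1}$-component of $[x_m, \eta_0]$ is nonzero, so $[x, \eta] \notin \frg_O$.

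If $\Ad_x$ is meant for group elements $x \in J - (I \cap J)$, I would run the same leading-term argument on $\Ad_x \eta = \eta + [\log x, \eta] + \cdots$. Alternatively, I would factor $x$ through the root subgroups and track the most polar $t$-exponent together with the lowest $h$-weight. The main obstacle is this group version: one must check that the higher commutators cannot cancel the leading polar term. I expect a filtration argument by the combined degree ($t$-exponent plus height) to handle it, since in that grading $\frj$ sits in degrees $\geq 0$ and $f$ has degree $-1$.
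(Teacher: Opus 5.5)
Your proposal is correct and follows essentially the same route as the paper. For (a) it is smoothness of the adjoint action map plus lifting $O$-points by formal smoothness: you use the Kostant slice $f+\frg^e\subset f+\frb$ and cite Kostant's transversality, whereas the paper proves smoothness directly for all of $f+\frb$ via openness and the contracting $\bC^{\times}$-action. For (b) it is the same leading-term argument based on injectivity of $\operatorname{ad} f$ on $\frg_{\geq 1}$: you isolate the most polar $t$-coefficient and then its lowest $h$-weight, while the paper isolates the lowest $h$-weight carrying a pole.

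Since $x\in\frj$ is a Lie algebra element and $\Ad_x\eta$ here means $[x,\eta]$ (as the paper's $\GL(3)$ example shows, and as the tangent-space cleanness check in Theorem~\ref{AffineKernel} requires), your closing paragraph about group elements is not needed.
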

\begin{proof}
First we prove \ref{RegularMatrix1}.
Following Remark 3.1.4 of \cite{Ri17}, the set of $\eta \in f + \frb$ satisfying $\frg = \frb + \Ad_\eta \frg$ is
\begin{enumerate}
\item[-] open and also closed under the contracting $\bC^{\times}$-action $x \mapsto z^{-1} \Ad_{\rho(z)} \eta$,
\item[-] contains $f$ because $\frg = \frg^e \oplus \Ad_f \frg$. 
\end{enumerate}
Therefore $\frg = \frb + \Ad_\eta \frg$ holds for all $\eta \in f + \frb$. Hence the surjection $G \times (f + \frb) \rightarrow \frg^{\reg}$ is smooth. Therefore every $O$-point of $\frg^{\reg}$ can be lifted.

Now we prove \ref{RegularMatrix2}.
If $x \in \frj - (\fri \cap \frj)$, then \[x \; \in \; \zeta + \frg_O + (\frg_{\geq i+1})_F \quad \text{for some } i \geq 2 \text{ and } \zeta \in (\frg_i)_F - (\frg_i)_O.\] Therefore \beq\label{AdXEta} \Ad_x \eta \; \in \; \Ad_\zeta f + \frg_O + (\frg_{\geq i})_F,\eeq
where $\Ad_\zeta f \in (\frg_{i-1})_F - (\frg_{i-1})_O$ has a pole, because $\zeta$ lies in a positive $h$-eigenspace. Hence \eqref{AdXEta} is not contained in $\frg_O$.
\end{proof}

\begin{example}
If $G = \GL(3)$ then $\frj$ consists of matrices of the form \[\left(\begin{smallmatrix}*t&*&*t^{-1}\\ *t&*t&* \\ *t^2 & *t & *t\end{smallmatrix}\right) \quad \text{for} \quad * \in O.\]
If for example \[x = \left(\begin{smallmatrix}0&0&t^{-1}\\ 0&0&0 \\ 0 & 0 & 0\end{smallmatrix}\right) \; \in \; \frj - (\fri \cap \frj) \quad \text{and} \quad \eta = \left(\begin{smallmatrix}*&*&*\\ 1&*&* \\ 0 & 1 & *\end{smallmatrix}\right) \; \in \; f + \frb_O,\]
then \[\Ad_x \eta = \left(\begin{smallmatrix}0&t^{-1}&*t^{-1}\\ 0&0&-t^{-1} \\ 0 & 0 & 0\end{smallmatrix}\right) \; \not\in \; \frg_O.\]
\end{example}

\subsection*{Kernel of Iwahori--Whittaker averaging}
Here we characterize the kernel of Iwahori--Whittaker averaging by an irregularity condition on singular support at the closed point of the disc. 

Let $G_F$ act on an infinite type scheme $Y$, such that the quotient $G_O \backslash Y$ is a locally of finite type algebraic stack.
We get a moment map 
\[\mu: T^*(G_O \backslash Y) \rightarrow \frg_Odt/G_O\]
valued in Higgs fields on the formal disc.
We will also consider the moment map
\[\nu: T^*(J \backslash Y) \rightarrow \frj^{\perp}/J.\]

\begin{theorem}\label{AffineKernel}
For $A \in \Shv(G_O \backslash Y)$ the following are equivalent
\begin{enumerate}[label=(\alph*)]
\item\label{AffineKernel1} $\mu(\SSup(A)) \subset (\frg^{\irreg} + t\frg_O)dt/G_O$, i.e. the image of the singular support under the moment map consists of Higgs fields on the formal disc that are irregular at the closed point,
\item\label{AffineKernel2} $\nu(\SSup(\Av_{J!}A)) \subset \Ad_{t^{-\rho}}(t^{-1} (\frn^-)^{\irreg} + \frg_O)dt/J$,
\item\label{AffineKernel3} $\Psi * A \simeq 0$, i.e. $A$ is killed by Iwahori--Whittaker averaging. 
\end{enumerate}
\end{theorem}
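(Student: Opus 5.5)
The plan is to prove \ref{AffineKernel1}$\Rightarrow$\ref{AffineKernel2}, then \ref{AffineKernel2}$\Leftrightarrow$\ref{AffineKernel3}, and then \ref{AffineKernel2}$\Rightarrow$\ref{AffineKernel1}. The first step is an upper bound on singular support from Proposition \ref{SSConvolve}. The middle step reduces to the finite theorem. The last step needs a lower bound from Proposition \ref{Below}, and it is where I expect the real work.

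\textbf{\ref{AffineKernel1}$\Rightarrow$\ref{AffineKernel2}.} Write $\Av_{J!}A$ as the convolution of $A$ with the constant sheaf on $J \backslash J G_O / G_O$. Since $J\backslash G_F/G_O$ is not ind-proper here, I would use the finite dimensional version \ref{SSConvolve1}, after truncating to a finite type quotient of $G_O$ and $J$ through which everything factors. Proposition \ref{SSConvolve} gives that $\nu(\SSup(\Av_{J!}A))$ has image in $\frg_F dt/G_F$ contained in the image of $\mu(\SSup(A))$. A point of $\nu(\SSup(\Av_{J!}A))$ lies in $\frj^\perp = \Ad_{t^{-\rho}}(t^{-1}\frb^- + \frg_O)dt$, and it is $G_F$-conjugate to a point of $(\frg^{\irreg}+t\frg_O)dt$. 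Refining the conjugacy to the double coset geometry, we may take the conjugating element in $G_O$ and reduce modulo $\Ad_{t^{-\rho}}\frg_O$. Lemma \ref{RhoGIntersect} then says the element cannot lie in $\Ad_{t^{-\rho}}(t^{-1}(\frn^-)^{\reg}+\frg_O)dt$. Since it lies in $\frj^\perp$, its $t^{-1}$-part lies in $(\frn^-)^{\irreg}$, which is \ref{AffineKernel2}.

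\textbf{\ref{AffineKernel2}$\Leftrightarrow$\ref{AffineKernel3}.} By Lemma \ref{PsiLemma} and Remark \ref{KernelPsi}, $\Psi * A \simeq \Xi' * \Av_{J!}A$ up to shift. After conjugating by $t^{-\rho}$, the sheaf $\Av_{J!}A$ is $J$-equivariant and $\Xi'$ is the big tilting sheaf for the finite group $\Ad_{t^{-\rho}}G$ acting through $\Ad_{t^{-\rho}}(G_O\times_G G)$ on $J\backslash Y$. The relevant moment map is the projection $\frj^\perp \to \Ad_{t^{-\rho}}t^{-1}\frb^- dt$. Then Theorem \ref{GeneralKernel}, applied with $N^-$ in place of $N$ on the quotient by the prounipotent part, identifies the kernel of $\Xi' * -$ with the irregularity condition \ref{AffineKernel2}. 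The nilpotence hypothesis in Theorem \ref{GeneralKernel} should follow automatically, because the singular support estimate already lands in $\frj^\perp$.

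\textbf{\ref{AffineKernel2}$\Rightarrow$\ref{AffineKernel1}.} I would argue by contrapositive. Suppose $\xi\in\SSup(A)$ has $\mu(\xi)$ regular at the closed point. By Lemma \ref{RegularMatrix}\ref{RegularMatrix1}, after $G_O$-conjugation we may assume $\mu(\xi)\in(f+\frb_O)dt$. Transport $\xi$ along the correspondence $G_O\backslash Y \leftarrow (J\cap G_O)\backslash Y \to J\backslash Y$. The second map has fibers $J/(J\cap G_O)$, which correspond to $\frj$ modulo $\fri\cap\frj$, so after truncation it is a smooth map of finite type stacks. Its target covector should lie in $\Ad_{t^{-\rho}}(t^{-1}(\frn^-)^{\reg}+\frg_O)dt$, because $\Ad_{t^{-\rho}}(t^{-1}f)$ is regular. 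To apply Proposition \ref{Below} I need the intersection to be isolated and clean. Lemma \ref{RegularMatrix}\ref{RegularMatrix2} gives isolatedness: no infinitesimal fiber direction $x\in\frj-(\fri\cap\frj)$ keeps $\Ad_x\eta$ in $\frg_O$, so the preimage meets the fiber only at the base point. The main obstacle is cleanness and the smooth-point hypothesis. I would first try to choose $\xi$ generic in a top-dimensional component of $\SSup(A)$ over the regular locus, which is open, and then check transversality directly from the tangent space computation in Lemma \ref{RegularMatrix}\ref{RegularMatrix2}. The pullback to $(J\cap G_O)\backslash Y$ is smooth, so $\xi$ remains in the singular support there by Lemma \ref{EquivariantSS}\ref{EquivariantSS1}. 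Proposition \ref{Below} then places a regular covector in $\SSup(\Av_{J!}A)$, contradicting \ref{AffineKernel2}.
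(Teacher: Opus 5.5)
Your three-step architecture is the paper's, and your (b)$\Rightarrow$(a) is essentially its argument. Your worry about cleanness is unnecessary: Lemma \ref{RegularMatrix}\ref{RegularMatrix2} is an infinitesimal statement. It says the intersection has zero tangent space at the point, which gives isolatedness and cleanness at once. It does not say the preimage meets the fiber only at the base point, but that is not needed.

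The genuine gap is in (a)$\Rightarrow$(b). The only bound you actually extract from Proposition \ref{SSConvolve} is containment in $\frg_F dt/G_F$, and that is too weak. For instance, $tf\,dt$ lies in $(\frg^{\irreg}+t\frg_O)dt$, yet $\Ad_{t^{\rho}}(tf\,dt)=f\,dt$ lies in $\Ad_{t^{-\rho}}(t^{-1}(\frn^-)^{\reg}+\frg_O)dt$. The paper remarks, after Theorem \ref{KernelUnmarked}, that the full loop group only handles sheaves that are irregular everywhere. Incidentally, that $G_F$-level bound comes from the infinite dimensional setting, since $G_F/G_O$ is ind-proper.

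So everything rests on your ``refinement to the double coset geometry''. It is asserted without argument and with the wrong conjugating group. Moreover, your proposed truncation cannot exist. Setting (a) of Proposition \ref{SSConvolve} needs a single algebraic group containing both $P$ and $Q$. For rank at least two, $J$ contains root subgroups with poles, and together with $G_O$ these generate an unbounded subgroup of $G_F$. Hence $P=G_O$ and $Q=J$ have no common finite type quotient.

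The missing idea is to split $\Av_{J!}$ into two steps:
\begin{enumerate}
\item[1.] smooth pullback along $G_O\backslash Y\leftarrow (I\cap J)\backslash Y$ (note that $J\cap G_O=I\cap J$);
\item[2.] pushforward to $J\backslash Y$.
\end{enumerate}
Apply setting (a) only to the second step, with $H=\Ad_{t^{-\rho}}G_O/\Ad_{t^{-\rho}}K$, $P=(I\cap J)/\Ad_{t^{-\rho}}K$ and $Q=J/\Ad_{t^{-\rho}}K$, for a congruence subgroup $K$ with $\Ad_{t^{-\rho}}K\subset I\cap J$.

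Both $P$ and $Q$ lie in $\Ad_{t^{-\rho}}G_O$. The smooth pullback keeps the actual moment values in $(\frg^{\irreg}+t\frg_O)dt$. The resulting bound lives in $(\frg_F/\Ad_{t^{-\rho}}\frg_O)dt/\Ad_{t^{-\rho}}G_O$, that is, reduction modulo $\Ad_{t^{-\rho}}\frg_O$ together with conjugation by $\Ad_{t^{-\rho}}G_O$, not by $G_O$. This is exactly the input of Lemma \ref{RhoGIntersect}. In the example above, $tf\in\Ad_{t^{-\rho}}\frg_O$ becomes zero in this quotient, so the false positive disappears.

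A smaller point in (b)$\Leftrightarrow$(c): landing in $\frj^\perp$ does not give the nilpotence hypothesis of Theorem \ref{GeneralKernel}. The space $\frj^\perp$ allows a Cartan component in the $t^{-1}$ term. The correct reason is that $\Av_{J!}A$ is $T$-equivariant, because $T\subset G_O$ normalizes both $I\cap J$ and $J$.
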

\begin{proof}
Theorem \ref{GeneralKernel} implies that \ref{AffineKernel2} is equivalent to \ref{AffineKernel3}.

Now we prove \ref{AffineKernel1} implies \ref{AffineKernel2}. 
The averaging functor $\Av_{J!}$ is given by $*$-pullback then $!$-pushforward along \beq\label{GOToI} G_O \backslash Y \leftarrow (I \cap J) \backslash Y \rightarrow J \backslash Y.\eeq

Suppose that $\mu(\SSup(A)) \subset (\frg^{\irreg} + t\frg_O)dt/G_O$. 
Since singular support behaves well with respect to smooth pullback, 
\begin{enumerate}
\item[-] the image of $\SSup(A|_{(I \cap J) \backslash Y})$ under the moment map \[\mu_P: T^*((I \cap J) \backslash Y) \rightarrow (\fri \cap \frj)^{\perp}/(I \cap J) \rightarrow (\frg_F/\Ad_{t^{-\rho}}\frg_O)dt/\Ad_{t^{-\rho}}G_O\]
is contained in the image of $(\frg^{\irreg} + t\frg_O)dt$.
\end{enumerate}

Choose a congruence subgroup $K \subset G_O$ such that $\Ad_{t^{-\rho}}K \subset I \cap J$. The finite dimensional case of Proposition \ref{SSConvolve} for $H \coloneqq \Ad_{t^{-\rho}}G_O/\Ad_{t^{-\rho}}K$ with subgroups $P \coloneqq (I \cap J)/\Ad_{t^{-\rho}}K$ and $Q \coloneqq J/\Ad_{t^{-\rho}}K$ implies that 
\begin{enumerate}
\item[-] the image of $\SSup(\Av_{J!}A)$ under the moment map \[\mu_Q: T^*(J \backslash Y) \rightarrow \frj^{\perp}/J \rightarrow (\frg_F/\Ad_{t^{-\rho}}\frg_O)dt/\Ad_{t^{-\rho}}G_O\]
is contained in the image of $(\frg^{\irreg} + t\frg_O)dt$.
\end{enumerate}
Therefore Lemma \ref{RhoGIntersect} gives the desired \ref{AffineKernel2}.

Now we prove that \ref{AffineKernel2} implies \ref{AffineKernel1}. 
The Lagrangian correspondence between cotangent bundles induced by \eqref{GOToI} is obtained from the correspondence
\beq\label{RhoCorrespondence} \begin{tikzcd}
(\frg_Odt \cap \frj^{\perp})/(I \cap J) \arrow[r] \arrow[d] & \frj^{\perp}/(I \cap J) \arrow[r] \arrow[d] & \frj^{\perp}/J \\
\frg_Odt/(I \cap J) \arrow[r] \arrow[d] & (\fri \cap \frj)^{\perp}/(I \cap J) & \\
\frg_Odt/G_O & & 
\end{tikzcd}\eeq
by pullback along the moment map $G_F \backslash T^*Y \rightarrow \frg_Fdt/G_F$.

Suppose for contradiction that there exists a covector $\xi \in \SSup(A)$ such that \[\mu(\xi) \; \in \; (\frg^{\reg} + t\frg_O)dt/G_O.\]
Since regularity is an open condition, we may assume that $\SSup(A)$ is smooth at $\xi$.
\begin{enumerate}
\item[-] Lemma \ref{RegularMatrix}\ref{RegularMatrix1} implies that $\mu(\xi)$ is the image of some $\eta \in (f + \frb_O)dt \subset \frg_Odt \cap \frj^{\perp}$.
\item[-] Lemma \ref{RegularMatrix}\ref{RegularMatrix2} implies that $(\frg_Odt \cap \frj^{\perp})/(I \cap J) \times_{\frj^{\perp}/J} \eta$ contains an isolated point at which the intersection is clean, i.e. we have 
\[\begin{tikzcd}
\text{isolated point} \arrow[r] \arrow[d] & \eta \arrow[d] \\
(\frg_Odt \cap \frj^{\perp})/(I \cap J) \arrow[r] & \frj^{\perp}/J.
\end{tikzcd}\]
\end{enumerate}
Therefore Proposition \ref{Below} implies that $\eta \in \nu(\SSup(\Av_{J!} A))$.
\end{proof}

\begin{example} Averaging from $G_O$ to $J$ is controlled by the correspondence \eqref{RhoCorrespondence}. For $G = \GL(3)$ this correspondence becomes \[\left(\begin{smallmatrix} *&*&*\\ *&*&*\\ *&*&*\\ \end{smallmatrix}\right)dt/G_O \leftarrow \left(\begin{smallmatrix} *&*&*\\ *&*&*\\ t*&*&*\\ \end{smallmatrix}\right)dt/(I \cap J) \rightarrow \left(\begin{smallmatrix} *&t^{-1}*&t^{-2}*\\ *&*&t^{-1}*\\ t*&*&*\\ \end{smallmatrix}\right)dt/J  \quad \text{for} \quad * \in O.\] \end{example}

\begin{remark}
Theorem \ref{AffineKernel} implies that the kernel of Iwahori--Whittaker averaging is characterized by the naive condition of whether the singular support is killed by transportation across a certain Lagrangian correspondence.

Indeed let $J^- \coloneqq \Ad_{t^{-\rho}}(I)$. Recall that $T^*((J^-, \chi) \backslash Y)$ is defined as a shifted Hamiltonian reduction of $T^*Y$. Iwahori--Whittaker averaging is related to a Lagrangian correspondence between $T^*(G_O \backslash Y)$ and $T^*((J^-, \chi) \backslash Y)$,
that is obtained from the correspondence \[\frg_O dt/G_O \leftarrow \frg_O dt \cap (f dt + (\frj^-)^\perp) \rightarrow (f dt + (\frj^-)^\perp)/J^-\]
by pullback along the moment map $G_F \backslash T^*Y \rightarrow \frg_Fdt/G_F$. An element in $\frg_O dt$ is $G_O$-conjugate to an element in $f dt + (\frj^-)^\perp$ if and only if it is regular at the closed point of the formal disc. Therefore Theorem \ref{AffineKernel}\ref{AffineKernel1} is equivalent to the naive condition of being killed by the Lagrangian correspondence.
\end{remark}

\section{The Iwahori--Whittaker category}
Here we show that Iwahori--Whittaker averaging sends standard (respectively costandard) perverse sheaves in the Satake category to standard (respectively costandard) perverse sheaves in the Iwahori--Whittaker category. This will be used in the proof of Proposition \ref{Exactness}.

This section was originally written to extend certain arguments from \cite{BGMRR19} to the complex analytic setting, where the Artin--Schreier sheaf is unavailable. However \cite{Sa26} has since appeared, providing all of the necessary results. We still include this section as an alternative reference.

\subsection*{The Casselman--Shalika formula}
The following lemma is an adaptation of  Proposition 3.11 of \cite{BGMRR19}.

They use that the top compactly supported cohomology of an etale local system is free over the coefficient ring. We do not have the Artin--Schreier sheaf, so we will instead use that top Borel--Moore homology of an orientable manifold with boundary is a free module whose rank is independent of the coefficient ring.

\begin{enumerate}
\item[-] Let $\Gr^\lambda \subset \Gr$ be the spherical orbit indexed by $\lambda \in \Lambda^+$. Let $J_{\lambda!}$ and $J_{\lambda*} \in \Perv(G_O \backslash \Gr)$ be the standard and costandard objects (truncated to be perverse) supported on the closure of $\Gr^{\lambda}$.
\item[-] Let $Y^\lambda \subset \Gr$ be the $J$-orbit indexed by $\lambda \in \Lambda$. Let $\Delta_\lambda'$ and $\nabla_\lambda' \in \Perv(J \backslash \Gr)$ be the standard and costandard objects supported on $Y^\lambda$.
\end{enumerate}

\begin{lemma}\label{StandardCostandardModular}
Let $\lambda \neq \mu \in \Lambda^+$ and $k$ be a ring. Then \beq \label{CleanZero} \Hom^0(\Psi*J_{\lambda!}, \nabla_\mu' \otimes_\bZ k) \simeq 0.\eeq
\end{lemma}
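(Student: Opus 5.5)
We compute $\Hom^0(\Psi*J_{\lambda!}, \nabla'_\mu\otimes_\bZ k)$ by adjunction, reducing it to a cohomology computation on the intersection of the $J$-orbit $Y^\mu$ with $\overline{\Gr^\lambda}$. This follows the strategy of Proposition 3.11 of \cite{BGMRR19}.

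\textbf{Step 1: unwind the definition of $\Psi$.} Write $\Psi * J_{\lambda!} = \Xi' * \Av_{J!} J_{\lambda!}$ up to shift. Since $\nabla'_\mu$ is costandard on an orbit $Y^\mu$ which, as in the proof of Lemma \ref{PsiLemma}, is not stabilized by any larger parabolic, convolution with $\Xi'$ acts on maps into $\nabla'_\mu$ through the regular part. Concretely, using self-duality of $\Xi'$ and the argument of Theorem \ref{GeneralKernel}, Step \ref{Step2}, I expect
\[\Hom(\Xi' * \Av_{J!}J_{\lambda!}, \nabla'_\mu\otimes k)\simeq \Hom(\Av_{J!}J_{\lambda!}, \nabla'_\mu \otimes k)\]
up to a shift. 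The $\Xi'$ contributions from the other Weyl-group strata either vanish or only affect the irregular directions.

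\textbf{Step 2: apply adjunction.} Next we use that $\Av_{J!}$ is left adjoint to the forgetful functor, and that $\nabla'_\mu = j_{\mu*}\underline{k}[\dim Y^\mu]$. This gives
\[\Hom^0\bigl(J_{\lambda!},\; j_{\mu *}\underline{k}[\dim Y^\mu - 2\langle\check\rho,\rho\rangle]\bigr) \simeq \Hom^{0}\bigl(j_\mu^* J_{\lambda!},\, \underline{k}[\dim Y^\mu - \text{shift}]\bigr)\]
on $Y^\mu\cap \overline{\Gr^\lambda}$. Since $J_{\lambda!}$ is the perverse truncation of $j_{\lambda!}\underline{\bZ}[\dim\Gr^\lambda]$, in the relevant degree this reduces to $\Hom(j_{\lambda!}\bZ[\dim \Gr^\lambda], \cdot)$. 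The truncation only changes things in degrees that do not contribute to $\Hom^0$ by perverse degree reasons. We are therefore left with the Borel--Moore homology, equivalently the compactly supported cohomology, of $Y^\mu\cap\Gr^\lambda$ in a single degree. After the shifts this is the top degree $2\dim(Y^\mu\cap \Gr^\lambda)$ or beyond.

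\textbf{Step 3: dimension estimate.} By the standard estimate on semi-infinite intersections (Mirković--Vilonen, transported by $\Ad_{t^{-\rho}}$), the intersection $Y^\mu\cap\Gr^\lambda$ has dimension at most $\langle \rho^\vee,\lambda+\mu\rangle$ with the appropriate normalization. Equality together with an open free orbit occurs only when $\mu=\lambda$. For $\mu\neq\lambda$ there are two cases:
\begin{enumerate}
\item if $\mu\not\le\lambda$, the intersection is empty and the group vanishes trivially;
\item if $\mu<\lambda$, the intersection has dimension strictly smaller than what the shift demands, so the relevant Borel--Moore group sits above the top degree and vanishes.
\end{enumerate}
Top Borel--Moore homology of a manifold with boundary is free of rank independent of the coefficients, so base change to $k$ is harmless and the vanishing holds for every ring $k$.

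\textbf{Main obstacle.} The hardest part is Step 1 combined with the degree bookkeeping. We must verify that convolving with $\Xi'$ does not introduce Hom contributions from orbits adjacent to $Y^\mu$ stabilized by parabolics. This is where we would invoke Theorem \ref{GeneralKernel}: such contributions are $G$-irregular and hence killed. We must also check that the $\langle 2\check\rho,\rho\rangle$ shifts match exactly, so that the needed degree lies strictly above the top of $H^{BM}$ when $\mu\neq\lambda$.
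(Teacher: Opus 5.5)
Your proposal has a genuine gap in two places, Steps 1 and 3, and together they throw away exactly what makes the lemma true.

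\textbf{Step 1: you cannot discard $\Xi'$.} Convolution with a big tilting sheaf really does change Homs into costandard objects. Already for the finite big tilting sheaf, $\Hom(\Xi*\Delta_1,\nabla_w)\simeq\Hom(\Xi,\nabla_w)\neq 0$ for every $w$, whereas $\Hom(\Delta_1,\nabla_w)=0$ for $w\neq 1$. Theorem \ref{GeneralKernel} only describes the kernel of $\Xi*-$; it says nothing about its effect on Homs.

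The correct move is as follows.
\begin{enumerate}
\item First replace $J_{\lambda!}$ by $j_!\bZ_{\Gr^\lambda}[\langle 2\check\rho,\lambda\rangle]$, using t-exactness of $\Psi*-$ (Lemma \ref{PsiLemma}). Your truncation argument in Step 2 is applied to $\Av_{J!}$ alone, which is not t-exact.
\item Then transfer $\Xi'$ to the other side using its self-duality. This gives $\Hom(\Av_{J!}j_!\bZ_{\Gr^\lambda},\Xi'*\nabla'_\mu\otimes k)$ up to shift.
\item The key input is $\Xi'*\nabla'_\mu\simeq\Av_{J*}i_*\chi^!\omega_{\bC_{\geq 0}}[-\langle 2\check\rho,\mu+\rho\rangle]$. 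Here $i$ is the inclusion of the orbit $X^\mu$ of $J^-=\Ad_{t^{-\rho}}I$ through $t^\mu$, not the $J$-orbit $Y^\mu$. The map $\chi$ is the Whittaker character, and the half-plane $\bC_{\geq 0}$ replaces the unavailable Artin--Schreier sheaf.
\item Combined with $\Av_{K!}\Av_{J!}\simeq\Av_{K!}$, the Hom becomes $H^{-\langle 2\check\rho,\lambda+\mu\rangle}(\Gr^\lambda\cap X^\mu_{\geq 0},\omega\otimes k)$.
\end{enumerate}
This is where the Whittaker condition enters. Without it there is no mechanism forcing vanishing.

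\textbf{Step 3: the claimed dimension drop for $\mu<\lambda$ is false.} Since $X^\mu$ lies in the semi-infinite orbit through $t^\mu$, \cite{MV07} gives $\dim(\Gr^\lambda\cap X^\mu)\leq\langle\check\rho,\lambda+\mu\rangle$. So the degree above is exactly the top Borel--Moore degree, not beyond it, and the bound is attained with $\mu\neq\lambda$.

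For example, take $G=\mathrm{PGL}_2$, $\lambda$ the simple coroot and $\mu=0$. Then $X^0\cong\bC$, with coordinate $a$ the $t^{-1}$-coefficient of the upper right entry, and $\chi=a$. The intersection $\Gr^\lambda\cap X^0=\{a\neq 0\}$ has dimension $1=\langle\check\rho,\lambda+\mu\rangle$. Its top Borel--Moore homology is $\bZ$. Only the cut to $\mathrm{Re}\,a\geq 0$ kills it, because the resulting connected piece has boundary.

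In general, after removing the codimension $\geq 2$ locus $Z$, the group is free on the top-dimensional components of $U$ without boundary. So the lemma amounts to a geometric Casselman--Shalika statement: no top-dimensional component of $\Gr^\lambda\cap X^\mu$ has $\chi$ constant on it. No dimension count yields this.

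The paper obtains it in two steps. It first imports the vanishing for complex coefficients from the proof of Corollary 3.6 of \cite{BGMRR19}, via Riemann--Hilbert and the exponential D-module. Only then does it use the fact you cite at the end, that the rank of top Borel--Moore homology of an orientable manifold with boundary is independent of coefficients, to conclude for every ring $k$.
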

\begin{proof}
First we will prove \eqref{CS}, identifying the desired Hom space with top Borel--Moore homology of a certain orientable manifold with boundary. Then, using that it vanishes for complex coefficients by \eqref{ComplexCoefficients}, we will deduce that it vanishes for all coefficients.

Recall that $J^- \coloneqq \Ad_{t^{-\rho}} I$. Let $\chi: J^- \rightarrow \bC$ be the additive character obtained by projecting to the abelianization of $\Ad_{t^{-\rho}} N$ then summing over the simple root spaces. Let $K \subset J^-$ be the kernel of $\chi$. 

Let $i: X^\mu \rightarrow \Gr$ be the $J^-$-orbit containing $t^{\mu}$. Because $\mu$ is dominant, the character of $J^-$ descends to a function $\chi: X^\mu \rightarrow \bC$.
Let $X^{\mu}_{\geq 0}$ (respectively $X^{\mu}_{> 0}$ and $X^{\mu}_{= 0}$) be the preimage under $\chi: X^\mu \rightarrow \bC$ of the complex numbers that have nonnegative (respectively positive and zero) real part.

Let $Z \subset \Gr^{\lambda} \cap X^{\mu}_{\geq 0}$ be the union of the singular locus of $\Gr^{\lambda} \cap X^{\mu}_{> 0}$ and the singular locus of $\Gr^{\lambda} \cap X^{\mu}_{= 0}$. Let $U \subset \Gr^{\lambda} \cap X^{\mu}_{\geq 0}$ be the open complement of $Z$.

Let $\omega_{\bC_{\geq 0}} \in \Shv(\bC)$ be the dualizing sheaf supported on the complex numbers that have nonnegative real part.

We will use the following observations.
\begin{enumerate}[label=(\roman*)]
\item\label{WhittakerCalculation1} The functor $\Psi * -: \Shv(G_O \backslash \Gr) \rightarrow \Shv(J \backslash \Gr)$ is t-exact by Lemma \ref{PsiLemma}.
\item\label{WhittakerCalculation2} The big tilting sheaf $\Xi' \in \Shv_\cN(J\backslash G_F/J)$ is self dual.
\item\label{WhittakerCalculation3}
There is an isomorphism \[\Xi' * \nabla_\mu' \simeq \Av_{J*}i_*\chi^! \omega_{\bC_{\geq 0}}[-\langle 2\check{\rho}, \mu + \rho \rangle].\]
\item\label{WhittakerCalculation4} 
There is a commutative triangle
\beq \label{AveragingTriangle} \begin{tikzcd}[column sep = tiny]
\Shv(G_O \backslash \Gr) \arrow[rr, "\Av_{K!}"] \arrow[dr, "\Av_{J!}"'] & & \Shv(K \backslash \Gr)\\
& \Shv(J \backslash \Gr). \arrow[ur, "\Av_{K!}"'] &
\end{tikzcd}\eeq
\item\label{WhittakerCalculation5}
The $J^-$-orbit $X^{\mu}$ is contained in the semi-infinite $N_F$-orbit through $t^\mu$. Hence Theorem 3.2 of \cite{MV07} implies that \[\dim(\Gr^{\lambda} \cap X^{\mu}) \leq \langle\check{\rho}, \lambda + \mu \rangle.\]
Moreover $Z$ has codimension $\geq 2$ in $\Gr^\lambda \cap X^{\mu}_{\geq 0}$.
\end{enumerate}
Write $j: \Gr^{\lambda} \rightarrow \Gr$ for the inclusion of the spherical orbit. 

For every ring $k$, we have
\beq\label{CS} \begin{aligned}\Hom^0&(\Psi*J_{\lambda!}, \nabla_\mu' \otimes_\bZ k) \\ 
&\simeq \Hom^{-\langle 2\check{\rho}, \lambda \rangle}(\Psi*j_! \bZ_{\Gr^\lambda}, \nabla_\mu' \otimes_\bZ k) & \ref{WhittakerCalculation1}\\ 
&\simeq \Hom^{-\langle 2\check{\rho}, \lambda - \rho \rangle}(\Av_{J!} j_! \bZ_{\Gr^\lambda}, \Xi' * \nabla_\mu' \otimes_\bZ k) & \ref{WhittakerCalculation2} \\
&\simeq \Hom^{- \langle 2\check{\rho}, \lambda + \mu \rangle}(\Av_{J!} j_! \bZ_{\Gr^\lambda}, \Av_{J*} i_* \chi^! \omega_{\bC_{\geq 0}} \otimes_\bZ k)  & \ref{WhittakerCalculation3}\\
&\simeq \Hom^{- \langle 2\check{\rho}, \lambda + \mu \rangle}(j_! \bZ_{\Gr^\lambda}, i_* \chi^! \omega_{\bC_{\geq 0}} \otimes_\bZ k) & \ref{WhittakerCalculation4} \\
&\simeq H^{- \langle 2\check{\rho}, \lambda + \mu \rangle}(\Gr^\lambda \cap X^{\mu}_{\geq 0}, \omega \otimes_\bZ k) \\
&\simeq H^{- \langle 2\check{\rho}, \lambda + \mu \rangle}(U, \omega \otimes_\bZ k), & \ref{WhittakerCalculation5}
\end{aligned}\eeq
which either vanishes or is the top Borel--Moore homology of $U$.

Now we prove that this Borel--Moore homology vanishes for any coefficient ring, using that it vanishes for complex coefficients together with dimension considerations.

For complex coefficients, using the Riemann-Hilbert correspondence and the exponential D-module, the proof of Corollary 3.6 of \cite{BGMRR19} gives \beq\label{ComplexCoefficients}\Hom(\Psi*J_{\lambda!}, \nabla_\mu' \otimes_\bZ \bC) \simeq 0.\eeq

Since $U$ is an orientable manifold with boundary, its top Borel--Moore homology is free and the rank is the number of top dimensional connected components of $U$ without boundary.
Therefore \eqref{CS} and \eqref{ComplexCoefficients} imply that $U$ has no $\langle\check{\rho}, \lambda + \mu \rangle$-dimensional connected components without boundary. Hence \eqref{CS} vanishes for every coefficient ring, giving the desired vanishing 
\eqref{CleanZero}.
\end{proof}

\subsection*{Averaging standard and costandard objects}
The following proposition is adapted from the proof of Theorem 3.9 of \cite{BGMRR19}. It says that Iwahori--Whittaker averaging sends standard (respectively costandard) perverse sheaves in the Satake category to standard (respectively costandard) perverse sheaves in the Iwahori--Whittaker category.

\begin{proposition}\label{StandardCostandard}
For all $\lambda \in \Lambda^+$, there are isomorphisms \[\Psi*J_{\lambda!} \simeq \Xi' * \Delta_\lambda' \quad \text{and} \quad \Psi*J_{\lambda*} \simeq \Xi' * \nabla_\lambda' \quad \text{in} \quad \Shv(J \backslash \Gr).\]
\end{proposition}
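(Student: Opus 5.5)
We follow the strategy of Theorem 3.9 of \cite{BGMRR19}: we characterize $\Psi*J_{\lambda!}$ by its support and by Hom-vanishing against costandard objects, and then we identify it with $\Xi'*\Delta_\lambda'$. The costandard case is dual. It follows either by Verdier duality, using self-duality of $\Xi'$ (observation \ref{WhittakerCalculation2}) and Lemma \ref{PsiLemma}, which exchanges the $!$- and $*$-versions, or by repeating the argument with the roles of standard and costandard swapped.

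\emph{Step 1: support.} Since $\Psi*J_{\lambda!}$ is built from $\Av_{J!}$ applied to a sheaf supported on $\overline{\Gr^\lambda}$, it is supported on $\overline{\Gr^\lambda}$. Convolution with $\Xi'$ kills every $J$-orbit $Y^\nu$ whose stabilizer contains $\Ad_{t^{-\rho}}P^-$ for a parabolic $P^-$ strictly containing $B^-$, exactly as in the proof of Lemma \ref{PsiLemma}. The surviving orbits are the $Y^\nu$ with $\nu$ dominant. So $\Psi*J_{\lambda!}$ lies in the subcategory generated by the $\Xi'*\Delta'_\nu$ (equivalently the $\Xi'*\nabla'_\nu$) for $\nu\in\Lambda^+$ with $\nu\le\lambda$. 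By observation \ref{WhittakerCalculation1} it is moreover perverse.

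\emph{Step 2: Hom vanishing.} By Lemma \ref{StandardCostandardModular} with $k=\bZ$, we have $\Hom^0(\Psi*J_{\lambda!},\nabla'_\mu)=0$ for $\mu\neq\lambda$. Next we upgrade this to vanishing of all $\Hom^{n}$ with $n\le 0$, and of $\Hom^1$ where needed. In degrees below zero this follows from perversity. For the other degrees we rerun the chain \eqref{CS} in degree $n$: it becomes Borel--Moore homology $H^{-\langle2\check\rho,\lambda+\mu\rangle+n}(U,\omega)$, which vanishes for $n<0$ by the dimension bound in \ref{WhittakerCalculation5}. For $n=0$ it is Lemma \ref{StandardCostandardModular}.

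\emph{Step 3: comparison at the top orbit.} On the open orbit $Y^\lambda\cap\Gr^\lambda$ we compute $\Hom(\Psi*J_{\lambda!},\Xi'*\nabla'_\lambda)$ by the same chain \eqref{CS} with $\mu=\lambda$. There $\Gr^\lambda\cap X^\lambda_{\ge0}$ is a point-like cell, so the Hom is $R$-free of rank one, in the appropriate universal-monodromic sense. We then produce a map $\Xi'*\Delta'_\lambda\to\Psi*J_{\lambda!}$ as follows. By adjunction, using that $\Delta'_\lambda$ is a $!$-extension from $Y^\lambda$, such a map amounts to a section of the restriction to $Y^\lambda$; we take the generator and compose with $\Xi'$, using the idempotence of $\Xi'$ up to a free $R$-module factor, as in the identity $\Xi'*\Xi'\simeq\Xi'^{\oplus|W|}$.

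\emph{Step 4: the cone.} The cone $C$ of this map is supported on the orbits $Y^\nu$ with $\nu<\lambda$ and $\nu$ dominant. By Step 2, $C$ has no maps to any $\Xi'*\nabla'_\nu$ with $\nu\neq\lambda$ in the relevant degrees, and $\Xi'*\Delta'_\lambda$ has none either for support reasons. An object in the Serre-type subcategory generated by $\Xi'*\nabla'_\nu$, $\nu<\lambda$, that has no nonzero maps to any of these generators must vanish. Hence $C=0$.

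The main obstacle is Step 2. Lemma \ref{StandardCostandardModular} only gives $\Hom^0$ vanishing, and over $\bZ$ we need vanishing of the higher Ext groups relevant to the dévissage. Step 4 also needs the Ext-vanishing against costandards in degree $1$. Our first attempt will be to extend the Borel--Moore dimension argument of \eqref{CS} to all degrees. If that fails, we will instead use the base-change argument over all fields $k$ combined with the universal coefficient theorem, since Lemma \ref{StandardCostandardModular} holds for every ring $k$.
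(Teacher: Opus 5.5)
Your outline (build $f\colon \Xi'*\Delta'_\lambda\to\Psi*J_{\lambda!}$ from the top stratum, then kill the cone by Hom-vanishing against costandards) parallels the paper. However, the step you flag as the main obstacle is a genuine gap, and neither of your proposed remedies closes it.

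\textbf{The gap: injectivity of $f$.} What is missing is the vanishing of $\ker f$. In your formulation this is the vanishing of $\Hom^1(\Psi*J_{\lambda!},\nabla'_\nu)$ for $\nu\neq\lambda$.

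Your first remedy, rerunning \eqref{CS} in degree $1$, does not work. In degree $1$ the first step of the chain is no longer an isomorphism, so at best you get an injection into the Borel--Moore homology of $\Gr^\lambda\cap X^\mu_{\geq 0}$ one degree below the top. Dimension counting says nothing there. Your bound only recovers the degrees $n<0$, which perversity already handles. Moreover, the real hypersurface $X^\mu_{=0}$ gives $U$ a boundary, and for an oriented manifold with boundary $H^{\mathrm{BM}}_{\mathrm{top}-1}(U)\simeq H^1(U,\partial U)$, which can be nonzero.

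Your second remedy, the universal-coefficient route, also falls short. Let $C$ be the complex computing $\Hom^*(\Psi*J_{\lambda!},\nabla'_\nu)$; it lives in degrees $\geq 0$. Lemma \ref{StandardCostandardModular} for all $k$ gives $H^0(C)=0$ and $H^1(C)[p]=0$ for every prime $p$. So $H^1(C)$ is torsion-free, but nothing forces it to vanish. Concretely, nothing in your proposal rules out $\ker f$ being a nonzero torsion-free perverse sheaf on lower strata.

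\textbf{The missing ingredient} is the characteristic-zero input \eqref{ComplexCoefficients}. The paper uses it to see that $f\otimes_\bZ\bC$ is an isomorphism. Hence $\ker f$ is a torsion perverse subsheaf of $\Xi'*\Delta'_\lambda$, which has none. In your language, \eqref{ComplexCoefficients} gives $H^1(C)\otimes\bC=0$, and combined with torsion-freeness this forces $H^1(C)=0$.

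\textbf{Surjectivity.} Lemma \ref{StandardCostandardModular} with $k=\bZ$, as used in your Step 2, does not even yield surjectivity. An $\bF_p$-linear quotient of $\operatorname{coker} f$ has no nonzero $\Hom^0$ into the torsion-free $\nabla'_\nu$. The paper instead takes a simple quotient of $\operatorname{coker} f$, which is $\bF_p$-linear and embeds into some $\nabla'_\mu\otimes_\bZ\bF_p$, and applies the lemma with $k=\bF_p$. By treating $\ker f$ and $\operatorname{coker} f$ separately in the abelian category, no higher Ext-vanishing is needed at all.

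\textbf{Two smaller points.} The claim in Step 1 that $\Psi*J_{\lambda!}$ is supported on $\overline{\Gr^\lambda}$ is false: already $\Psi$ is not supported at the base point, since convolving with $\Xi'$ moves it off. Also, the open stratum on which $f$ is an isomorphism is $Z^\lambda=\bigcup_{w\in W}Y^{w\lambda}$, not $Y^\lambda$ alone.
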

\begin{proof}
The $J$-orbit $Y^\lambda$ is open in the support of $\Av_{J*} J_{\lambda!}$, and the restriction $(\Av_{J*} J_{\lambda!}[\langle 2\check{\rho}, \rho \rangle])|_{Y^\lambda}$ is the constant sheaf in perverse degree.

Let $Z^\lambda \subset \Gr$ be the union over $w \in W$ of $Y^{w\lambda}$. Then $Z^\lambda$ is open in the support of $\Psi * J_{\lambda!}$, and \beq\label{XiPsiOpen} (\Xi' * \Delta_{\lambda}')|_{Z^\lambda} \simeq (\Psi * J_{\lambda!})|_{Z^\lambda}.\eeq
Since $\Xi' * \Delta_{\lambda}'$ is the $!$-extension of its restriction to $Z^\lambda$, we obtain a map
\beq \label{XiPsiMap}\Xi' * \Delta_{\lambda}' \rightarrow \Psi * J_{\lambda!}\eeq
whose restriction to $Z^\lambda$ is the isomorphism \eqref{XiPsiOpen}.

Lemma \ref{StandardCostandardModular} implies that $\Psi * J_{\lambda!}$ does not admit a simple quotient supported outside $Z^\lambda$. (Such a simple quotient would inject into $\nabla_\mu' \otimes_\bZ \bF_p$ for some prime $p$ and some $\mu$ not in the Weyl group orbit of $\lambda$.) Hence \eqref{XiPsiMap}
is surjective.

Equation \eqref{ComplexCoefficients} implies that \eqref{XiPsiMap} becomes an isomorphism after tensoring with the complex numbers. Moreover $\Xi' * \Delta_\lambda'$ admits no torsion perverse subsheaf. Hence \eqref{XiPsiMap} is injective.

Therefore $\Xi' * \Delta_{\lambda}' \simeq \Psi * J_{\lambda!}$. Verdier duality gives also $\Xi' * \nabla_\lambda' \simeq \Psi * J_{\lambda*}$ as desired.
\end{proof}

\begin{remark}
Proposition \ref{StandardCostandard} can be used to prove a t-exact equivalence between representations of $\sfG$ and Iwahori--Whittaker sheaves on the affine Grassmannian. This result already appeared in \cite{Sa26}, and it extends Theorem 3.9 of \cite{BGMRR19} to the complex analytic setting.
\end{remark}

\section{Hecke operators}
For each unramified point on the curve, there is a scheme acted on by the loop group whose quotient by the spherical subgroup is $\Bun$. The corresponding moment map is given by restriction of Higgs fields to the formal disc. This geometry gives rise to Hecke operators acting on automorphic sheaves.

Here we prove that the Hecke action of a central sheaf at a tamely ramified point is isomorphic to the action of the corresponding spherical perverse sheaf at a nearby unramified point. This will be used in the proof of Proposition \ref{Exactness}.

\subsection*{Hecke modifications}
Here we recall the construction of Hecke operators.

Let $X$ be a smooth projective complex curve, equipped with a finite subset $S \subset X$ where we will allow tame ramification.
Let $\Bun$ denote the moduli space of $G$-bundles on $X$ with $N$-reductions along $S$.

For simplicity we assume here that $S = \{s\}$, i.e. there is a single tamely ramified point.

Let $\Aut$ be the group ind-scheme of continuous automorphisms of $O$. Let $\Aut^+ \subset \Aut$ be the pro-algebraic subgroup scheme of pointed automorphisms (its complex points are the same but $\Aut/\Aut^+ \simeq \Spf O$). 

Let $\Hecke$ be the moduli space parameterizing: a point $x \in X$, $G$-bundles $E$ and $E'$ on $X$ both equipped with $N$-reductions at $s$, and an identification $E|_{X - x} \simeq E'|_{X - x}$. 

For $X' \subset X - s$, there are maps
\[\begin{tikzcd}[column sep = 2em]
\Bun & \arrow[l, "a \text{ } (\text{forget } x \text{, } E')"'] \Hecke|_{X'} \arrow[d, "r \text{ } (\text{restrict to disc})"] \arrow[r, "b \text{ }(\text{forget } E)"] & \Bun \times X' \\
& (G_O \rtimes \Aut^+) \backslash \Gr \times N \backslash G/N. & 
\end{tikzcd}\]

For $B \in \Shv((G_O \rtimes \Aut^+) \backslash \Gr)$ and $A \in \Shv(\Bun)$, let $(B \boxtimes \delta) \widetilde{\boxtimes} A \coloneqq r^* (B \boxtimes \delta) \otimes a^* A$ where $\delta \in \Shv_{\cN}(N \backslash G/N)$ is the monoidal unit. Define spherical Hecke operators in families by \beq \label{HeckeFamilies} B *^{X'} A \coloneqq b_*((B \boxtimes \delta) \widetilde{\boxtimes} A) \; \in \; \Shv(\Bun \times X').\eeq

Let $\Fl \coloneqq G_F/I$ be the enhanced affine flag variety.
At the tamely ramified point, choosing a coordinate gives a diagram
\[\begin{tikzcd}
\Bun & \arrow[l, "a"'] \Hecke|_s \arrow[d, "r"] \arrow[r, "b"] & \Bun \\
& I \backslash \Fl. & 
\end{tikzcd}\]

For $B \in \Shv_{\cN}(I \backslash \Fl)$ and $A \in \Shv(\Bun)$, define $B \widetilde{\boxtimes} A \coloneqq r^* B \otimes a^*A$. Define the affine Hecke operator by \[B *^s A \coloneqq b_*(B \widetilde{\boxtimes} A) \; \in \; \Shv(\Bun).\]

\subsection*{The Nadler--Yun theorem}
Here we recall the main result of \cite{NY19a}. It says that Hecke operators send sheaves with nilpotent singular support to sheaves that are locally constant in the curve direction and nilpotent in the $\Bun$ direction.

We explain the proof by using
Proposition \ref{SSConvolve} and taking into account automorphisms of the disc. Namely we interpret the desired singular support condition in terms of the moment map for the semidirect product of the loop group and the automorphism group of the disc.

For simplicity we assume here that $S$ is empty, i.e. there is no tame ramification.

If $x$ is an $A$-point of $X$, let $D_x \coloneqq \Spec O_x$ denote the formal completion of $X \times \Spec A$ along the graph $\Gamma_x$.

Let $Y$ be the scheme whose $A$-points parameterize: an $A$-point $x$ of $X$, a $G$-bundle $E$ on $X \times \Spec A$, a continuous isomorphism $\tau: A[[t]] \simeq O_x$ such that $\tau(0 \times \Spec A) = \Gamma_x$, and a trivialization of the $G$-bundle $\tau^*(E|_{D_x})$.

Then $G_F \rtimes \Aut$ acts on $Y$ such that
\begin{enumerate}
\item[-] $(G_O \rtimes \Aut^+) \backslash Y \simeq \Bun \times X$,
\item[-] $G_O \backslash G_F \times^{G_O \rtimes \Aut^+} Y \simeq \Hecke$,
\item[-] the Hecke action \eqref{HeckeFamilies} given by convolution.
\end{enumerate}
There are natural identifications
\begin{enumerate}
\item[-] $\Lie(\Aut^+)^{\perp} = (t^{-1}O/O)dt \; \subset \; \Lie(\Aut)^* = (F/O)dt$,
\item[-] $\Lie(G_O \rtimes \Aut^+)^{\perp}/G_O \rtimes \Aut^+ = (\frg_O/G_O \times (t^{-1}O/O))dt/\Aut^+$,
\item[-] $\alpha: T_0^* \Spec O = (t^{-1}O/O)dt$.
\end{enumerate}

The action of $G_F \rtimes \Aut$ on $Y$ induces a moment map \[\mu: T^*\Bun \times T^*X \rightarrow (\frg_O/G_O \times (t^{-1}O/O))dt/\Aut^+.\]
that sends
\[(\xi \in \Map(X, \frg/(G \times \bC^{\times}))_{\omega}, x \in X, \eta \in T_x^*X) \; \mapsto \; (\tau^*\xi|_{D_x}, \alpha \tau^* \eta).\]
This is only defined up to the adjoint $\Aut^+$-action, because it depends on a choice of continuous isomorphism $\tau: O \simeq O_x$.

Writing $\Nilp \subset T^* \Bun$ for the global nilpotent cone,
\begin{enumerate}
\item[($\star$)] the desired singular support condition $\Nilp \times T^*_X X \subset T^* \Bun \times T^*X$ is exactly the preimage of $(\cN_O/G_O \times 0)dt/\Aut^+$ under the above moment map $\mu$.
\end{enumerate}

The following is Theorem 6.2.2 of \cite{NY19a}.

\begin{proposition}\label{NadlerYun}
Let $B \in \Shv((G_O \rtimes \Aut^+) \backslash \Gr)$ and $A \in \Shv(\Bun)$.
\begin{enumerate}[label=(\alph*)]
\item\label{NadlerYun1} Suppose that $V \subset (\frg_O/G_O \times (t^{-1}O/O))dt/\Aut^+$ satisfies the property that \[(\Ad_{G_F \rtimes \Aut}V) \cap  ((\frg_O/G_O \times (t^{-1}O/O))dt/\Aut^+) \; \subset \; V.\] Then $\mu(\SSup(A) \times T^*_X X) \subset V$ implies that $\mu(\SSup(B*A)) \subset V$.
\item\label{NadlerYun2} If $\SSup(A) \subset \Nilp$ then $\SSup(B*A) \subset \Nilp \times T^*_X X$.
\end{enumerate}
\end{proposition}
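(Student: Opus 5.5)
Part \ref{NadlerYun1} follows from the infinite dimensional case \ref{SSConvolve2} of Proposition \ref{SSConvolve}. Take $H \coloneqq G_F \rtimes \Aut$ acting on the scheme $Y$ constructed above. Take both $P$ and $Q$ to be $G_O \rtimes \Aut^+$. Then $P \backslash Y \simeq Q \backslash Y \simeq \Bun \times X$ is smooth, and $H/P \simeq \Gr \times \Spf O$ is ind-proper up to the formal factor coming from $\Aut/\Aut^+$.

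We view $A$ as a sheaf on $\Bun \times X$ by pullback along the projection. Its singular support is then $\SSup(A) \times T^*_X X$. By the identifications listed above, the Hecke operator \eqref{HeckeFamilies} is the convolution $B * (A \boxtimes \bZ_X)$. Proposition \ref{SSConvolve} gives an inclusion after passing to $\frh^*/H$:
\[\mu_Q(\SSup(B*A)) \subset \mu_P(\SSup(A) \times T^*_X X) \quad \text{in } \Lie(G_F \rtimes \Aut)^*/(G_F \rtimes \Aut).\]
To get the statement at the level of $P$, I will go back to the proof of Proposition \ref{SSConvolve} and use \eqref{MuQMuPCotangent} directly. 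The correspondence $b \circ a^{-1}$ moves a covector with moment image $v$ to a covector whose moment image is $\Ad_h v$ for some $h \in H$. The result must also lie in $\Lie(P)^\perp$, since it is a covector on $Q \backslash Y$. So $\mu(\SSup(B*A))$ is contained in
\[(\Ad_{G_F \rtimes \Aut} V) \cap (\frg_O/G_O \times (t^{-1}O/O))dt/\Aut^+,\]
and the hypothesis on $V$ says this lies in $V$.

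I expect the main obstacle to be this refinement from $\frh^*/H$ back to the $P$-level quotient. One has to check that the diagram \eqref{LeftRightTriangle} argument really gives $H$-conjugacy of moment values pointwise along the correspondence, not only after quotienting. A second point to check is that $\Spf O$ in $H/P$ is harmless for ind-proper pushforward. This should be fine because its reduced points are a single point, so one can reduce to ind-proper $\Gr$.

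For \ref{NadlerYun2}, apply \ref{NadlerYun1} with $V \coloneqq (\cN_O/G_O \times 0)dt/\Aut^+$. By ($\star$), $\SSup(A) \subset \Nilp$ gives $\mu(\SSup(A) \times T^*_X X) \subset V$. The conclusion $\mu(\SSup(B*A)) \subset V$ then translates via ($\star$) into $\SSup(B*A) \subset \Nilp \times T^*_X X$.

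It remains to verify the invariance hypothesis. Suppose $(\xi, \eta) \in \frg_O dt \times (t^{-1}O/O)dt$ is $G_F \rtimes \Aut$-conjugate to $(\xi_0, 0)$ with $\xi_0$ nilpotent. The $\Aut$-action fixes the zero covector in $(F/O)dt$, because the coadjoint action is linear. Hence $\eta = 0$. The $\frg_F$-component is $G_F$-conjugate to an element conjugated by a disc automorphism. Nilpotence of the characteristic polynomial is preserved by both actions, so $\xi$ is nilpotent over $F$. Since $\xi$ lies in $\frg_O$, it then lies in $\cN_O$.
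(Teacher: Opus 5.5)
Your part (a) is the paper's argument: it is the infinite-dimensional case of Proposition~\ref{SSConvolve} with $H = G_F \rtimes \Aut$ and $P = Q = G_O \rtimes \Aut^+$. The passage from $\frh^*/H$ back to $\frp^\perp/P$, which you flag as the main obstacle, is immediate. A point of $\frp^\perp/P$ whose image in $\frh^*/H$ lies in the image of $V$ belongs to $(\Ad_H V)\cap(\frp^\perp/P)$, and the hypothesis places this inside $V$. Your reduction of (b), via ($\star$), to checking that hypothesis for $V = (\cN_O/G_O \times 0)dt/\Aut^+$ is also what the paper does.

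The gap is in that check. Write covectors as $(\eta,\xi) \in (\frg_F \times F/O)dt$, with $\eta$ the Higgs-field component (the reverse of your notation). Your step ``the $\Aut$-action fixes the zero covector because the coadjoint action is linear, hence the second component vanishes'' is wrong. Because $\Aut$ acts nontrivially on $G_F$, the coadjoint action of $G_F$ itself shifts the $\Lie(\Aut)^*$-component: for $g \in G_F$,
\[
\Ad_g(\eta,\xi) = \bigl(\Ad_g\eta,\ \xi + \langle (dg/dt)g^{-1},\eta\rangle\bigr).
\]
Linearity only fixes $(0,0)$, not the subspace $\frg_F dt \times 0$.

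Notice that your argument for this vanishing never uses nilpotency. So it would equally show that $(\frg_O/G_O\times 0)dt/\Aut^+$ satisfies the hypothesis of (a). Every $A$ satisfies $\mu(\SSup(A)\times T^*_XX) \subset (\frg_O/G_O\times 0)dt/\Aut^+$, so you would conclude that every Hecke transform $B*A$ has singular support in $T^*\Bun \times T^*_XX$ with no assumption on $A$. That is false in general.

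Concretely, for $\frsl_2$ take $g = \left(\begin{smallmatrix}1 & t^{-1}\\ 0 & 1\end{smallmatrix}\right)$ and $\eta = \left(\begin{smallmatrix}1 & 0\\ -2t & -1\end{smallmatrix}\right)dt$. Then $\Ad_g\eta = \left(\begin{smallmatrix}-1 & 0\\ -2t & 1\end{smallmatrix}\right)dt$ is integral. But with the trace form, the new $\Aut$-component is $2t^{-1}dt$, which is nonzero in $(t^{-1}O/O)dt$.

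What you actually need is this: if $\eta \in \cN_O dt$ and $\Ad_g \eta \in \frg_O dt$, then $\langle (dg/dt)g^{-1},\eta\rangle \in O\,dt$. This is Lemma~5.2.5 of \cite{NY19a}, which the paper invokes, and it is exactly where nilpotency enters. For the same $g$, take a general integral $\eta = \left(\begin{smallmatrix}a & b\\ c & -a\end{smallmatrix}\right)dt$ with $\Ad_g\eta$ integral. The extra term is then $2a(0)t^{-1}dt$, and nilpotency forces $a(0)=0$. This is the real content of the Nadler--Yun theorem, so your proof of (b) is incomplete without it.
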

\begin{proof}
The infinite dimensional case of Proposition \ref{SSConvolve} implies \ref{NadlerYun1}.

The moment map induced by $\Aut$ acting on $G_F$ is given by \[T^*G_F \simeq G_F \times \frg_F dt \rightarrow (F/O)dt, \quad (g, \eta) \mapsto \langle (dg/dt)g^{-1}, \eta \rangle.\]
Thus the adjoint action of $G_F \rtimes \Aut$ on its dual Lie algebra satisfies \[\Ad_g(\eta, \xi) = (\Ad_g \eta, \xi + \langle (dg/dt)g^{-1}, \eta \rangle) \quad \text{for} \quad g \in G_F \text{ and } (\eta, \xi) \in (\frg_F\times F/O)dt.\]
Therefore Lemma 5.2.5 of \cite{NY19a} implies that $V \coloneqq (\cN_O/G_O \times 0)dt/\Aut^+$ satisfies the hypothesis of \ref{NadlerYun1}. Therefore \ref{NadlerYun1} and ($\star$) imply \ref{NadlerYun2}.
\end{proof}


\subsection*{Gaitsgory's functor} 
Here we show that acting by a central sheaf \cite{Ga01} at a tamely ramified point is equivalent to acting by the corresponding spherical perverse sheaf at a nearby unramified point.

For simplicity we assume here that $S = \{s\}$, i.e. there is a single tamely ramified point.

Let $\sfG$ be the Langlands dual group defined over the integers. 
Denote the geometric Satake equivalence \cite{MV07} by \[\sfS: \Rep(\sfG) \xrightarrow{\sim}\Perv((G_O \rtimes \Aut^+)\backslash \Gr).\]
(Proposition A.1 of \cite{MV07} says that every $G_O$-equivariant perverse sheaf admits a unique $\Aut^+$-equivariant structure.)

Let $X' \subset X$ be an affine open subvariety of the curve containing the tamely ramified point $s$.

Let $\Gait$ be the moduli space parameterizing: a point $x \in X'$, a $G$-bundle $E$ on $X'$ equipped with an $N$-reduction at $s$, and a trivialization of $E|_{X' - s - x}$.
Then $\Gait|_{X' -s}$ is a $(\Gr \times G/N)$-bundle over $X'-s$, and the special fiber is $\Gait|_s \simeq \Fl$.
Taking nearby cycles gives Gaitsgory's functor \[\sfZ: \Rep(\sfG) \rightarrow \Perv_{\cN}(I \backslash \Fl).\]

Let $\Shv_{\Nilp}(\Bun)$ be the category of sheaves on $\Bun$ with singular support contained in the global nilpotent cone.

The following is from Section 2.3.4 of \cite{NY19b}.
In the proof we will relate $\Hecke$ to $\Gait$ by uniformizing $\Bun$, then use that nearby cycles commutes with proper pushforward. 

\begin{proposition}\label{Nearby}
Let $A \in \Perv_{\Nilp}(\Bun)$ and $V \in \Rep(\sfG)$. For $x \in X' - s$ there is an isomorphism \beq\label{SphericalCentral} \sfS(V) *^x A   \simeq \sfZ(V) *^s A \; \in \; \Shv_{\Nilp}(\Bun).\eeq
(The isomorphism depends on a choice of path from $s$ to $x$.)
\end{proposition}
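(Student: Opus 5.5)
We will express both sides of \eqref{SphericalCentral} as restrictions of a single family of Hecke operators over $X'$, and then pass to the special fibre using nearby cycles. The plan has four steps.

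First we build a family version of the affine Hecke operator. Let $\Hecke^{\Gait}|_{X'}$ parametrize a point $x \in X'$, bundles $E, E'$ on $X$ with $N$-reductions at $s$, and an identification $E|_{X - s - x} \simeq E'|_{X-s-x}$ compatible with the $N$-reductions at $s$. Over $X' - s$ the fibre records a modification at $x$ together with the $G/N$ datum at $s$. Over $s$ it recovers $\Hecke|_s$. Uniformizing $\Bun$ by bundles trivialized on $X' - s - x$ (possible since $X'$ is affine, after a Beauville--Laszlo gluing with $X - X'$ fixed), we get a correspondence
\[\Bun \times X' \xleftarrow{a} \Hecke^{\Gait}|_{X'} \xrightarrow{b} \Bun \times X', \qquad r: \Hecke^{\Gait}|_{X'} \rightarrow \mathcal{L}\backslash \Gait.\]
Here $\mathcal{L}$ is the appropriate group of automorphisms. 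Concretely, $\Hecke^{\Gait}$ is the twisted product $\Gait \tilde{\times} \Bun$ over $X'$, and $b$ is proper because $\Gait \to X'$ is ind-proper.

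Next we identify the fibres. Let $\mathcal{S}(V)$ be the sheaf on $\Gait|_{X'-s}$ given by $\sfS(V) \boxtimes \delta$ spread out in families; this uses the $\Aut^+$-equivariance from Proposition A.1 of \cite{MV07}. Set
\[C \coloneqq b_*\bigl(r^*\mathcal{S}(V) \otimes a^* A\bigr) \in \Shv(\Bun \times (X'-s)).\]
By construction its restriction to $\Bun \times \{x\}$ is $\sfS(V) *^x A$. By Proposition \ref{NadlerYun}\ref{NadlerYun2}, applied with the harmless extra $N$-level at $s$, we have $\SSup(C) \subset \Nilp \times T^*_{X'}X'$. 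Hence $C$ is locally constant along $X' - s$ in the sense that $\Nilp$ is preserved. The path from $s$ to $x$ then identifies $\sfS(V) *^x A$ with the nearby fibre of $C$.

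The third step is to commute nearby cycles with the correspondence. Applying $\psi$ at $s$ to $C$, proper base change along $b$ gives $\psi(C) \simeq b_*\psi(r^*\mathcal{S}(V) \otimes a^*A)$. Since $A$ is pulled back from $\Bun$ and is constant in the curve direction, and $a$ is (locally) a smooth fibration compatible with $r$, we get
\[\psi(r^*\mathcal{S}(V) \otimes a^*A) \simeq r^*\psi(\mathcal{S}(V)) \otimes a^*A = r^*\sfZ(V) \otimes a^*A.\]
Pushing forward gives $\sfZ(V) *^s A$.

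The final step, which I expect to be the main obstacle, is identifying $\psi(C)$ with the $!$-restriction (equivalently the generic fibre) of $C$ at $s$. Nearby cycles of an object that is locally constant along the curve in the microlocal sense should equal its fibre via the chosen path. Two ingredients make this work: $C$ extends to $\Bun \times X'$ with singular support in $\Nilp \times T^*_{X'}X'$ (again Proposition \ref{NadlerYun}, now applied to the family including the special fibre), and non-characteristic restriction to $\Bun \times \{s\}$ holds. The difficulty is that $\Bun$ is a non-quasicompact stack and $\Nilp$ singular support only gives local constancy microlocally. I would handle this by restricting to quasicompact opens, which is harmless because Hecke operators of bounded $V$ only move a bounded amount. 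On such an open, the standard non-characteristic deformation lemma (Proposition 2.7.2 of \cite{KS90}) gives local constancy in the $X'$ direction, so $\psi(C) \simeq C|_{\Bun\times x}$. Finally, $\sfZ(V) *^s A$ lies in $\Shv_{\Nilp}(\Bun)$ because $\sfS(V) *^x A$ does.
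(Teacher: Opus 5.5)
Your outline matches the paper's: a family over $X'$, $\psi$ commuting with $b_*$, and Proposition \ref{NadlerYun} over $X'-s$. The gap is that the step carrying the proof, $\psi(r^*\mathcal{S}(V)\otimes a^*A)\simeq r^*\sfZ(V)\otimes a^*A$, is only asserted, and the local reason you give does not globalize by itself. A product decomposition $\Hecke|_{X'}\times_{\Bun}W\simeq\Gait\times W$ exists only over smooth covers $W\to U\subset\Bun$ of quasi-compact opens on which the universal bundle is trivialized over $W\times X'$ (Drinfeld--Simpson). There is no global chart: the twisted product $\Gait\tilde{\times}\Bun$ is formed using $\Map((X',s),(G,N))$, which is not pro-smooth, so your $\mathcal{L}\backslash\Gait$ is not a space on which you can apply $\psi$ and smooth base change directly.

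The chart-wise isomorphisms a priori depend on the trivialization. The paper shows that they do not (two splittings differ by the action of a map $W\to\Map((X',s),(G,N))$). It then descends them, using that both sides are perverse up to shift and that perverse sheaves glue (Proposition 10.2.7 of \cite{KS90}). That descent is the only place the hypothesis that $A$ is perverse is used, and your argument never uses it. Alternatively, you could build a globally defined comparison map from the specialization map $i^*\to\psi$ and the lax monoidality of $\psi$ on a local model with a pro-smooth jet group, in the spirit of \cite{Zh14}, and check it is an isomorphism on charts. Some such argument is required. Separately, $b$ is not proper: the fibres $\Gr\times G/N$ and $\Fl=G_F/I$ are $T$-torsors over ind-proper spaces. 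So commuting $\psi$ with $b_*$ also needs that pushforward of weakly $T$-constructible sheaves along a $T$-torsor commutes with nearby cycles.

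Your fourth step puts the difficulty in the wrong place and rests on a false claim. In general $C$ does not extend to $\Bun\times X'$ with singular support in $\Nilp\times T^*_{X'}X'$, and restriction to $\Bun\times\{s\}$ is not non-characteristic. Either would force the monodromy of $C$ around $s$ to be trivial. But that monodromy is the monodromy of the nearby cycles defining $\sfZ(V)$, acting on $\sfZ(V)*^sA$, and it is unipotent but in general nontrivial. Proposition \ref{NadlerYun} gives no information at the ramified point.

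None of this is needed. As you already said in your second step, $\SSup(C)\subset\Nilp\times T^*_{X'-s}(X'-s)$ makes $C$ a local system of sheaves on $\Bun$ over a punctured disc around $s$. Hence $\psi(C)$, computed on the contractible universal cover, is the fibre at $x$ transported along the chosen path, which is exactly how the paper concludes.
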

\begin{proof}
We first claim that for $A \in \Perv(\Bun)$ we have
\beq\label{NbySplit} \psi((\sfS(V) \boxtimes \delta) \widetilde{\boxtimes} A) \simeq \sfZ(V) \widetilde{\boxtimes} A,\eeq
where $\psi$ denotes nearby cycles from $\Hecke|_{X' - s}$ to $\Hecke|_s$.

The proof of this claim is based on Section 4.3.2 of \cite{Ga01}. Namely we need to compare nearby cycles in the $\Hecke$ and $\Gait$ families. For this we will trivialize the universal $G$-bundle by pulling back along various smooth maps $W \rightarrow \Bun$, so that the resulting pullbacks of $\Hecke$ split as products $\Gait \times W$.\footnote{We cannot simply pull back along a single universal map $W \rightarrow \Bun$ because the group of automorphisms of a $G$-bundle on an affine curve is not pro-smooth.}

Let $U \subset \Bun$ be a quasi-compact open substack. Then by \cite{DS95} there exists a smooth surjection $W \rightarrow U$ such that the pullback of the universal $G$-bundle along $W \times X' \rightarrow \Bun \times X$ is trivializable, compatibly with the $N$-reduction at $s$.

The choice of such a trivialization gives a splitting \[\beta: \Hecke|_{X'} \times_{\Bun} W  \xrightarrow{\sim} \Gait \times W\] fitting into commutative diagrams
\[\begin{tikzcd}[column sep = small]
\Hecke|_{X'-s} \times_{\Bun} W \arrow[d, "v"'] \arrow[r, "\sim"', "\beta"]& \Gait|_{X'-s} \times W \arrow[d]   \\
\Hecke|_{X'-s} \arrow[r, "{(r, a)}"'] & ((G_O \rtimes \Aut^+) \backslash \Gr \times N \backslash G/N) \times \Bun  
 \end{tikzcd}\]
 
 \noindent and 
 \[\begin{tikzcd}[column sep = small]
\Hecke|_s \times_{\Bun} W \arrow[d, "v"'] \arrow[r, "\sim"', "\beta"] & \Fl \times W \arrow[d] \\
\Hecke|_s \arrow[r, "{(r, a)}"'] & I \backslash \Fl \times \Bun.
 \end{tikzcd}\]

\noindent Therefore over $X' - s$ we get an isomorphism \[v^*((\sfS(V) \boxtimes \delta) \widetilde{\boxtimes} A) \simeq \beta^*((\sfS(V) \boxtimes \delta)|_{\Gait|_{X'-s}} \boxtimes A|_W).\]
Moreover nearby cycles commutes with smooth pullback. Therefore over $s$ we get an isomorphism \beq \label{UniformizeNearby} v^* \psi((\sfS(V) \boxtimes \delta) \widetilde{\boxtimes} A) \simeq \beta^* (\sfZ(V)|_{\Fl} \boxtimes A|_W) \simeq v^*(\sfZ(V) \widetilde{\boxtimes} A).\eeq

A different trivialization of the pullback to $W \times X'$ of the universal $G$-bundle gives a different splitting \[\beta': \Hecke|_{X'} \times_{\Bun} W \xrightarrow{\sim} \Gait \times W.\] The two trivializations differ by a map $W \rightarrow \Map((X', s), (G, N))$. The splittings $\beta$ and $\beta'$ differ by the corresponding family of actions on $\Gait$.
Commutativity of
\[\begin{tikzcd}
v^* \psi((\sfS(V) \boxtimes \delta) \widetilde{\boxtimes} A) \arrow[d, "\sim"'] \arrow[r, "\sim"]& \beta^* (\sfZ(V)|_{\Fl} \boxtimes A|_W) \arrow[d, "\sim"] \\
{\beta'}^* (\sfZ(V)|_{\Fl} \boxtimes A|_W) \arrow[r, "\sim"'] \arrow[ur, "\sim"]& v^*(\sfZ(V) \widetilde{\boxtimes} A), 
\end{tikzcd}\]
implies that \eqref{UniformizeNearby} is independent of the choice of trivialization. 

Therefore, the two isomorphisms obtained by pulling back \eqref{UniformizeNearby} along the two projections $W \times_U W \rightarrow W$ coincide. 
Because $A$ is assumed perverse, Proposition 10.2.7 of \cite{KS90} implies that \eqref{UniformizeNearby} descends to an isomorphism over $U \times_{\Bun} \Hecke|_{X'}$. As $U$ varies, these assemble to the desired isomorphism \eqref{NbySplit}.

Having proved the claim, using that nearby cycles commutes with proper pushforward (and pushforward of weakly $T$-constructible sheaves along projection to the base of a $T$-torsor) gives \beq\label{NearbyCentral} \psi(\sfS(V) *^{X'-s} A) \simeq \sfZ(V) *^s A,\eeq
where $\psi$ denotes nearby cycles from $\Bun \times (X'-s)$ to $\Bun \times s$.

Now assume that $A \in \Perv_{\Nilp}(\Bun)$ has nilpotent singular support. Proposition \ref{NadlerYun} says that $\sfS(V)*^{X-s} A$ is locally constant in the curve directions and has nilpotent singular support in the $\Bun$ directions. Therefore \[\sfS(V) *^x A \simeq \psi(\sfS(V) *^{X'-s} A).\]
Combined with \eqref{NearbyCentral}, this gives the desired \eqref{SphericalCentral}.
\end{proof}

\begin{remark}
We expect that by using the alternative construction of the central functor provided by \cite{Zh14}, it is probably not necessary to assume that $A \in \Shv_{\Nilp}(\Bun)$ is perverse. Indeed Zhu's moduli space can be uniformized by trivializing the relevant $G$-bundle on the formal disc rather than trivializing it on an affine curve.
\end{remark}

\section{The global nilpotent cone}
Theorem \ref{AffineKernel} characterizes the kernel of Whittaker averaging on automorphic sheaves by a singular support condition that appears to depend on the choice of point on the curve. To explain why (for nilpotent sheaves) this condition is actually independent of the point, we will study the generically regular locus of the global nilpotent cone.

Roughly we will show that
\begin{enumerate}
\item[-] its connected components are indexed by dominant coweights,
\item[-] within each such connected component, the irreducible components are indexed by certain tuples of Weyl group elements.
\end{enumerate}
This extends results from Section 2.10.3 of \cite{BD91} to allow tame ramification.\footnote{Beilinson and Drinfeld study all irreducible components not just the generically regular ones.}

Moreover we will prove that these irreducible components satisfy the following homogeneity property. For every unramified point on the curve, every irreducible component of the generically regular locus contains a Higgs field that is regular at that point.

This will be used to prove the equivalence between conditions \ref{KernelUnmarked2} and \ref{KernelUnmarked3} in Theorem \ref{KernelUnmarked}, but it is not needed for any other results in this paper.
In particular this section is not logically necessary for the proof of Theorem \ref{CentralTilting}.

\subsection*{The generically regular locus}
Recall that
\[\Bun \coloneqq \Map((X, S), (\pt/G, \pt/N))\]
denotes the moduli space of $G$-bundles on $X$ with $N$-reductions at $S$.

The global nilpotent cone is the closed (nonreduced) substack \[\Nilp \coloneqq \Map((X, S), (\cN/(G \times \bC^{\times}), \frn/N))_{\omega(S)} \; \subset \; T^*\Bun.\]
Here $\bC^{\times}$ acts on the nilpotent cone $\cN$ by scaling. The subscript `$\omega(S)$' is shorthand for the fiber product `$\times_{\Map(X, \pt/\bC^{\times})} \omega(S)$', i.e. we insist that the line bundle induced by $\cN/(G \times \bC^{\times}) \rightarrow \pt/\bC^{\times}$ is the canonical bundle twisted by the divisor $S$.

The irregular locus of the global nilpotent cone is the closed substack \[\Nilp^{\irreg} \coloneqq \Map((X, S), (\cN^{\irreg}/(G \times \bC^{\times}), \frn^{\irreg}/N))_{\omega(S)}.\]

Now we recall the notion of generic maps. If $U \subset Y$ is an open substack, let $\Map^{\gen}(X, U \subset Y)$ be the prestack whose $M$-points are maps $\sigma: X \times M \rightarrow Y$ such that there exists an open $V \subset X \times M$ satisfying
\begin{enumerate}
\item[-] $V \times_M M' \subset X \times M'$ is dense for all $M' \rightarrow M$,
\item[-] $\sigma|_V$ factors through $U$.
\end{enumerate}

The open complement of $\Nilp^{\irreg}$ is the generically regular locus \[\Nilp^{\reg} \coloneqq \Map^{\gen}((X, S), (\cN^{\reg}/(G \times \bC^{\times}) \subset \cN/(G \times \bC^{\times}), \frn/N))_{\omega(S)}.\]

\subsection*{Borel reduction}
It will be useful to replace $\Nilp^{\reg}$ by a stack $\widetilde{\Nilp}^{\reg}$ that has the same field valued points, but has better deformation theory and admits a map to the configuration space.

The generically regular locus in the global nilpotent cone is the fiber product \[\Nilp^{\reg} \simeq \Nilp'^{\reg} \times_{(\frg/G)^S} (\frn/N)^S,\]
where \[\Nilp'^{\reg} \coloneqq \Map^{\gen}(X, \cN^{\reg}/(G \times \bC^{\times}) \subset \cN/(G \times \bC^{\times}))_{\omega(S)}.\]

Let $\bC^{\times}$ act on $\frn$ by scaling, and define \[\widetilde{\Nilp}^{\reg} \coloneqq \widetilde{\Nilp}'^{\reg} \times_{(\frg/G)^S} (\frn/N)^S,\]
where \[\widetilde{\Nilp}'^{\reg} \coloneqq \Map^{\gen}(X, \frn^{\reg}/(B \times \bC^{\times}) \subset \frn/(B \times \bC^{\times}))_{\omega(S)}.\]

The following lemma says that every generically regular Higgs field admits a compatible $B$-reduction. 

\begin{lemma}\label{Springer}
The natural map $\widetilde{\Nilp}'^{\reg}  \rightarrow \Nilp'^{\reg}$ induces a bijection on field valued points. 
\end{lemma}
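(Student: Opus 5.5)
The argument is pointwise over the curve: over a field $k$, a $B$-reduction compatible with a generically regular nilpotent Higgs field is determined on a dense open subset and then extends uniquely across the finitely many remaining points. So fix a field $k$ and a $k$-point of $\Nilp'^{\reg}$, that is, a $G$-bundle $E$ on $X_k$ together with a section $\xi$ of $\mathrm{ad}(E)\otimes\omega(S)$. The section $\xi$ is fiberwise nilpotent and is regular on a dense open $V\subset X_k$. A lift to $\widetilde{\Nilp}'^{\reg}$ is a $B$-reduction $E_B$ of $E$ such that $\xi$ lies in $\frn_{E_B}\otimes\omega(S)$. We must show such a reduction exists and is unique. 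Both the target $\frn/(B\times\bC^\times)$ and the source involve the twist by $\omega(S)$ only through a line bundle, so we may trivialize it locally and ignore it.

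Over $V$ we use the Springer resolution. It restricts to an isomorphism $\widetilde{\cN}^{\reg}\to\cN^{\reg}$, because a regular nilpotent element lies in a unique Borel subalgebra. Equivalently, $\frn^{\reg}/B\simeq\cN^{\reg}/G$, and this remains true after quotienting by $\bC^\times$ scaling. Hence over $V$ there is a unique $B$-reduction $E_B|_V$ with $\xi|_V\in\frn_{E_B}$. This settles uniqueness on $V$, and so uniqueness everywhere once we know reductions are determined by their restriction to a dense open. That holds because $E/B\to X$ is proper, in particular separated, and $X$ is a reduced curve.

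For existence, we extend the section $V\to E/B$ over the finitely many missing points. Since $X_k$ is a smooth curve and $E/B\to X_k$ is a proper flag bundle, the valuative criterion extends it uniquely to all of $X_k$. The condition $\xi\in\frn_{E_B}\otimes\omega(S)$ is closed, given by the vanishing of the induced section of $(\frg/\frn)_{E_B}\otimes\omega(S)$. It holds on the dense open $V$, so it holds everywhere because $X_k$ is reduced. This produces the lift and finishes the bijection.

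The only delicate point is the generic condition in the definition of $\Map^{\gen}$. For field-valued points it just says $\xi$ is regular on a dense open, so nothing subtle occurs. We also need the lift to land in $\widetilde{\Nilp}'^{\reg}$, i.e. to be generically in $\frn^{\reg}/(B\times\bC^\times)$. This is automatic, since regularity of an element of $\frn$ means regularity in $\frg$. I do not expect a serious obstacle; the main check is that $\frn^{\reg}/B\to\cN^{\reg}/G$ is an isomorphism over an arbitrary field $k$ of characteristic zero.
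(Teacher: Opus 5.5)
Your proof is correct and takes the same route as the paper. The paper's proof is the single line ``valuative criterion plus properness of the Springer resolution $\frn/(B\times\bC^{\times})\to\cN/(G\times\bC^{\times})$''; you have unpacked it, and in doing so made explicit the input the paper leaves implicit: the Springer resolution is an isomorphism over the regular locus, which gives both the generic lift and its uniqueness.
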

\begin{proof}
This follows by the valuative criterion and properness of the Springer resolution \[\frn/(B \times \bC^{\times}) \rightarrow \cN/(G \times \bC^{\times}).\qedhere\]
\end{proof}

\subsection*{The Kazhdan action}
Now we will use the Kazhdan action to contract the generically regular locus of the global nilpotent cone to the configuration space of divisors.

If $\check{\alpha}$ is a root, let $\bC_{\check{\alpha}}$ be the corresponding representation of $T$. Let $I$ be the set of positive simple roots. 

Let $\bC^{\times}$ act on $B$ by $b \mapsto \Ad_{\rho(z)} b$ and on $\frn$ by $x \mapsto z^{-1}\Ad_{\rho(z)} x$. This gives the Kazhdan $\bC^{\times}$-action on $\frn/(B \times \bC^{\times})$ that contracts it to $(\underset{\check{\alpha} \in I}{\Pi} \bC_{\check{\alpha}})/(T \times \bC^{\times})$.

Let $\Conf$ be the configuration space of positive coweight valued divisors. Because $G$ is adjoint \[\Conf \simeq \Map^{\gen}(X,  (\underset{\check{\alpha} \in I}{\Pi} \bC_{\check{\alpha}}^{\times})/(T \times \bC^{\times}) \subset (\underset{\check{\alpha} \in I}{\Pi} \bC_{\check{\alpha}})/(T \times \bC^{\times}))_{\omega(S)}.\]

\begin{lemma}\label{NilpRegComponents}
The natural map 
\beq\label{ToConf} \widetilde{\Nilp}'^{\reg} \rightarrow \Conf\eeq
induces a bijection on connected components.
\end{lemma}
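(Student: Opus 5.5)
The plan is to use the Kazhdan $\bC^{\times}$-action to contract $\widetilde{\Nilp}'^{\reg}$ onto a section of \eqref{ToConf}. The map $\frn/(B \times \bC^{\times}) \rightarrow (\underset{\check{\alpha} \in I}{\Pi} \bC_{\check{\alpha}})/(T \times \bC^{\times})$ projects to the simple root spaces. It is equivariant for the Kazhdan action, which acts trivially on the target up to the $T$-action, since $z^{-1}\Ad_{\rho(z)}$ is the identity on simple root spaces. Moreover it admits a section, the inclusion of simple root spaces into $\frn$ (using $T \subset B$). Since regularity in $\frn$ is detected by nonvanishing of every simple root component, this section preserves the regular loci. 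Applying $\Map^{\gen}(X, -)_{\omega(S)}$ therefore gives a section $s: \Conf \rightarrow \widetilde{\Nilp}'^{\reg}$ of \eqref{ToConf}. In particular \eqref{ToConf} is surjective on connected components, and it remains to show injectivity: every point lies in the same connected component as the image of its configuration under $s$.

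For injectivity I would use that the Kazhdan action on $\frn$ by $x \mapsto z^{-1}\Ad_{\rho(z)}x$ extends to an action of the monoid $\bA^1$. At $z = 0$ it becomes the retraction onto the simple root spaces, because every non-simple positive root has height $\geq 2$ and so is scaled by $z^{\mathrm{ht}-1}$. The action of $B$ twisted by $\Ad_{\rho(z)}$ also degenerates at $z=0$, to the $T$-action via $B \rightarrow T$. This gives an $\bA^1$-family of maps of stacks $\frn/(B \times \bC^{\times}) \rightarrow \frn/(B \times \bC^{\times})$, which is the identity at $z = 1$ and equals the composite of projection and section at $z = 0$. Each map in the family preserves the generic-regularity open, again because simple root components are preserved up to units.

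Composing with a point $\sigma \in \widetilde{\Nilp}'^{\reg}$ then yields a map $\bA^1 \rightarrow \widetilde{\Nilp}'^{\reg}$ joining $\sigma$ to $s(\text{image of } \sigma)$. Along this path the image in $\Conf$ stays constant. Since $\bA^1$ is connected, $\sigma$ and $s$ of its configuration lie in the same component. Hence the components of $\widetilde{\Nilp}'^{\reg}$ are exactly the preimages of components of $\Conf$, and \eqref{ToConf} is a bijection on connected components.

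I expect the main obstacle to be making the $z \to 0$ degeneration honest at the level of stacks. The limit of the family of actions, where $B$ degenerates to $T$, must genuinely give a morphism $\bA^1 \times \frn/(B\times\bC^\times) \rightarrow \frn/(B\times\bC^\times)$, rather than only a morphism of coarse spaces. The cleanest route is to write down the family of group homomorphisms $B \rightarrow B$, $b \mapsto \Ad_{\rho(z)} b$. This family extends over $z=0$ as the map $B \rightarrow T \subset B$, because the unipotent part is contracted. Together with the compatible linear maps on $\frn$, it induces the morphism of quotient stacks. The twist by $\omega(S)$ is untouched, because the $\bC^{\times}$ factor acts through the same character throughout. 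A secondary point is checking that the generic condition in $\Map^{\gen}$ is preserved in families over $\bA^1$. This holds because the open locus where $\sigma$ is regular is unchanged along the family.
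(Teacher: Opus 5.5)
Your proposal is correct and takes essentially the same approach as the paper, whose proof just says that the Kazhdan $\bC^{\times}$-action contracts $\widetilde{\Nilp}'^{\reg}$ onto $\Conf$ and that \eqref{ToConf} is the projection to the fixed locus. You make this explicit by extending the action to the monoid $\mathbf{A}^1$ (with $B$ degenerating to $T$) and by using the section $\Conf \rightarrow \widetilde{\Nilp}'^{\reg}$, which is the same fixed-locus inclusion the paper later uses in Lemma \ref{NilpRegIrred}.
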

\begin{proof}
The Kazhdan action induces a $\bC^{\times}$-action on $\widetilde{\Nilp}'^{\reg}$ that contracts it to $\Conf$. The map in question is the projection to the fixed locus of this action, so it induces a bijection on connected components.
\end{proof}

Thus the connected components of $\widetilde{\Nilp}'^{\reg}$ are labeled by dominant coweights.
\begin{enumerate}
\item[-] Let $\Conf^{\lambda} \subset \Conf$ be the connected component consisting of divisors of degree $\lambda \in \Lambda^+$.
\item[-] Let $\widetilde{\Nilp}'^{\reg, \lambda} \subset \widetilde{\Nilp}'^{\reg}$ be the preimage of $\Conf^{\lambda}$.
\item[-] Let $\widetilde{\Nilp}^{\reg, \lambda} \coloneqq \widetilde{\Nilp}'^{\reg, \lambda} \times_{(\frg/G)^S} (\frn/N)^S$.
\end{enumerate}

\subsection*{Smoothness}
To determine the irreducible components of $\widetilde{\Nilp}^{\reg}$, we will need the following smoothness.

\begin{lemma}\label{Smooth}
The residue map $\widetilde{\Nilp}'^{\reg, \lambda} \rightarrow (\frn/B)^S$ is smooth and has connected fibers.
\end{lemma}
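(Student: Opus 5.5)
We will argue by deformation theory of the mapping stack, reducing the statement to a vanishing of $H^1$ on the curve, and then use the Kazhdan contraction of Lemma \ref{NilpRegComponents} for connectedness of the fibers.

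\emph{Smoothness.} Describe a point of $\widetilde{\Nilp}'^{\reg,\lambda}$ as a $B$-bundle $E_B$ on $X$ together with a section $\phi$ of $\frn_{E_B}\otimes\omega(S)$ that is generically regular. The tangent complex of $\widetilde{\Nilp}'^{\reg}$ at $(E_B,\phi)$ is $R\Gamma$ of the two-term complex
\[\mathcal{C} \coloneqq \bigl(\frb_{E_B} \xrightarrow{\ \operatorname{ad}\phi\ } \frn_{E_B}\otimes \omega(S)\bigr)\]
in degrees $-1, 0$ (plus the $\bC^{\times}$ factor, which is rigidified by the condition that the line bundle is $\omega(S)$). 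The residue map to $(\frn/B)^S$ has tangent complex at each $s\in S$ given by the fiber of $\mathcal{C}$ at $s$, namely $(\frb \to \frn)$. Let $\mathcal{C}(-S)$ be the subcomplex of sections vanishing at $S$, i.e.\ $\frb_{E_B}(-S)\to \frn_{E_B}\otimes\omega$. Then the relative tangent complex is $R\Gamma(X,\mathcal{C}(-S))$, and smoothness amounts to the vanishing of its $H^1$.

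\emph{Computing $H^1$.} Use Serre duality: $H^1(X,\mathcal{C}(-S))$ is dual to $H^{-1}$, equivalently $\ker$ on global sections, of the dual complex $(\frn_{E_B}\otimes\omega)^\vee\otimes\omega \to \frb_{E_B}^\vee\otimes\omega(S)$. Identify $\frn^\vee\simeq\frg/\frb\simeq \frn^-$ and $\frb^\vee\simeq\frg/\frn$ via the Killing form. The dual complex becomes $(\frg/\frb)_{E_B} \xrightarrow{\operatorname{ad}\phi} (\frg/\frn)_{E_B}\otimes\omega(S)$. Because $\phi$ is generically regular nilpotent in $\frn$, the map $\operatorname{ad}\phi:\frg/\frb\to\frg/\frn$ is injective at the generic point: the centralizer of a regular nilpotent lies in $\frn$, so $\ker(\operatorname{ad}\phi)\cap\frg\subset\frb$. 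A sheaf map that is generically injective between vector bundles has no kernel on global sections, since the kernel subsheaf is torsion in a torsion-free sheaf and hence zero. Hence $H^1$ vanishes, and the map is smooth. We must check that the generic condition is stable under passing to the residue, which holds because regularity is open. The main obstacle is to get the duality and the twist by $S$ exactly right: the residue condition with $\omega(S)$ versus $\omega$ must cancel correctly. We would first verify this in the case $S=\emptyset$ for $\mathfrak{sl}_2$.

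\emph{Connected fibers.} The Kazhdan $\bC^\times$-action of Lemma \ref{NilpRegComponents} is compatible with the residue map, provided we let $\bC^\times$ act on $(\frn/B)^S$ by the same formula. That action contracts $\frn/B$ to $(\Pi\bC_{\check\alpha})/T$ but does not fix the fibers. So instead we argue as follows. The map is smooth, hence open. Its fibers over the contracting-fixed locus are $\bC^\times$-stable and contract onto fibers of $\Conf^\lambda\to(\Pi\bC_{\check\alpha}/T)^S$, which are connected: they are products of configuration spaces with prescribed multiplicities at $S$. For a general point we use the $B^S$-action, together with the fact that the target $\frn/B$ has finitely many strata, to reduce to this case. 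Alternatively, smoothness plus connectedness of the total space $\widetilde{\Nilp}'^{\reg,\lambda}$ and of its special fibers, combined with the Stein factorization and the contraction, gives that the étale-locally-finite part of the Stein factorization is an isomorphism.
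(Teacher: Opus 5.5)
Your smoothness argument is correct and is the paper's argument in Serre-dual form. The paper notes that $\Ad_\sigma\colon\frb_F(-S)\to\frn_F\otimes\omega$ has torsion cokernel, so the relative tangent complex has no $H^1$. You instead show that the dual map $\frg/\frb\to(\frg/\frn)\otimes\omega(S)$ is generically injective. One small fix: knowing that the centralizer of $\phi$ lies in $\frn$ does not by itself give $\{x : [\phi,x]\in\frn\}=\frb$. The clean justification is that the dual map $\Ad_\phi\colon\frb\to\frn$ is surjective, because its kernel $\frg^\phi$ has dimension equal to the rank.

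The genuine gap is in the connectedness of the fibers. You correctly see that fibers over Kazhdan-fixed points of $(\frn/B)^S$ are connected, since they contract onto configuration spaces with prescribed behaviour at $S$. But neither of your routes gets from these fibers to an arbitrary one.
\begin{itemize}
\item No symmetry of the source carries a general fiber isomorphically onto a fiber over a fixed point. $B^S$ acts trivially on the quotient $(\frn/B)^S$, and $B$ typically has infinitely many orbits on $\frn$ anyway.
\item The Kazhdan flow only \emph{degenerates} the fiber over $y$ to the fiber over $\lim_{z\to 0} z\cdot y$, and connectedness of a special fiber does not propagate to nearby fibers. For example, the squaring map $\Spec\bC[y]\to\Spec\bC[t]$, $t\mapsto y^2$, is flat and equivariant for contracting actions of weights $1$ and $2$. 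Its total space is connected, its fiber over $0$ is connected, yet every other fiber is two points.
\item Your Stein factorization alternative is unavailable, because the map is not proper: generic regularity is an open condition, and the source is an Artin stack.
\end{itemize}

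The missing ingredient is Lemma~\ref{ConnectedFibers}, and this is exactly where the Kazhdan action you set aside is used. Pull back along an equivariant map $\Spec\bC[t]\to(\frn/B)^S$ through the given point, and take an idempotent $x$ on $f^{-1}(\bC^\times)$. On each affine open $\Spec A$ of the total space, flatness makes $t$ a non-zero-divisor, so $A\subset A[t^{-1}]$. Reducedness of $f^{-1}(0)$ then forces the minimal $n$ with $t^n x\in A$ to be $0$, so $x$ extends over the whole pulled-back total space. That space contracts onto the connected fixed locus of $f^{-1}(0)$, so it is connected and $x$ is trivial. Equivariance then gives connectedness of every fiber over $\bC^\times$. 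So smoothness enters the connectedness proof twice, through flatness and through reduced fibers, and reducedness is precisely what rules out the squaring example.
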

\begin{proof}
Let $F$ be a $B$-bundle on $X$, and $\sigma \in H^0(X, \frn_F \otimes \omega(S))$ be a generically regular section. The relative tangent complex \[T_{(F, \sigma)}(\widetilde{\Nilp}'^{\reg}/(\frn/B)^S) \simeq \Gamma(X, \frb_F(-S)[1] \xrightarrow{\Ad_{\sigma}} \frn_F \otimes \omega)\] is concentrated in degrees $\leq 0$, because generic regularity of $\sigma$ implies that the cokernel of $\Ad_{\sigma}$ is a torsion sheaf. This implies the desired smoothness.

The Kazhdan action induces $\bC^{\times}$-actions such that $\widetilde{\Nilp}'^{\reg, \lambda} \rightarrow (\frn/B)^S$ is equivariant. For each $\bC^{\times}$-fixed point in $(\frn/B)^S$, the $\bC^{\times}$-fixed locus in its fiber is connected. (Indeed these $\bC^{\times}$-fixed loci in the fibers  are configuration spaces of divisors with prescribed behavior along $S$.) Therefore smoothness and Lemma \ref{ConnectedFibers} imply the desired connectedness of fibers.
\end{proof}

For connectedness of the fibers we used the following lemma.

\begin{lemma}\label{ConnectedFibers}
Let $Y$ and $Z$ be schemes both equipped with contracting $\bC^{\times}$-actions. Let $f:Y \rightarrow Z$ be a flat $\bC^{\times}$-equivariant morphism with reduced fibers. Assume that for each $\bC^{\times}$-fixed point of $Z$, the $\bC^{\times}$ fixed locus of its fiber is connected. Then all fibers of $f$ are connected.
\end{lemma}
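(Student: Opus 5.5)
The plan is to degenerate an arbitrary fiber to a fiber over a $\bC^{\times}$-fixed point, show that fiber is connected, and then use flatness together with reducedness of the special fiber to propagate connectedness back to the general fiber.

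\textbf{Setup and the special fiber.} Fix a point $z \in Z$ and let $z_0 \coloneqq \lim_{t \to 0} t \cdot z$, which exists because the action on $Z$ is contracting and is $\bC^{\times}$-fixed. The orbit map $t \mapsto t\cdot z$ extends to an equivariant map $c: \mathbf{A}^1 \to Z$ with $c(0) = z_0$. Let $Y' \coloneqq Y \times_{Z} \mathbf{A}^1$, with the diagonal $\bC^{\times}$-action. Then $Y' \to \mathbf{A}^1$ is flat and equivariant, its special fiber is $Y_{z_0}$, and the action map $(y,t) \mapsto t\cdot y$ gives an isomorphism $Y_z \times \mathbf{G}_m \simeq Y'|_{\mathbf{G}_m}$. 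To see that $Y_{z_0}$ is connected, I would argue as follows. Each point $y \in Y_{z_0}$ has a limit $\lim_{t\to 0} t\cdot y$ in $Y$, since the action on $Y$ is contracting. This limit lies in $Y_{z_0}$ because $f$ is equivariant and $z_0$ is fixed, and it is a fixed point. The closure of the orbit of $y$ is connected and joins $y$ to this limit inside $Y_{z_0}$. Since the fixed locus of $Y_{z_0}$ is connected by hypothesis, $Y_{z_0}$ is connected.

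\textbf{Splitting the general fiber.} Suppose for contradiction that $Y_z = U \sqcup V$ with $U$ and $V$ nonempty, open and closed. Then $Y'|_{\mathbf{G}_m} = (U \times \mathbf{G}_m) \sqcup (V \times \mathbf{G}_m)$. Let $Y_1$ and $Y_2$ be the scheme-theoretic closures of these two pieces in $Y'$. They are torsion free over $k[t]$, hence flat over $\mathbf{A}^1$, and their special fibers are nonempty: points of $U \times \mathbf{G}_m$ have orbit limits over $0$. Because $Y'$ is flat over $\mathbf{A}^1$, $t$ is a nonzerodivisor on $\cO_{Y'}$, so $Y'$ is the closure of its generic part. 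Hence $Y' = Y_1 \cup Y_2$, and the special fibers of $Y_1$ and $Y_2$ cover $Y_{z_0}$ set-theoretically. Since $Y_{z_0}$ is connected, these two special fibers meet at some point $p$.

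\textbf{Ruling out the intersection; this is the main obstacle.} This is where flatness and reducedness must be used, and it is the step I expect to need the most care. Locally at $p$, consider the exact sequence
\[0 \rightarrow \cO_{Y'} \rightarrow \cO_{Y_1} \times \cO_{Y_2} \rightarrow C \rightarrow 0,\]
where $C = \cO_{Y_1 \cap Y_2}$. The left map is injective since $Y' = Y_1 \cup Y_2$ and $\cO_{Y'}$ is $t$-torsion free. The module $C$ is supported over $t = 0$, because $Y_1$ and $Y_2$ are disjoint over $\mathbf{G}_m$, and $C_p \neq 0$. Tensoring with $k[t]/t$ and using that $\cO_{Y_i}$ is $t$-torsion free yields
\[0 \rightarrow C[t] \rightarrow \cO_{Y_{z_0}} \rightarrow \cO_{(Y_1)_0} \times \cO_{(Y_2)_0},\]
where $C[t]$ is the $t$-torsion of $C$. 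Sections in the kernel vanish on $(Y_1)_0 \cup (Y_2)_0 = Y_{z_0}$ set-theoretically, so they are nilpotent. Reducedness of $Y_{z_0}$ then forces $C[t]_p = 0$. On the other hand $C_p$ is a nonzero $t$-power-torsion module, so $C[t]_p \neq 0$, a contradiction. Hence $Y_z$ is connected.

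The points I would double-check are that scheme-theoretic closure commutes with the local analysis used above, and that the hypotheses (contracting actions, reduced fibers) are applied to geometric points. The rest is formal.
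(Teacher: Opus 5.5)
Your proof is correct and follows the same strategy as the paper: pull back along an equivariant $\mathbf{A}^1 \to Z$ through the point, get connectedness from the contraction, and then use flatness together with reducedness of the special fiber to propagate connectedness to the general fiber. The only difference is packaging. The paper shows the total space is connected and extends an idempotent from $f^{-1}(\bC^{\times})$ by a pole-order argument (Stacks 055J). You instead use connectedness of the special fiber, and your Tor computation with $C = \cO_{Y_1 \cap Y_2}$ is exactly the obstruction to extending that idempotent, written geometrically.
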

\begin{proof}
Because the $\bC^{\times}$-action is contracting, every point in $Z$ is in the image of some $\bC^{\times}$-equivariant map $\bC \rightarrow Z$. By pulling back along such maps, it suffices to assume that $Z = \Spec \bC[t]$ equipped with the scaling $\bC^{\times}$-action.

By assumption the $\bC^{\times}$-fixed locus of $f^{-1}(0)$ is connected. Therefore $Y$ is also connected, because the $\bC^{\times}$-action contracts it to a connected scheme.

Now we will use the following argument from \cite[\href{https://stacks.math.columbia.edu/tag/055J}{055J}]{Stacks} to prove that the fibers are connected.

Let $x \in H^0(f^{-1}(\bC^{\times}), \cO)$ be a nonzero idempotent. Let $V = \Spec A$ be an affine open subscheme of $Y$. By flatness, $A$ is a subring of $A[t^{-1}]$. Let $n \geq 0$ be minimal such that $t^n x|_V \in A$. Using that $(t^nx)^2 = t^n(t^n x)$ and $f^{-1}(0)$ is reduced, we get that $n = 0$. We have proved $x|_V \in A$ for every affine open subscheme of $Y$. Therefore $x$ extends to a function on all of $Y$.

Connectedness of $Y$ implies that $x = 1$ is the trivial idempotent.  Thus we have proved that $f^{-1}(\bC^{\times})$ is connected. This implies the desired connectedness of fibers.
\end{proof}

\subsection*{Irreducible components}
Here we show that the irreducible components of $\widetilde{\Nilp}^{\reg}$ are indexed by dominant coweights and certain tuples of Weyl group elements.

Let $\St \coloneqq \frn/B \times_{\frg/G} \frn/N$, a version of the Steinberg stack. Its irreducible components are indexed by the Weyl group $W$. If $\underline{w} \in W^S$ is an $S$-tuple, let $\St_{\underline{w}} \subset \St^S$ be the corresponding irreducible component. 

\begin{proposition}\label{Irreducible}
For all $\lambda \in \Lambda^+$ and $\underline{w} \in W^S$, the stack \[\widetilde{\Nilp}^{\reg, \lambda, \underline{w}} \coloneqq \widetilde{\Nilp}^{\reg, \lambda} \times_{\St^S} \St_{\underline{w}}\] 
is irreducible (or empty). Moreover every irreducible component of $\widetilde{\Nilp}^{\reg}$ is of this form.
\end{proposition}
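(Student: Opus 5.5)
The plan is to use Lemma \ref{Smooth} to reduce irreducibility of $\widetilde{\Nilp}^{\reg, \lambda, \underline{w}}$ to irreducibility of a pullback of $\St_{\underline{w}}$ along a smooth map with connected fibers. By definition,
\[\widetilde{\Nilp}^{\reg, \lambda} = \widetilde{\Nilp}'^{\reg, \lambda} \times_{(\frg/G)^S} (\frn/N)^S = \widetilde{\Nilp}'^{\reg, \lambda} \times_{(\frn/B)^S} \St^S,\]
where the second equality uses that the map $\widetilde{\Nilp}'^{\reg,\lambda} \rightarrow (\frg/G)^S$ factors through the residue map to $(\frn/B)^S$. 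Hence
\[\widetilde{\Nilp}^{\reg, \lambda, \underline{w}} \simeq \widetilde{\Nilp}'^{\reg, \lambda} \times_{(\frn/B)^S} \St_{\underline{w}}.\]

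The first step is to pull back along the residue map $p: \widetilde{\Nilp}'^{\reg, \lambda} \rightarrow (\frn/B)^S$. By Lemma \ref{Smooth}, $p$ is smooth with connected fibers. Let $q: \St_{\underline{w}} \rightarrow (\frn/B)^S$ be the projection. The base change of $p$ along $q$ is then a smooth map
\[p': \widetilde{\Nilp}^{\reg,\lambda,\underline{w}} \rightarrow \St_{\underline{w}}\]
with connected fibers, and its image is open because smooth maps are open.

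The second step is a general fact: if $f: Y \rightarrow Z$ is smooth, hence flat and open, with connected nonempty fibers, and $Z$ is irreducible, then $f^{-1}(Z) = Y$ is irreducible. To see this, note that the fibers are smooth and connected, hence irreducible. Every irreducible component of $Y$ dominates $Z$, since $f$ is open and each component meets a dense open set that maps onto an open subset. The generic fiber is irreducible, so there is only one component that dominates $Z$. For stacks, I would check this after a smooth atlas, or argue via field-valued points as in Lemma \ref{Springer}. Here $Z$ is the open image $p'(\widetilde{\Nilp}^{\reg,\lambda,\underline{w}})$ inside the irreducible $\St_{\underline{w}}$, and it is irreducible (or empty). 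So $\widetilde{\Nilp}^{\reg, \lambda, \underline{w}}$ is irreducible or empty.

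The third step establishes that these pieces are all the irreducible components. By Lemma \ref{NilpRegComponents}, $\widetilde{\Nilp}^{\reg}$ is the disjoint union over $\lambda$ of the open and closed pieces $\widetilde{\Nilp}^{\reg,\lambda}$. Each of these is covered by the finitely many closed substacks $\widetilde{\Nilp}^{\reg,\lambda,\underline{w}}$, since $\St^S$ is the union of the $\St_{\underline{w}}$. Every irreducible component is therefore contained in one of these irreducible closed pieces. It remains to show that none of these pieces is strictly contained in another. Since $p$ is smooth, pullback preserves codimension, and all nonempty pieces have the same dimension, namely $\dim \widetilde{\Nilp}'^{\reg,\lambda} + \dim \St^S - \dim(\frn/B)^S$, because the components $\St_w$ are equidimensional. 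Hence no nonempty piece contains another, and each nonempty piece is a component.

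The main obstacle I expect is carrying out the second step for a stack that is nonreduced, together with the fact that $\St_{\underline{w}}$ is only an irreducible component of a possibly nonreduced $\St^S$. Irreducibility is a topological property, so I would work with the underlying reduced stacks throughout. The one point needing care is that the fibers of $p'$ coincide with the fibers of $p$, which is what lets Lemma \ref{Smooth} apply.
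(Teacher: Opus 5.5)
Your proposal is correct and follows the paper's route: base change the smooth, connected-fibered residue map of Lemma \ref{Smooth} to $\St_{\underline{w}}$, get irreducibility from the topological criterion the paper cites (Stacks Project Tag 004Z: open map, irreducible target, irreducible fibers), then use that the closed irreducible pieces cover $\widetilde{\Nilp}^{\reg}$. Your dimension count in the third step is more than the statement needs, since a component contained in an irreducible closed piece equals it by maximality; it does, however, give the converse that every nonempty piece is a component.
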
 
\begin{proof}
Lemma \ref{Smooth} and \cite[\href{https://stacks.math.columbia.edu/tag/004Z}{004Z}]{Stacks} imply the desired irreducibility.
Moreover every irreducible component of $\widetilde{\Nilp}^{\reg}$ is of the desired form, because $\widetilde{\Nilp}^{\reg, \lambda, \underline{w}} \subset \widetilde{\Nilp}^{\reg}$ are closed irreducible substacks whose union is the entire $\widetilde{\Nilp}^{\reg}$.
\end{proof}

The following lemma characterizes when $\widetilde{\Nilp}^{\reg, \lambda, \underline{w}}$ is nonempty. 

\begin{lemma}\label{NilpRegIrred}
The following are equivalent
\begin{enumerate}[label=(\alph*)]
\item\label{NilpRegIrred1} $\widetilde{\Nilp}^{\reg, \lambda, \underline{w}}$ is nonempty,
\item\label{NilpRegIrred2} there exists a coweight valued divisor $D \in \Conf^{\lambda}$ such that $\langle \check{\alpha}, D_s \rangle \neq 0$ for every tamely ramified point $s \in S$ and every positive simple root $\check{\alpha} \in I$ satisfying $w_s \check{\alpha} \leq 0$.\footnote{Here $D_s \in \Lambda^+$ denotes the component of the divisor supported at $s \in S$. And $w_s \in W$ denotes the component of the tuple $\underline{w} \in W^S$ indexed by $s \in S$.}
\end{enumerate} 
\end{lemma}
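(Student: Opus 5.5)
The plan is to translate both conditions into statements about the residue at each $s \in S$ and check them pointwise. Recall how a point of $\widetilde{\Nilp}^{\reg, \lambda}$ is built. It consists of a $B$-bundle $F$ on $X$ with a generically regular section $\sigma \in H^0(X, \frn_F \otimes \omega(S))$, together with, for each $s \in S$, an $N$-reduction at $s$ compatible with the residue $\sigma_s$; this datum gives a point of $\St^S$. The map \eqref{ToConf} sends $(F, \sigma)$ to the divisor $D$ of zeros of the projections of $\sigma$ to the simple root line bundles, one for each $\check{\alpha} \in I$. Hence $\langle \check{\alpha}, D_s\rangle \neq 0$ exactly when the $\check{\alpha}$-component of the image of $\sigma_s$ in $\frn/[\frn,\frn]$ vanishes.

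I will use the standard description of the Steinberg components. A point of $\St$ is a pair $(x, gB)$ with $x \in \frn \cap \Ad_g \frn$. The component $\St_w$ is the closure of the locus where $gB$ lies in $BwB/B$. On this cell, after conjugating by $B$, we may take $g = \dot w$, so that $x \in \frn \cap \Ad_{\dot w}\frn$. The simple root space $\frg_{\check{\alpha}}$ lies in $\Ad_{\dot w}\frn$ iff $w^{-1}\check{\alpha} > 0$. I will fix the convention for $w_s$ so that this matches the condition $w_s \check{\alpha} \leq 0$ in the statement; this bookkeeping of $w$ versus $w^{-1}$ should be settled first.

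(a) $\Rightarrow$ (b). Consider the closed condition on $\St$ that the $\check{\alpha}$-component of $x$ in $\frn/[\frn,\frn]$ vanishes. It holds on the open cell of $\St_w$ for every simple $\check{\alpha}$ with $w\check{\alpha} \leq 0$ (in the fixed convention), because there $\frg_{\check{\alpha}} \not\subset \frn \cap \Ad_{\dot w}\frn$ and $\frn \cap \Ad_{\dot w}\frn$ is a sum of root spaces. Hence it holds on all of $\St_w$. So given any point of $\widetilde{\Nilp}^{\reg, \lambda, \underline{w}}$, its divisor $D \in \Conf^\lambda$ satisfies $\langle \check{\alpha}, D_s\rangle \neq 0$ for the relevant $\check{\alpha}$ and $s$, which is (b).

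(b) $\Rightarrow$ (a). Given $D$, construct a Kazhdan-fixed point directly. Since $G$ is adjoint, $D$ determines a $T$-bundle with sections $\sigma_{\check{\alpha}}$ of $\bC_{\check{\alpha}} \otimes \omega(S)$ having divisor $\langle\check{\alpha}, D\rangle$. Induce to $B$ and set $\sigma = \sum_{\check{\alpha} \in I}\sigma_{\check{\alpha}}$, which is generically regular. Its residue at $s$, in a trivialization, is $x_s = \sum c_{\check{\alpha}} e_{\check{\alpha}}$, where $c_{\check{\alpha}} \neq 0$ only if $\langle \check{\alpha}, D_s\rangle = 0$. By (b), every such $\check{\alpha}$ satisfies $w_s\check{\alpha} > 0$, so $x_s \in \frn \cap \Ad_{\dot w_s}\frn$. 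Choosing the $N$-reduction at $s$ to be $\dot w_s B$ gives a point of the open cell of $\St_{w_s}$, and hence a point of $\widetilde{\Nilp}^{\reg, \lambda, \underline{w}}$.

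The main obstacle is making the Steinberg-component bookkeeping precise. This means fixing the convention for $w$ versus $w^{-1}$, and justifying that $\St_w$ is the closure of its cell with the stated fiber $\frn \cap \Ad_{\dot w}\frn$ (rather than some other component passing through these points). I would check these first on $\frsl(2)$ and then for a general simple reflection.
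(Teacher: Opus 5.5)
Your proposal is correct and is essentially the paper's proof. For (a)$\Rightarrow$(b), you use that vanishing of the relevant simple-root coordinates of the residue is a closed $B$-invariant condition holding on all of $\St_{w_s}$; for (b)$\Rightarrow$(a), you take the Kazhdan-fixed Higgs field attached to $D$ via the section $\Conf \to \widetilde{\Nilp}'^{\reg}$ and extend it at each $s$ by choosing the $N$-reduction. The only differences are cosmetic. The paper packages the Steinberg input through the orbital variety $O_w = \overline{\Ad_B(\frn \cap \Ad_w \frn)}$, viewed as the image of $\St_w$ in $\frn/B$, whereas you work directly on the open cell of $\St_w$; in (b)$\Rightarrow$(a) this even places the residue in $\frn \cap \Ad_{\dot w_s}\frn$ itself. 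The $w$ versus $w^{-1}$ convention you flag is also left implicit in the paper, which states the lemma with $w_s$ but its orbital-variety facts with $w^{-1}$.
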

\begin{proof}
We need the following facts about the Steinberg stack. For $w \in W$, define the orbital variety $O_w \subset \frn$ as the closure of $\Ad_B(\frn \cap \Ad_w(\frn))$.
\begin{enumerate}[label=(\roman*)]
\item\label{Orbital1} The image of $\St_w \rightarrow \frn/B$ is $O_w/B$.
\item\label{Orbital2} For every element of $O_w$, its image under $\frn \rightarrow \underset{\check{\alpha} \in I}{\Pi} \bC_{\check{\alpha}}$ vanishes for every positive root $\check{\alpha}$ satisfying $w^{-1} \check{\alpha} \leq 0$.
\item\label{Orbital3} If an element of $\underset{\check{\alpha} \in I}{\Pi} \bC_{\check{\alpha}}$ vanishes for every positive root $\check{\alpha}$ satisfying $w^{-1} \check{\alpha} \leq 0$, then its image under $\underset{\check{\alpha} \in I}{\Pi} \bC_{\check{\alpha}} \rightarrow \frn$ contained in $O_w$.
\end{enumerate}

Now we prove that \ref{NilpRegIrred1} implies \ref{NilpRegIrred2}. If a Higgs field in $\widetilde{\Nilp}'^{\reg}$ can be extended to $\widetilde{\Nilp}^{\reg, \lambda, \underline{w}}$ then by \ref{Orbital1} and \ref{Orbital2} its image under \eqref{ToConf} gives a divisor satisfying \ref{NilpRegIrred2}. 

Finally we prove that \ref{NilpRegIrred2} implies \ref{NilpRegIrred1}. For this let $D$ be a divisor satisfying \ref{NilpRegIrred2}. The inclusion of the fixed locus of the Kazhdan action \[\Conf \rightarrow \widetilde{\Nilp}'^{\reg}\]
gives a section of \eqref{ToConf} sending $D$ to a Higgs field in $\widetilde{\Nilp}'^{\reg, \lambda}$. By \ref{Orbital1} and \ref{Orbital3}, the image of this Higgs field under $\widetilde{\Nilp}'^{\reg} \rightarrow (\frn/B)^S$ is contained in the image of $\St_{\underline{w}}$. Therefore it can be extended to a Higgs field in $\widetilde{\Nilp}^{\reg, \lambda, \underline{w}}$.
\end{proof}

\begin{example}
Every element of the $\lambda = 0$ connected component is everywhere regular. Therefore the image of $\widetilde{\Nilp}'^{\reg, 0} \rightarrow (\frn/B)^S$ is contained in the generically regular locus. Hence $\widetilde{\Nilp}^{\reg, 0, \underline{w}}$ is empty unless all components of the tuple $\underline{w} \in W^S$ are the identity. 
In contrast, if $\lambda$ is sufficiently dominant, then $\widetilde{\Nilp}^{\reg, \lambda, \underline{w}}$ is nonempty for all $\underline{w} \in W^S$.
\end{example}

\subsection*{Homogeneity}
The following proposition says that every irreducible component of the generically regular locus of the global nilpotent cone satisfies a certain homogeneity property.

The idea is to relate Higgs fields to coweight valued divisors, such that a divisor supported away from a point gives a Higgs field that is regular at that point.
\begin{proposition}\label{Homogeneous}
For every unramified point $x \in X - S$, every irreducible component of $\Nilp^{\reg}$ contains a Higgs field that is regular at $x$.
\end{proposition}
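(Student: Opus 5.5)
By Lemma \ref{Springer}, the map $\widetilde{\Nilp}^{\reg} \rightarrow \Nilp^{\reg}$ is surjective on field valued points. So every irreducible component of $\Nilp^{\reg}$ contains the image of a dense subset of some irreducible component of $\widetilde{\Nilp}^{\reg}$. Since regularity at $x$ is an open condition, it suffices to show the following. For every $\lambda$ and $\underline{w}$ with $\widetilde{\Nilp}^{\reg, \lambda, \underline{w}}$ nonempty, this irreducible stack (Proposition \ref{Irreducible}) contains a Higgs field whose image in $\Nilp^{\reg}$ is regular at $x$. Irreducibility then makes the regular-at-$x$ locus dense in that component, and hence dense in its image. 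Every component of $\Nilp^{\reg}$ is the closure of such an image, so this gives the proposition.

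To produce such a point, I would use the section $\Conf \rightarrow \widetilde{\Nilp}'^{\reg}$ given by the Kazhdan fixed locus, as in the proof of Lemma \ref{NilpRegIrred}. The Higgs field attached to a divisor $D$ has, in each simple root direction $\check{\alpha}$, a section of the relevant line bundle vanishing exactly along $\langle \check{\alpha}, D\rangle$. At a point where all these components are nonzero, the Higgs field lies in $\frn^{\reg}$, because an element of $\frn$ with nonzero projection to every simple root space is regular. So if $D$ is supported away from $x$, the Higgs field is regular at $x$.

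It remains to choose $D \in \Conf^{\lambda}$ supported away from $x$ that still satisfies condition \ref{NilpRegIrred2} of Lemma \ref{NilpRegIrred}. Nonemptiness gives some $D$ satisfying \ref{NilpRegIrred2}. That condition only constrains the components $D_s$ at points $s \in S$. I would keep those components, and move the remaining part of $D$, which is supported on $X - S$, to points of $X - S - x$. This is possible because $X - S - x$ is nonempty and the degree is unchanged, so the new divisor stays in $\Conf^{\lambda}$. Lemma \ref{NilpRegIrred} (the direction \ref{NilpRegIrred2} implies \ref{NilpRegIrred1}, via facts \ref{Orbital1} and \ref{Orbital3}) then extends the resulting Higgs field to a point of $\widetilde{\Nilp}^{\reg, \lambda, \underline{w}}$. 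Since $x \notin S$, this extension does not change the Higgs field near $x$.

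The main obstacle I expect is the bookkeeping in the first step. The map $\widetilde{\Nilp}^{\reg} \rightarrow \Nilp^{\reg}$ is only a bijection on points, not an isomorphism. So I must check that irreducible components correspond, meaning each component of $\Nilp^{\reg}$ is the closure of the image of one $\widetilde{\Nilp}^{\reg, \lambda, \underline{w}}$. I would argue this topologically: the map is a homeomorphism on underlying spaces, using properness of the Springer resolution as in Lemma \ref{Springer}. If that fails, it is enough that the map is surjective and that images of irreducible sets are irreducible. Each component of $\Nilp^{\reg}$ then contains a dense image of some $\widetilde{\Nilp}^{\reg, \lambda, \underline{w}}$, and the openness argument above goes through.
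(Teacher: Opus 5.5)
Your proposal is correct and follows essentially the same route as the paper. You use Lemma \ref{Springer} to place the image of some irreducible piece $\widetilde{\Nilp}^{\reg,\lambda,\underline{w}}$ inside a given component $Y$, then apply the Kazhdan section to a divisor in $\Conf^{\lambda}$ that satisfies condition (b) of Lemma \ref{NilpRegIrred} and avoids $x$. The differences are cosmetic: the paper shows the preimage of $Y$ contains a whole irreducible component rather than taking closures of images, and it leaves implicit the step of moving the off-$S$ part of $D$ away from $x$, which you spell out. Your density argument is harmless but unnecessary, since the image of that piece already lies in $Y$.
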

\begin{proof}
Let $Y \subset \Nilp^{\reg}$ be an irreducible component. Write $Z \subset \Nilp^{\reg}$ for the union of all other irreducible components. Lemma \ref{Springer} implies that \[\widetilde{\Nilp}^{\reg} \rightarrow \Nilp^{\reg}\] is a bijection on field valued points.
Therefore the preimages of $Y$ and $Z$ are proper closed substacks whose union is the entire $\widetilde{\Nilp}^{\reg}$. Hence the preimage of $Y$ contains the entirety of some irreducible component of $\widetilde{\Nilp}^{\reg}$. Proposition \ref{Irreducible} implies that the preimage of $Y$ contains some $\widetilde{\Nilp}^{\reg, \lambda, \underline{w}}$.

By Lemma \ref{NilpRegIrred}, there exists a divisor $D \in \Conf^{\lambda}$ satisfying condition \ref{NilpRegIrred2}, such that $x$ does not appear in its support. Then
\[\Conf \rightarrow \widetilde{\Nilp}'^{\reg}\] sends $D$ to a Higgs field in $\widetilde{\Nilp}'^{\reg, \lambda}$. By \ref{Orbital1} and \ref{Orbital3} this can be extended to a Higgs field in $\widetilde{\Nilp}^{\reg, \lambda, \underline{w}}$ that is regular at $x$. Its image in $Y \subset \Nilp^{\reg}$ is also regular at $x$.
\end{proof}

\begin{remark}
In fact Footnote 17 of \cite{FR25} shows that $\widetilde{\Nilp}^{\reg} \rightarrow \Nilp^{\reg}$ is a locally closed embedding. Therefore it induces a homeomorphism between the underlying topological spaces and a bijection between the sets of irreducible components.
\end{remark}

\section{Whittaker averaging of automorphic sheaves}
Here we characterize the kernel of Whittaker averaging at unramified and tamely ramified points of the curve in terms of singular support. Using this we prove certain t-exactness properties of the Hecke action of Whittaker averaged central sheaves.

\subsection*{Anti-temperedness}
The following proposition says that the kernel of Whittaker averaging at an unramified point consists of exactly the sheaves that have irregular singular support.

Recall that from Remark \ref{KernelPsi} that convolving by $\Psi \in \Perv(J \backslash \Gr)$ has the same kernel as Iwahori--Whittaker averaging.

\begin{theorem}\label{KernelUnmarked}
Let $A \in \Shv(\Bun)$ and $x \in X - S$ be an unramified point. Then the following are equivalent
\begin{enumerate}[label=(\alph*)]
\item\label{KernelUnmarked2} every Higgs field in $\SSup(A)$ is irregular at $x$,
\item\label{KernelUnmarked1} $\Psi *^x A \simeq 0$, i.e. $A$ is killed by Whittaker averaging at $x$.
\end{enumerate}
If $\SSup(A) \subset \Nilp$, then the above is also equivalent to
\begin{enumerate}[label=(\alph*)]
\setcounter{enumi}{2}
\item\label{KernelUnmarked3} every Higgs field in $\SSup(A)$ is irregular everywhere.
\end{enumerate}
\end{theorem}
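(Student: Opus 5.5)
The plan is to deduce \ref{KernelUnmarked2}$\Leftrightarrow$\ref{KernelUnmarked1} directly from Theorem \ref{AffineKernel}, applied to the $G_F$-scheme attached to the point $x$. Then we deduce \ref{KernelUnmarked2}$\Leftrightarrow$\ref{KernelUnmarked3} from the homogeneity of Proposition \ref{Homogeneous}.

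For the first equivalence, let $Y_x$ be the scheme parameterizing a $G$-bundle on $X$ (with $N$-reductions along $S$) together with a trivialization on the formal disc $D_x$, for a fixed coordinate at $x$. Then $G_F$ acts on $Y_x$ and $G_O \backslash Y_x \simeq \Bun$, which is a locally of finite type algebraic stack. The Hecke operator $\Psi *^x -$ is convolution along $G_O\backslash G_F \times^{G_O} Y_x \simeq \Hecke|_x$, so it agrees with $\Psi * -$ in the sense of Theorem \ref{AffineKernel}. The moment map $\mu : T^*\Bun \to \frg_O dt/G_O$ is restriction of Higgs fields to $D_x$; this is the fixed-point version of the identification used before Proposition \ref{NadlerYun}, now without the $\Aut$-factor. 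Hence condition \ref{AffineKernel1} of Theorem \ref{AffineKernel} says exactly that every Higgs field in $\SSup(A)$ has irregular value at $x$. This is \ref{KernelUnmarked2}, and Theorem \ref{AffineKernel} gives \ref{KernelUnmarked2}$\Leftrightarrow$\ref{KernelUnmarked1}. The only check is the moment map identification, which follows from the standard deformation-theoretic description $T^*_E\Bun = H^0(X, \frg_E^* \otimes \omega(S))$ subject to the residue conditions at $S$, together with Lemma \ref{MomentCorrespond}.

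Now suppose $\SSup(A)\subset \Nilp$. The implication \ref{KernelUnmarked3}$\Rightarrow$\ref{KernelUnmarked2} is trivial. For the converse, assume \ref{KernelUnmarked2} and suppose that some Higgs field in $\SSup(A)$ is generically regular. Since $\SSup(A)$ is a closed conic Lagrangian contained in $\Nilp$, and $\Nilp$ is Lagrangian, $\SSup(A)$ is a union of irreducible components of $\Nilp$; the relevant statement is Theorem 8.5.5 of \cite{KS90} together with equidimensionality of the global nilpotent cone. Therefore $\SSup(A)\cap \Nilp^{\reg}$ is open in $\SSup(A)$ and nonempty, and so it contains an entire irreducible component of $\Nilp^{\reg}$. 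By Proposition \ref{Homogeneous}, that component contains a Higgs field regular at $x$, which contradicts \ref{KernelUnmarked2}. Hence every Higgs field in $\SSup(A)$ lies in $\Nilp^{\irreg}$. Since $\Nilp^{\irreg}$ consists of the Higgs fields that are not generically regular, and a nilpotent Higgs field that is not generically regular is irregular at every point, this gives \ref{KernelUnmarked3}.

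The main obstacle is the step asserting that $\SSup(A)$ is a union of irreducible components of $\Nilp$. This needs $\Nilp$ (or at least its generically regular part) to be Lagrangian, hence of pure dimension $\dim \Bun$, in the tamely ramified setting. I would try to obtain this from Lemma \ref{Smooth} and Proposition \ref{Irreducible}: a dimension count via the smooth map to $(\frn/B)^S$ shows that each $\widetilde{\Nilp}^{\reg,\lambda,\underline{w}}$ has dimension $\dim\Bun$. Since $\SSup(A)$ is a Lagrangian of that same dimension and is contained in $\Nilp$, its intersection with $\Nilp^{\reg}$ must be a union of components.
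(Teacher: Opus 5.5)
Your proposal is correct and follows the paper's proof. You get (a)$\Leftrightarrow$(b) from Theorem \ref{AffineKernel}, with the moment map given by restricting Higgs fields to $D_x$. For (a)$\Rightarrow$(c) you use that $\SSup(A)$ and $\Nilp^{\reg}$ are both half dimensional, so $\SSup(A)$ must contain an entire irreducible component of $\Nilp^{\reg}$, and Proposition \ref{Homogeneous} then applies. The only difference is where the half-dimensionality of $\Nilp^{\reg}$ comes from: the paper cites Ginzburg's Lagrangian argument with Lemma \ref{Springer} replacing his Lemma 6, while your count via Lemma \ref{Smooth} (relative dimension $(g-1)\dim\frg + |S|\dim\frb = \dim\Bun$ over the zero-dimensional $\St_{\underline w}$) also works, provided you transport it along $\widetilde{\Nilp}^{\reg}\to\Nilp^{\reg}$ using Lemma \ref{Springer}.
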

\begin{proof}
Theorem \ref{AffineKernel} implies \ref{KernelUnmarked2} is equivalent to \ref{KernelUnmarked1}. Moreover \ref{KernelUnmarked3} directly implies \ref{KernelUnmarked2}. 

It remains to prove that \ref{KernelUnmarked2} implies \ref{KernelUnmarked3}, i.e. independence of the point. The idea is that, although the singular support may contain a Higgs field that is generically regular but irregular at $x$, by homogeneity it would then necessarily contain a different Higgs field regular at $x$. 

Suppose for contradiction that $A \in \Shv_{\Nilp}(\Bun)$ contains a generically regular Higgs field. 
Recall that
\begin{enumerate}[label=(\roman*)]
\item\label{ComponentDimension1} $\Nilp^{\reg} \subset \Nilp$ is open, 
\item\label{ComponentDimension2} $\Nilp^{\reg}$ and $\SSup(A) \subset T^*\Bun$ are both half dimensional.\footnote{See \cite{Gi01} for a proof that the global nilpotent cone is Lagrangian. (We only need that the generically regular locus is Lagrangian, so Lemma \ref{Springer} can be used instead of Lemma 6 of \cite{Gi01}.)}
\end{enumerate}
Therefore there exists an irreducible component $Y \subset \Nilp^{\reg}$ such that \[\dim(\SSup(A) \cap Y) = \dim(\SSup(A) \cap \Nilp^{\reg}) \overset{\ref{ComponentDimension1}}{=}  \dim(\SSup(A)) \overset{\ref{ComponentDimension2}}{=} \dim Y.\]
Since $\SSup(A) \cap Y$ is closed in $Y$ of equal dimension, we get $Y \subset \SSup(A)$.
Proposition \ref{Homogeneous} now implies that $\SSup(A)$ contains a Higgs field that is regular at $x$.
\end{proof}

\begin{remark}
The property described in Theorem \ref{KernelUnmarked} is called `anti-temperedness'. It matches the notion introduced in \cite{AG15} by Corollary 2.5.12 of \cite{Be21}.
\end{remark}

\begin{remark}
Taking $H$ to be the full loop group in Proposition \ref{SSConvolve} gives an easier proof that \ref{KernelUnmarked3} implies \ref{KernelUnmarked1}, because $\cN^{\irreg}_Fdt$ is stable under $G_F$-conjugation. But to prove that \ref{KernelUnmarked2} implies \ref{KernelUnmarked1}, we needed to take $H$ to be a finite dimensional quotient of $\Ad_{t^{-\rho}}(G_O)$ and use the more involved analysis from Lemma \ref{RhoGIntersect}.
\end{remark}

\begin{remark}
In the setting of constructible nilpotent sheaves with complex coefficients (and no tame ramification), Færgeman and Raskin gave a different proof that  \ref{KernelUnmarked1} implies \ref{KernelUnmarked3} using the global Whittaker coefficients functor \cite{FR25}.
\end{remark}

\begin{remark}
For nilpotent sheaves, it is possible to prove that \ref{KernelUnmarked1} is independent of the choice of point using \cite{NY19a, Be21} and the argument sketched in Section 1.3 of \cite{FR23}. After proving that \ref{KernelUnmarked1} is equivalent to \ref{KernelUnmarked2}, this gives a different argument that \ref{KernelUnmarked2} implies \ref{KernelUnmarked3}, without using the geometry of the global nilpotent cone.
\end{remark}

\begin{remark}
In the setting of Zariski constructible sheaves with complex coefficients (and no tame ramification), Færgeman and Raskin proved that \ref{KernelUnmarked1} is independent of the choice of point \cite{FR23} using the de Rham spectral action. Therefore \ref{KernelUnmarked2} is equivalent to \ref{KernelUnmarked3} even without the assumption of nilpotent singular support.
\end{remark}

Let $\Xi \in \Shv_{\cN}(I \backslash \Fl)$ be the big tilting sheaf supported on the closure of the orbit indexed by the longest element $w_0 \in W$.

The following proposition describes the kernel of Whittaker averaging at a tamely ramified point in terms of singular support.

\begin{proposition}\label{KernelMarked}
Let $A \in \Shv(\Bun)$ and $s \in S$ be a tamely ramified point. Suppose that the residue at $s$ of every Higgs field in $\SSup(A)$ is nilpotent. Then the following are equivalent
\begin{enumerate}[label=(\alph*)]
\item the residue at $s$ of every Higgs field in $\SSup(A)$ is irregular,
\item $\Xi *^s A \simeq 0$, i.e. $A$ is killed by Whittaker averaging at $s$.
\end{enumerate}
\end{proposition}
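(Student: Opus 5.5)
The plan is to reduce to Theorem \ref{GeneralKernel}, applied to a finite-dimensional space with an action of the finite group $G$. Near the tamely ramified point $s$, the Hecke action of $I \backslash \Fl$ on $\Bun$ is convolution along $\Hecke|_s$. The sheaf $\Xi$ is supported on the closure of the finite orbit $I \backslash G_O/I \simeq N \backslash G/N$. So $\Xi *^s -$ only involves changing the $N$-reduction at $s$, keeping the $G$-bundle fixed. Concretely, let $\Bun_{G}$ be the moduli of $G$-bundles with full level structure at $s$ (a trivialization of the fiber at $s$). This carries a $G$-action with $N \backslash \Bun_G \simeq \Bun$. Under this identification, $\Xi *^s A$ is the finite Hecke convolution $\Xi * A$ of Theorem \ref{GeneralKernel} with $Y = \Bun_G$. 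The identification will be checked directly from the definition of $\Hecke|_s$ restricted to $G_O$-modifications.

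Next I would identify the moment map $\mu: T^*(N\backslash \Bun_G) \to \frb/N$, up to the trace-form identification $\frn^\perp \simeq \frb$. The cotangent space to $\Bun_G$ at a bundle $E$ with level structure consists of Higgs fields in $H^0(X, \frg_E \otimes \omega(s))$. The moment map for changing the trivialization at $s$ is the residue at $s$. So $\mu$ sends a Higgs field $\xi \in T^*\Bun$ to its residue $\mathrm{res}_s \xi \in \frb/N$, where $\frb$ is the annihilator of $\frn$ encoding the $N$-reduction. This is the standard computation: the deformation complex of the level structure at $s$ is dual to evaluating the polar part. It parallels Lemma \ref{MomentCorrespond} and the discussion before Proposition \ref{NadlerYun}.

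With this, the hypothesis that all residues of Higgs fields in $\SSup(A)$ are nilpotent says exactly that $A \in \Shv_{\cN}(N \backslash \Bun_G)$. Condition (a) says $A$ is $G$-irregular. Theorem \ref{GeneralKernel} then gives (a)$\Leftrightarrow$(b) directly. One subtlety: $\Bun_G$ is only locally of finite type, but Theorem \ref{GeneralKernel} is stated for locally of finite type algebraic stacks, so no extra work is needed. Also, $\Xi$ here lives in $\Shv_\cN(I\backslash\Fl)$ while Theorem \ref{GeneralKernel} uses $\Xi \in \Shv_\cN(N\backslash G/N)$. These agree under pushforward along the closed embedding $N\backslash G/N \simeq I\backslash G_O/I \hookrightarrow I\backslash \Fl$.

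The main obstacle is the first step: identifying the affine Hecke action of sheaves supported on $I\backslash G_O/I$ with the finite Hecke action on $N\backslash\Bun_G$. This requires showing that the correspondence $\Hecke|_s$ restricted over $I\backslash G_O/I$ is the convolution diagram $G/N \times N\backslash \Bun_G \leftarrow G\times^N \Bun_G \to \Bun_G$, modulo $N$. I would prove this by noting that a modification supported in $G_O$ does not change the underlying $G$-bundle, only the $N$-reduction at $s$, and then comparing moduli descriptions. Once that is established, the rest is the moment map identification and a direct application of Theorem \ref{GeneralKernel}.
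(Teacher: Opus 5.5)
Your proposal is correct and follows the paper's own argument, which simply declares the result immediate from Theorem \ref{GeneralKernel}. You make explicit what the paper leaves implicit: the $G$-stack of bundles with full level structure at $s$ (whose quotient by $N$ is $\Bun$), the residue at $s$ as the moment map to $\frb/N$, and the identification of $\Xi *^s -$ (supported on $I\backslash G_O/I \simeq N\backslash G/N$) with finite Hecke convolution.
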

\begin{proof}
Immediate by Theorem \ref{GeneralKernel}.
\end{proof}

The following corollary says that any automorphic sheaf that is killed by Whittaker averaging at every unramified point is also killed by Whittaker averaging at every tamely ramified point.

\begin{corollary}\label{KernelContainment}
Suppose that $A \in \Shv(\Bun)$ satisfies $\Psi *^x A 
\simeq 0$ for all $x \in X - S$. Then also $\Xi *^s A \simeq 0$ for all $s \in S$.
\end{corollary}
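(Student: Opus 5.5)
The plan is to combine Theorem \ref{KernelUnmarked}, applied at every unramified point, with a closedness argument that carries irregularity across to the marked point $s$. Then I would apply the implication \ref{GeneralKernel2}$\Rightarrow$\ref{GeneralKernel1} of Theorem \ref{GeneralKernel} to the Hecke action at $s$.

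\emph{Step 1: irregularity away from $S$.} By hypothesis $\Psi *^x A \simeq 0$ for all $x \in X - S$. The equivalence \ref{KernelUnmarked2}$\Leftrightarrow$\ref{KernelUnmarked1} of Theorem \ref{KernelUnmarked} holds without any nilpotence assumption. It gives that every Higgs field $\sigma \in \SSup(A)$ is irregular at every $x \in X - S$.

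\emph{Step 2: passing to the residue at $s$.} A covector in $T^*\Bun$ is a section $\sigma$ of $\frg_E \otimes \omega(S)$ whose residue at $s$ lies in $\frb$; this is the annihilator of the Lie algebra of the $N$-reduction. The irregular locus $\frg^{\irreg} \subset \frg$ is closed and stable under $G \times \bC^{\times}$. Hence it defines a closed subscheme $\frg^{\irreg}_E \otimes \omega(S)$ of the total space of $\frg_E \otimes \omega(S)$. The section $\sigma$ lands in this closed subscheme over the dense open $X - S$, so it lands there everywhere. In particular its value at $s$, which is the residue, is irregular. Thus the residue at $s$ of every Higgs field in $\SSup(A)$ lies in $\frb^{\irreg}$.

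\emph{Step 3: the finite Whittaker vanishing at $s$.} Let $Y \to \Bun$ be the $G$-torsor obtained by forgetting the $N$-reduction at $s$ and replacing it by a full trivialization of the fiber at $s$. Then $\Bun \simeq N \backslash Y$ and the affine Hecke action of $\Xi$ at $s$ is computed by finite convolution on $N \backslash Y$. I need to check that the finite big tilting sheaf $\Xi$ (supported over $w_0$) acts through the finite Hecke category, using the $G$-equivariance of the appropriate Hecke correspondence. Under $\Bun \simeq N \backslash Y$, the moment map $T^*(N\backslash Y) \to \frb/N$ is the residue at $s$. So Step 2 says precisely that $A$ is $G$-irregular. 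The implication \ref{GeneralKernel2}$\Rightarrow$\ref{GeneralKernel1} of Theorem \ref{GeneralKernel} then gives $\Xi *^s A \simeq 0$.

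\emph{Main obstacle.} Theorem \ref{GeneralKernel} is stated for $A \in \Shv_{\cN}(N\backslash Y)$, whereas the corollary does not assume the residues are nilpotent. I would check that the proof of the direction \ref{GeneralKernel2}$\Rightarrow$\ref{GeneralKernel1} never uses nilpotence. That proof restricts along equivariant maps $N\backslash G \to N \backslash Y$, which preserves $G$-irregularity by Lemma \ref{EquivariantSS}. It uses self-duality of $\Xi$. It uses Lemma \ref{EquivariantSS}\ref{EquivariantSS3} to see that $\Av_{N_x*}A$ is $G$-irregular. Finally it uses that $\Hom(\Xi * \delta_x, -)$ computes vanishing cycles at a regular nilpotent covector. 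The last point is the delicate one. If the identification in \cite{Ta25} is only stated on $\Shv_{\cN}$, I would first replace $A$ by its unipotent-monodromic part. Since $\Xi$ is universal unipotent monodromic, $\Xi * A$ only depends on that part. Its singular support has moment image in the nilpotent part of the $G$-irregular set, which reduces the claim to the stated case. If that reduction fails, I would instead restrict to nilpotent residues as in Proposition \ref{KernelMarked}, where the hypothesis is explicit.
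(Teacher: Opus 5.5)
Your proposal is correct and follows the paper's argument: Theorem \ref{KernelUnmarked} gives irregularity at every point of $X-S$, closedness of $\frg^{\irreg}$ carries this to the residue at $s$, and Proposition \ref{KernelMarked} (that is, Theorem \ref{GeneralKernel} applied to the finite Hecke action at $s$) gives $\Xi *^s A \simeq 0$. The paper cites Proposition \ref{KernelMarked} without addressing its nilpotent-residue hypothesis, which is harmless in its application in Proposition \ref{Exactness} since there the sheaf lies in $\Shv_{\Nilp}(\Bun)$; your reduction removes it, but phrase it as replacing $A$ by $\Delta_1 * A$, which has nilpotent singular support, stays $G$-irregular by Proposition \ref{SSConvolve}, and satisfies $\Xi * A \simeq \Xi * \Delta_1 * A$ (the monodromy here is universal over $\bZ[\Lambda]$, not unipotent).
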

\begin{proof}
Suppose that an automorphic sheaf is killed by Whittaker averaging at every unramified point. Then Theorem \ref{KernelUnmarked} implies that every Higgs field in its singular support is irregular along $X - S$ and hence also irregular along $S$.
Therefore Proposition \ref{KernelMarked} implies that it is also killed by Whittaker averaging at every tamely ramified point.
\end{proof}

\subsection*{T-exactness}
Here we prove certain t-exactness properties of the Hecke action of Whittaker averaged central sheaves.

Write `$A \leq 0$' as shorthand for `$A$ lies in nonpositive degrees with respect to the perverse t-structure', and similarly for nonnegative degrees.

\begin{proposition}\label{Exactness}
Let $V \in \Rep(\sfG)$ admit a costandard (respectively standard) filtration, and $s \in S$ be a tamely ramified point. Then the functor \[\Xi *^s \sfZ(V) *^s -: \Shv_{\Nilp}(\Bun) \rightarrow \Shv_{\Nilp}(\Bun)\]
is right (respectively left) t-exact
\end{proposition}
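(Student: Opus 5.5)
The plan is to reduce to perverse inputs and costandard $V$, trade the central sheaf at $s$ for a spherical sheaf at a nearby unramified point $x$, and then show that the part of the output in positive perverse degrees is anti-tempered, so that $\Xi *^s -$ kills it. I treat the costandard case; the standard case is Verdier dual, with $\Delta'_\lambda$ in place of $\nabla'_\lambda$ and left in place of right.

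\textbf{Reductions.} Right t-exactness is preserved under extensions in $V$, so we may assume $V$ is a single costandard object $\nabla_\lambda$, with $\sfS(\nabla_\lambda) = J_{\lambda*}$. Since $\Shv_{\Nilp}(\Bun)$ is stable under perverse truncation, it suffices to prove $\Xi *^s \sfZ(V) *^s A \leq 0$ for $A \in \Perv_{\Nilp}(\Bun)$.

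\textbf{Moving to an unramified point.} Choose $x \in X' - s$. Proposition \ref{Nearby} gives
\[\sfZ(V) *^s A \simeq \sfS(V) *^x A =: C,\]
and Proposition \ref{NadlerYun} shows $C \in \Shv_{\Nilp}(\Bun)$. Consider the triangle $\tau^{\leq 0} C \to C \to \tau^{>0} C$.

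\textbf{The first term.} The residue at $s$ of every Higgs field in $\Nilp$ is nilpotent. So Proposition 3.13 of \cite{Ta25}, together with Lemma \ref{FiniteHecke}\ref{FiniteHecke3}, should show that $\Xi *^s -$ is t-exact on $\Shv_{\Nilp}(\Bun)$. This is the same input used in the proof of Theorem \ref{GeneralKernel}. Hence $\Xi *^s \tau^{\leq 0} C \leq 0$, and it remains to prove $\Xi *^s \tau^{>0} C \simeq 0$.

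\textbf{The second term.} By Corollary \ref{KernelContainment}, it suffices to show $\tau^{>0} C$ is killed by $\Psi *^{x'} -$ for every unramified $x'$. Since $\tau^{>0}C$ has nilpotent singular support, Theorem \ref{KernelUnmarked} (equivalence of \ref{KernelUnmarked1} and \ref{KernelUnmarked3}) reduces this to the single point $x$. At $x$, I will use the following facts.
\begin{enumerate}
\item[(i)] $\Psi *^x -$ is t-exact on $\Shv(\Bun)$. This should follow as in Lemma \ref{PsiLemma}: $\Psi$ is both a $!$- and a $*$-averaging up to shift, so $\Psi *^x -$ is both left and right t-exact.
\item[(ii)] Associativity of Hecke operators at $x$ gives $\Psi *^x C \simeq (\Psi * J_{\lambda*}) *^x A$.
\item[(iii)] Proposition \ref{StandardCostandard} gives $\Psi * J_{\lambda*} \simeq \Xi' * \nabla'_\lambda$.
\item[(iv)] Convolution with $\nabla'_\lambda$ is right t-exact, being a $*$-pushforward along the affine embedding of the orbit $Y^\lambda$ (this is the analogue of Lemma \ref{FiniteHecke}\ref{FiniteHecke3}). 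Convolution with the tilting sheaf $\Xi'$ is t-exact.
\end{enumerate}
Together, (ii)--(iv) give $\Psi *^x C \leq 0$. Then (i) gives
\[\Psi *^x \tau^{>0} C \simeq \tau^{>0}(\Psi *^x C) \simeq 0.\]

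\textbf{Main obstacle.} I expect the hardest part to be the t-exactness statements (i) and (iv) on $\Bun$ rather than on $\Gr$. The ingredients are: the $J$-averaged orbit structure of the Hecke stack at $x$, affineness of $Y^\lambda \hookrightarrow \Gr$ relative to the Hecke correspondence, and the fact that the relevant maps are smooth or proper so that Artin vanishing applies. The first thing I would try is to pass to the local model $(G_O \rtimes \Aut^+)\backslash Y \simeq \Bun \times X$ and argue fibrewise over the orbit stratification, as in the proof of Lemma \ref{FiniteHecke}\ref{FiniteHecke3}.
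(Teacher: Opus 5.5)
Your proposal is correct and follows the paper's proof. You reduce to perverse $A$, trade $\sfZ(V) *^s A$ for $\sfS(V) *^x A$ via Proposition \ref{Nearby}, kill $\tau^{>0}$ with $\Psi *^x -$ using Proposition \ref{StandardCostandard} and t-exactness, and conclude with Corollary \ref{KernelContainment} and t-exactness of $\Xi *^s -$.

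The one difference is how you get vanishing at all unramified points. You apply Proposition \ref{Nearby} at a single $x$ and then propagate using (b)$\Rightarrow$(c) of Theorem \ref{KernelUnmarked}, which relies on the global nilpotent cone homogeneity of Section 6. The paper instead applies Proposition \ref{Nearby} at every unramified $x$, since $\tau^{>0}(\sfZ(V) *^s A)$ does not depend on $x$. This keeps Section 6 out of the logical dependencies of Theorem \ref{CentralTilting}, as the paper intends.
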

\begin{proof}
Suppose that $V$ admits a costandard filtration. Proposition 13.1 of \cite{MV07} and Proposition \ref{StandardCostandard} imply that $\Psi * \sfS(V) \in \Shv(J \backslash \Gr)$ also admits a costandard filtration. Therefore, for every unramified point $x \in X - S$ the functor \beq\label{ConvolveExact} \Psi *^x \sfS(V) *^x -  \quad \text{is right t-exact.}\eeq

Suppose that $A \in \Perv_{\Nilp}(\Bun)$ is perverse. For every unramified point $x \in X - S$, t-exactness of $\Psi *^x -$ implies \[\Psi *^x\tau^{>0} (\sfZ(V) *^s A) \simeq \tau^{>0}(\Psi *^x \sfZ(V) *^s A) \overset{\text{Proposition }\ref{Nearby}}{\simeq} \tau^{>0}(\Psi *^x \sfS(V)*^x A) \overset{\eqref{ConvolveExact}}{\simeq} 0.\] Therefore Corollary \ref{KernelContainment} implies $\Xi *^s \tau^{>0} (\sfZ(V) *^s A) \simeq 0$. The t-exactness of $\Xi *^s -$ implies that $\Xi *^s \sfZ(V) *^s A \leq 0$. We have proved that $\Xi *^s \sfZ(V) *^s -$ is right t-exact.

If $V$ admits a standard filtration, a similar argument shows that $\Xi *^s \sfZ(V) *^s -$ is left t-exact.
\end{proof}

\begin{remark}
Let us explain the proof of Proposition \ref{Exactness} more informally. If $V$ admits a costandard filtration, then Propositions \ref{StandardCostandard} and \ref{Nearby} imply that $\sfZ(V) *^s -$ is right t-exact modulo the kernel of Whittaker averaging at every unramified point. Corollary \ref{KernelContainment} then implies that it is also right t-exact modulo the kernel of Whittaker averaging at every tamely ramified point.
\end{remark}

\begin{remark}
Before Whittaker averaging, the Hecke operator $\sfS(V) *^x -$ is typically not t-exact. Indeed it acts on the constant sheaf on $\Bun$ by tensoring it with $\Gamma(\Gr, \sfS(V))$, which is typically concentrated in many cohomological degrees.
\end{remark}

\section{Application to central sheaves}
Mirković showed that convolution exact sheaves in the finite Hecke category are tilting, as is explained in Remark 7 of \cite{AB09}. 
This does not directly generalize to the affine Hecke category because the affine Weyl group does not contain a longest element. Indeed central sheaves are convolution exact but (before Whittaker averaging) they are typically not tilting.

For the remainder of this paper, we specialize $X$ to be a genus zero curve with tame ramification at two points. Thus $\Bun$ now denotes the moduli stack of $G$-bundles on $\bP^1$ with an $N$-reduction at zero and an $N^-$-reduction at infinity.\footnote{Choosing an $N^-$-reduction rather than an $N$-reduction at infinity produces the same moduli stack, but is more convenient for indexing the orbits.}

The Radon transforms are two different equivalences between $\Shv_{\Nilp}(\Bun)$ and the affine Hecke category. Bezrukavnikov and Morton-Ferguson adapt Mirković's argument to the affine setting by using the Radon transforms as a substitute for the longest element of the Weyl group \cite{BMF24}. Namely they show that an object in the affine Hecke category is tilting if and only if both of its Radon transforms are perverse. By combining this with the t-exactness proved in Proposition \ref{Exactness}, we will deduce the tilting property of Whittaker averaged central sheaves.

\subsection*{The Radon transform}
Here we recall the definitions of the two Radon transform functors. We prove that both are equivalences. The first sends costandards to standards, whereas the second sends standards to costandards.

Let $W^{\aff} = W \ltimes \Lambda$ be the extended affine Weyl group.
\begin{enumerate}
\item[-] Let $\Delta_w$ and $\nabla_w \in \Perv_{\cN}(I \backslash \Fl)$ be the standard and costandard extension of the universal local system on the orbit indexed by $w \in W^{\aff}$.
\item[-] Let $\Delta_w^{\thick}$ and $\nabla_w^{\thick} \in \Perv_{\Nilp}(\Bun)$ be the standard and costandard extension of the universal local system on the orbit indexed by $w \in W^{\aff}$.
\item[-] Let $\Shv_{\cN}^{\cpt}(I \backslash \Fl)$ be the subcategory of compact objects.
\end{enumerate}
The following two subcategories are different from each other because $\Bun$ is not quasi-compact.
\begin{enumerate}
\item[-] Let $\Shv_{\Nilp}^!(\Bun) \subset \Shv_{\Nilp}(\Bun)$ be the subcategory generated under finite colimits by $\Delta_w^{\thick}$ (equivalently this is the subcategory of compact objects).
\item[-] Let $\Shv_{\Nilp}^*(\Bun) \subset \Shv_{\Nilp}(\Bun)$ be the subcategory generated under finite colimits by $\nabla_w^{\thick}$ (these costandard extensions are typically not compact).
\end{enumerate}

The following proposition adapts \cite{Yu09} to the universal monodromic setting.

\begin{proposition}\label{Radon}
There are equivalences 
\begin{enumerate}[label=(\alph*)]
\item\label{Radon1} $- *^0 \Delta_1^{\thick}: \Shv_{\cN}^{\cpt}(I \backslash \Fl) \xrightarrow{\sim} \Shv_{\Nilp}^!(\Bun)$ sending $\nabla_w *^0 \Delta_1^{\thick} \simeq \Delta_w^{\thick}$,
\item\label{Radon2} $- *^0 \nabla_1^{\thick}: \Shv_{\cN}^{\cpt}(I \backslash \Fl) \xrightarrow{\sim} \Shv_{\Nilp}^*(\Bun)$ sending $\Delta_w *^0 \nabla_1^{\thick} \simeq \nabla^{\thick}_w$.
\end{enumerate}
\end{proposition}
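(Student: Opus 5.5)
The plan is to compute the functors on the standard and costandard generators, and then deduce full faithfulness from a Hom computation using adjunction on $\Bun$. Essential surjectivity onto the subcategories generated by $\Delta^{\thick}_w$ (respectively $\nabla^{\thick}_w$) is then automatic, once we know the generators are hit.

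\textbf{Step 1: the geometry.} The $I$-orbits on $\Bun$, indexed by $W^{\aff}$, match the $I$-orbits on $\Fl$ via the Bruhat decomposition for $G(\bC[t,t^{-1}])$. Indeed $\Bun \simeq I \backslash G(\bC[t,t^{-1}]) / I^-$, where $I^-$ is the opposite pro-unipotent Iwahori at infinity. The orbit indexed by $1$ is open, with trivial stabilizer modulo the torus. It is the ``big cell'' of Yun's Radon transform.

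\textbf{Step 2: compute the action on generators.} We first show $\nabla_w *^0 \Delta_1^{\thick} \simeq \Delta_w^{\thick}$. For $w=s$ a simple affine reflection, the convolution is computed on the $\bP^1$-fibration $\Hecke|_0 \to \Bun$ for the minimal parahoric. Using that the orbit of $1$ is open and $s$-adjacent to the orbit of $s$, a direct base-change computation (as in Yun) shows that the restriction to the orbit $s$ is the universal local system, and the costalk on the orbit $1$ vanishes. The universal monodromic bookkeeping is as in the finite case, via equation (4.1) of \cite{Ta25}.

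For general $w$, we write $w=u\tau$ with $\tau$ of length zero. Then $\tau$ acts by an automorphism of $\Bun$ permuting orbits, and we induct on length using $\nabla_{uv}\simeq\nabla_u*\nabla_v$ for $\ell(uv)=\ell(u)+\ell(v)$. The one point to check is that the orbits multiply correctly on the $\Bun$ side, i.e.\ $\nabla_s *^0 \Delta_v^{\thick}\simeq\Delta_{sv}^{\thick}$ when $\ell(sv)>\ell(v)$. This again reduces to the $\bP^1$-fibration computation. Part \ref{Radon2} is dual: we apply Verdier duality, which exchanges $\Delta$ and $\nabla$ on both sides.

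\textbf{Step 3: full faithfulness.} The compact objects $\Shv^{\cpt}_\cN(I\backslash\Fl)$ are generated under finite colimits by the $\nabla_w$ (and also by the $\Delta_w$). It therefore suffices to compare $\Hom(\nabla_u,\nabla_w)$ with $\Hom(\Delta_u^{\thick},\Delta_w^{\thick})$. We compute both sides using $\nabla_w\simeq\nabla_u*\nabla_{u^{-1}}^{-1}\ldots$; more efficiently, we use invertibility. Since $\Delta_{w^{-1}}*\nabla_w\simeq\delta$, we get
\[\Hom(\nabla_u,\nabla_w)\simeq\Hom(\Delta_{w^{-1}}*\nabla_u,\delta).\]
The functor $-*^0\Delta_1^{\thick}$ is compatible with left convolution, so both sides reduce to
\[\Hom(B *^0\Delta_1^{\thick},\Delta_1^{\thick})\quad\text{versus}\quad\Hom(B,\delta).\]
For the first of these, $\Delta_1^{\thick}=j_!\,\mathcal L$ on the open orbit, and adjunction gives $\Hom(B*^0\Delta_1^{\thick}, j_!\mathcal L)$. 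We instead use the dual generator $\nabla_1^{\thick}$ and
\[\Hom(\Delta^{\thick}_u,\nabla^{\thick}_v)=R\cdot\delta_{u,v},\]
which holds by orbit-wise adjunction. Combined with Step 2 (including the twisted version $\Delta_w*^0\Delta_1^{\thick}$ computed via $\Delta_w*\nabla_{w^{-1}}$), this matches the affine Hecke category identity $\Hom(\Delta_u,\nabla_v)=R\cdot\delta_{u,v}$. That identity holds universally monodromically by \cite{Ta25}.

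\textbf{Main obstacle.} The hard step is showing that the functor induces an isomorphism on these $\Hom$'s, rather than only matching them abstractly. In particular, one must know that $\Delta_u*^0\Delta_1^{\thick}$ is the object that pairs correctly with the $\nabla^{\thick}_v$. I would try to prove directly that
\[\Hom(B*^0\Delta_1^{\thick},\nabla_1^{\thick}) \simeq \Hom(B,\delta)\]
naturally in $B$. This is the statement that $!$-restriction of $B*^0\Delta_1^{\thick}$ to the open orbit computes the costalk of $B$ at the base point, which follows from the open orbit having free $I$-action. The main obstacle is the non-quasi-compactness of $\Bun$, together with the fact that $\nabla^{\thick}$ is not compact. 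This is why the target is taken to be the finite-colimit closures, and why each Hom must be checked on finitely many orbits.
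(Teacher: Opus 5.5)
Your Step 2 is a workable, if more laborious, alternative to the paper's computation. The paper gets every orbit restriction at once: using the Cartesian square with $I^{\glob}$, the contraction principle $b_!\simeq r^!$ and transversality $c^*\simeq c^![\text{shift}]$, it identifies the $*$-restriction of $\nabla_w*^0\Delta_1^{\thick}$ to each orbit with a shifted costalk of $\nabla_w$. No induction on length or $\bP^1$-fibration bookkeeping is needed.

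The genuine gap is in Step 3. Your reduction to comparing $\Hom(B*^0\Delta_1^{\thick},\Delta_1^{\thick})$ with $\Hom(B,\delta)$ is correct. You then swap the target $\Delta_1^{\thick}$ for $\nabla_1^{\thick}$, which is a different Hom, and the natural isomorphism you propose to prove,
\[\Hom(B*^0\Delta_1^{\thick},\nabla_1^{\thick})\simeq\Hom(B,\delta),\]
is false. Take $B=\nabla_s$ for a simple reflection $s$:
\begin{itemize}
\item the left side is $\Hom(\Delta_s^{\thick},\nabla_1^{\thick})=0$, since the orbits are disjoint;
\item the right side is Hom out of the $*$-stalk of $\nabla_s$ at the base point, which is nonzero. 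Up to shift this stalk is the $R$-dual of $\Delta_1/(e^{\alpha}-1)$, the $!$-restriction of $\Delta_s$, and its derived $R$-dual is nonzero. In the non-monodromic setting $\nabla_s$ even surjects onto $\delta$.
\end{itemize}
Your own geometric remark explains the mismatch. Restriction to the open orbit computes the \emph{costalk} of $B$, so the left side is Hom out of the costalk, whereas $\Hom(B,\delta)$ is Hom out of the stalk.

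More fundamentally, the orthogonality $\Hom(\Delta_u^{\thick},\nabla_v^{\thick})=R\cdot\delta_{u,v}$ cannot be transported to $\Hom(\Delta_u,\nabla_v)=R\cdot\delta_{u,v}$ through this functor. It sends $\nabla_u$, not $\Delta_u$, to $\Delta_u^{\thick}$. Moreover $\nabla_v^{\thick}$ is typically not even in the target $\Shv^!_{\Nilp}(\Bun)$; it is the image of $\Delta_v$ under the \emph{other} transform. For part (a) the Homs to compare are $\Hom(\nabla_u,\nabla_w)$ and $\Hom(\Delta_u^{\thick},\Delta_w^{\thick})$, and neither is a Kronecker delta.

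The missing idea is to use the right adjoint $a_*b^!$ of $-*^0\Delta_1^{\thick}$.
\begin{itemize}
\item A calculation of the same kind (the paper's contraction/transversality argument) shows $a_*b^!\Delta_w^{\thick}\simeq\nabla_w$, and in particular $a_*b^!\Delta_1^{\thick}\simeq\delta$.
\item The composite $a_*b^!(-*^0\Delta_1^{\thick})$ is a left $\Shv_{\cN}^{\cpt}(I\backslash\Fl)$-linear endofunctor fixing the monoidal unit, so it is isomorphic to the identity.
\item Hence the unit of the adjunction is an isomorphism and the Radon transform is fully faithful.
\end{itemize}
In your formulation, this is exactly what supplies $\Hom(B*^0\Delta_1^{\thick},\Delta_1^{\thick})\simeq\Hom(B,a_*b^!\Delta_1^{\thick})\simeq\Hom(B,\delta)$. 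With that in place, essential surjectivity follows from Step 2 as you say, and part (b) is analogous (or follows by Verdier duality).
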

\begin{proof}
Let $I^{\glob} \coloneqq G[t^{-1}] \times_G B^-$, where $G[t^{-1}] \rightarrow G$ is evaluation at infinity.
We will now construct a $\bC^{\times}$-action on $G_F$ that contracts $I^{\glob}$ to the torus and repels $I$ from the identity. Let $T \times \bC^{\times}$ act on $G_F$ by the adjoint $T$-action and the loop rotation $\bC^{\times}$-action. Let $\bC^{\times}$ act on $G_F$ via the coweight $\bC^{\times} \rightarrow T \times \bC^{\times}$ sending $z \mapsto (\rho(z^{-1}), z^{-n})$, for $n$ greater than the height of the highest root. 

For $v \in W^{\aff}$, let $i: T \rightarrow \Bun$ be the inclusion of the corresponding $T$-orbit. Consider the commuting diagram
\[\begin{tikzcd}
 & \arrow[dl, "c"'] I^{\glob} \arrow[r, "b"', shift right] \arrow[d] & \arrow[l, "r"', shift right] T \arrow[d, "i"] \\
I \backslash \Fl & \arrow[l, "a"]\Fl \arrow[r, "b"'] & \Bun.
\end{tikzcd}\]

\noindent Observe that
\begin{enumerate}[label=(\roman*)]
\item\label{Contract0} the square in the above diagram is Cartesian,
\item\label{Contract1} $b_!$ and $r^!$ are isomorphic on weakly $\bC^{\times}$-constructible sheaves by the contraction principal, 
\item\label{Contract2} $c^*$ and $c^!$ are isomorphic up to a shift by Proposition 5.4.13 of \cite{KS90}, because the $I^{\glob}$-orbits are transverse to the $I$-orbits.
\end{enumerate}
Therefore
\beq\label{RadonCalc} i^* (\nabla_w *^0 \Delta_1^{\thick})  \simeq i^*b_!a^* \nabla_w \overset{\ref{Contract0}}{\simeq}  b_! c^* \nabla_w \overset{\ref{Contract1}, \ref{Contract2}}{\simeq} r^! c^! \nabla_w[2(\ell(v) - \ell(w))]\eeq
is the universal local system if $v = w$ and vanishes otherwise.
Hence \beq\label{RadonNabla} \nabla_w *^0 \Delta_1^{\thick} \simeq \Delta_w^{\thick}.\eeq

The Radon transform $- *^0 \Delta_1^{\thick}$ admits a right adjoint $a_*b^!$. A calculation similar to \eqref{RadonCalc} shows that this right adjoint sends $a_*b^! \Delta_w^{\thick} \simeq \nabla_w$.
Therefore $a_*b^!(- *^0\Delta_1^{\thick})$ is equivalent to the identity functor, because it is $\Shv_{\cN}^{\cpt}(I \backslash \Fl)$-linear and preserves the monoidal unit. Hence the Radon transform $- *^0 \Delta_1^{\thick}$ is fully faithful. Moreover it is also essentially surjective by \eqref{RadonNabla}. This completes the proof of \ref{Radon1}. The proof of \ref{Radon2} is similar.
\end{proof}

\subsection*{Universal perversity}
Let $R \coloneqq \bZ[\Lambda]$ be the group ring of the coweight lattice. Regard $\Shv_{\Nilp}(\Bun)$ as linear over $R$ using the Hecke action at infinity.

\begin{enumerate}
\item[-] Let $\langle \Delta^{\thick}[\geq 0] \rangle \subset \Shv_{\Nilp}^!(\Bun)$ be the subcategory generated under extensions by $\Delta^{\thick}_w[i]$ for $w \in W^{\aff}$ and $i \geq 0$.
\item[-] Let $\langle \nabla^{\thick}[\leq 0] \rangle \subset \Shv_{\Nilp}^*(\Bun)$ be the subcategory generated under extensions by $\nabla_w^{\thick}[i]$ for $w \in W^{\aff}$ and $i \leq 0$.
\end{enumerate}

\begin{lemma}\label{UniversalPerverse}
If $A \in \Shv_{\Nilp}^!(\Bun)$ and $B \in \Shv_{\Nilp}^*(\Bun)$ then 
\begin{enumerate}[label=(\alph*)]
\item $A \in \langle \Delta^{\thick}[\geq 0] \rangle$ if and only if $A \leq 0$,
\item $B \in \langle \nabla^{\thick}[\leq 0] \rangle$ if and only if $B \otimes_R M \geq 0$ for every $R$-module $M$.
\end{enumerate}
\end{lemma}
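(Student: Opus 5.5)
The plan is to argue orbit by orbit, using the stratification of $\Bun$ by the orbits indexed by $w \in W^{\aff}$ together with the standard recollement. The two computations I rely on are that $i_w^*\Delta^{\thick}_v$ and $i_w^!\nabla^{\thick}_v$ vanish for $v \neq w$ and equal the universal local system $\mathcal{L}_w$ (in perverse degree) for $v = w$. These are the same contraction computations as \eqref{RadonCalc}. Here $i_w$ is the inclusion of the orbit, which is a quotient of a torus by $T$. Sheaves on the orbit with universal monodromy are then identified with $R$-modules, and $\mathcal{L}_w$ corresponds to the free module $R$ in perverse degree.

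For (a), the forward direction is immediate. Each $\Delta^{\thick}_w$ is perverse, and $!$-extension from an affine (locally closed) orbit is right t-exact, so $\Delta^{\thick}_w[i] \leq 0$ for $i \geq 0$; since $\leq 0$ is closed under extensions, the whole of $\langle \Delta^{\thick}[\geq 0]\rangle$ lies in $\leq 0$. For the converse, let $A \in \Shv^!_{\Nilp}(\Bun)$. Because $A$ is built from finitely many $\Delta^{\thick}_w$, it is supported on a finite union of orbits. Choose an orbit $w$ that is open in the support. The adjunction triangle $j_{w!}j_w^*A \to A \to A'$ has $A'$ supported on fewer orbits and still in $\Shv^!_{\Nilp}$, and if $A \leq 0$ then also $A' \leq 0$ by right exactness. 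The restriction $j_w^*A$ is a perfect complex of $R$-modules, and it lies in perverse degrees $\leq 0$, i.e. in cohomological degrees $\leq -\dim$ after shift. Since $R = \bZ[\Lambda]$ is regular, any perfect complex concentrated in degrees $\leq 0$ is an iterated extension of shifts $R[i]$ with $i \geq 0$: resolve the top cohomology by free modules, or argue by induction on the amplitude using that a perfect complex has a projective model, and that projective modules over $\bZ[\Lambda]$ are stably free by Quillen--Suslin--Swan. Consequently $j_{w!}j_w^*A \in \langle \Delta^{\thick}[\geq 0]\rangle$, and we conclude by induction on the number of orbits.

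For (b) I take the dual approach with $*$-extensions and $!$-restrictions. Each $\nabla^{\thick}_w \otimes_R M$ is the $*$-extension of the local system with fiber $M$ on an affine orbit, so it is $\geq 0$ because $*$-extension from an affine embedding is left t-exact. This gives the forward direction. For the converse, peel off a closed orbit in the support via $i_{w*}i_w^! B \to B \to B'$. The key point is that the condition "$B \otimes_R M \geq 0$ for all $M$" says precisely that $i_w^!B$ is a perfect complex $C$ of $R$-modules such that $C \otimes_R M$ lies in degrees $\geq 0$ for every $M$. Such a $C$ has Tor-amplitude in $[0,\infty)$, so it is quasi-isomorphic to a complex of projectives in degrees $\geq 0$, and hence lies in the extension closure of $R[i]$ for $i \leq 0$. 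Then $i_{w*}i_w^!B$ lies in $\langle\nabla^{\thick}[\leq 0]\rangle$, and $B'$ again satisfies the hypothesis, because $i_w^!$ commutes with $\otimes_R M$ for these compactly generated pieces.

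The main obstacle will be the homological algebra over $R$, namely producing extensions by free modules rather than merely projective ones, together with the commutation of $\otimes_R M$ with the restriction functors on the non-quasi-compact $\Bun$. I would handle the first using the fact that projectives over $\bZ[\Lambda]$ are free (Swan), and the second by noting that only finitely many orbits are involved.
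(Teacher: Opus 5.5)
Your proposal is correct and follows essentially the paper's argument: the paper also works orbit by orbit with $\Hom(\Delta^{\thick}_w, B)$ (your $i_w^!B$, by adjunction), uses the Tor-amplitude condition to get a complex of projective $R$-modules in degrees $\geq 0$, invokes Swan's theorem that finitely generated projective $R$-modules are free, and concludes by the Cousin filtration. One point to make precise in (b): orbit closures in $\Bun$ contain infinitely many orbits, so a $\nabla$-type object has no closed orbit in its support. You should instead take $w$ minimal in the finite set of orbits with $i_w^!B \neq 0$. Then, with $Z$ the closure of the $w$-orbit and $j$ its open complement, $i_{w*}i_w^!B \simeq i_{Z*}i_Z^!B$, which supplies the map to $B$ and identifies the cone with $j_*j^*B$.
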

\begin{proof}
Suppose that $B \otimes_R M \geq 0$ for every $R$-module. If $w \in W^{\aff}$ then $\Hom(\Delta_w, B) \otimes_R M \geq 0$ for every $R$-module. Therefore that $\Hom(\Delta_w, B)$ is quasi-isomorphic to a complex of projective $R$-modules concentrated in degrees $\geq 0$. Moreover \cite{Sw78} says that all finitely generated projective $R$-modules are free. By the Cousin filtration $B \in \langle \nabla^{\thick}[\leq 0] \rangle$.
\end{proof}

\subsection*{Proof of the tilting property}
Here we prove the tilting property of Whittaker averaged central sheaves.

We say that a sheaf in $\Shv_{\cN}(I \backslash G_F/I)$ admits a standard (respectively costandard) filtration if it has a filtration whose graded pieces are isomorphic to $\Delta_w$ (respectively $\nabla_w$).
We call a sheaf `tilting' if it admits both standard and costandard filtrations.

\begin{enumerate}
\item[-] Let $\langle \nabla[\geq 0] \rangle$ be the subcategory of $\Shv_{\cN}(I \backslash \Fl)$ generated under extensions by $\nabla_w[i]$ for $w \in W^{\aff}$ and $i \geq 0$.
\item[-] Let $\langle \Delta[\leq 0] \rangle$ be the subcategory of $\Shv_{\cN}(I \backslash \Fl)$ generated under extensions by $\Delta_w[i]$ for $w \in W^{\aff}$ and $i \leq 0$.
\end{enumerate}

\begin{theorem}\label{CentralTilting}
Let $V \in \Rep(\sfG)$ and $w \in W^{\aff}$. We have
\begin{enumerate}[label=(\alph*)]
\item\label{CentralTilting1} if $V$ admits a standard filtration, then $\Xi * \sfZ(V) * \Delta_w$ admits a standard filtration,
\item\label{CentralTilting2} if $V$ admits a costandard filtration, then $\Xi * \sfZ(V) * \nabla_w$ admits a costandard filtration.
\end{enumerate}
In particular
\begin{enumerate}[label=(\alph*)]\setcounter{enumi}{2}
\item\label{CentralTilting3} if $V$ is tilting, then $\Xi * \sfZ(V)$ is tilting.
\end{enumerate}
\end{theorem}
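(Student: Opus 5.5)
The plan is to follow the Bezrukavnikov--Morton-Ferguson strategy: use the Radon transforms of Proposition \ref{Radon} as a substitute for the longest element, so that having a (co)standard filtration becomes a t-structure condition on $\Shv_{\Nilp}(\Bun)$. That condition is then supplied by Proposition \ref{Exactness} applied at the tamely ramified point $0 \in \bP^1$. I treat \ref{CentralTilting1} in detail; \ref{CentralTilting2} is the mirror argument, and \ref{CentralTilting3} follows from both with $w = 1$.

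\emph{Step 1: $A$ is perverse.} For \ref{CentralTilting1}, set $A \coloneqq \Xi * \sfZ(V) * \Delta_w \in \Shv_{\cN}^{\cpt}(I \backslash \Fl)$. Central sheaves are convolution exact, so $\sfZ(V) * \Delta_w$ is perverse. Convolution with $\Xi$ is t-exact, since $\Xi$ is a tilting object in the finite Hecke category (as in the proof of Theorem \ref{GeneralKernel}). Hence $A$ is perverse.

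\emph{Step 2: apply the second Radon transform.} By Proposition \ref{Radon}\ref{Radon2}, and using associativity of the Hecke action at $0$,
\[A *^0 \nabla_1^{\thick} \simeq \Xi *^0 \sfZ(V) *^0 (\Delta_w *^0 \nabla_1^{\thick}) \simeq \Xi *^0 \sfZ(V) *^0 \nabla_w^{\thick}.\]
For any $R$-module $M$, the object $\nabla_w^{\thick} \otimes_R M$ is a $*$-extension of a local system placed in perverse degree, so it is $\geq 0$. The $R$-linearity comes from the Hecke action at infinity, which commutes with the action at $0$. Since $V$ has a standard filtration, Proposition \ref{Exactness} says $\Xi *^0 \sfZ(V) *^0 -$ is left t-exact on $\Shv_{\Nilp}(\Bun)$. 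Therefore $(A *^0 \nabla_1^{\thick}) \otimes_R M \geq 0$ for every $M$. Lemma \ref{UniversalPerverse}(b) then puts $A *^0 \nabla_1^{\thick}$ in $\langle \nabla^{\thick}[\leq 0]\rangle$. Since the Radon transform is an equivalence sending $\Delta_v \mapsto \nabla_v^{\thick}$, pulling back gives $A \in \langle \Delta[\leq 0]\rangle$.

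\emph{Step 3: extract the standard filtration.} Since $A \in \langle \Delta[\leq 0]\rangle$ and $\Hom(\Delta_v, \nabla_u[j])$ is nonzero only for $u = v$ and $j = 0$, we get $\Hom(A, \nabla_v[j]) = 0$ for $j > 0$. Perversity of $A$ and $\nabla_v$ gives the vanishing for $j < 0$. Moreover $\Hom(A, \nabla_v)$ is projective over $R$, as in the proof of Lemma \ref{UniversalPerverse}, and hence free by \cite{Sw78}. A Cousin filtration argument on the support of $A$ then produces a filtration with graded pieces direct sums of $\Delta_v$, which is the standard filtration.

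\emph{Step 4: the costandard case and tilting.} For \ref{CentralTilting2} I would run the same argument with the Radon transform of Proposition \ref{Radon}\ref{Radon1}, which gives $\nabla_w *^0 \Delta_1^{\thick} \simeq \Delta_w^{\thick}$. Here one needs right t-exactness of $\Xi *^0 \sfZ(V) *^0 -$, which Proposition \ref{Exactness} provides when $V$ has a costandard filtration, together with Lemma \ref{UniversalPerverse}(a), where no tensoring with $M$ is needed. Part \ref{CentralTilting3} follows by taking $w = 1$ in both parts, since a tilting $V$ has both filtrations.

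The main obstacle is Step 3, upgrading membership in $\langle \Delta[\leq 0]\rangle$ plus perversity to a genuine standard filtration with integer, universal monodromic coefficients. This is exactly where the freeness result of \cite{Sw78} and the universal version with arbitrary $R$-modules $M$ in Lemma \ref{UniversalPerverse} are needed. Over a field the step would be formal; over $R$ one must check that no torsion phenomena spoil the filtration. A secondary point to check is that the associativity and linearity over the Hecke action at infinity hold in the sense needed for the identification in Step 2.
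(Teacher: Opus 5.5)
Your proposal is correct and follows essentially the paper's own proof. Perversity comes from convolution exactness of $\Xi$ and $\sfZ(V)$, the opposite bound comes from transporting through the Radon transforms of Proposition \ref{Radon} using Proposition \ref{Exactness} and Lemma \ref{UniversalPerverse}, and your Step 3 simply unpacks the paper's ``combining'' step.

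One precision is needed in Step 3. Projectivity of $\Hom(A,\nabla_v)$ requires $\Hom(A,\nabla_v\otimes_R M[j]) = 0$ for $j<0$ and every $R$-module $M$, not only for $M=R$. This holds because $A$ is perverse and $\nabla_v\otimes_R M\geq 0$. In part (b) the corresponding input is perversity of $\Xi*\sfZ(V)*\nabla_w\otimes_R M$ for all $M$, which the paper uses explicitly.
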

\begin{proof}
First we prove \ref{CentralTilting1}. Suppose that $V$ admits a standard filtration.
For every $R$-module $M$, Proposition \ref{Exactness} and Lemma \ref{UniversalPerverse} imply \[\Xi *^0 \sfZ(V) *^0 \nabla_w^{\thick} \otimes_R M \; \in \; \langle \nabla^{\thick}[\leq 0]\rangle.\]
Therefore Proposition \ref{Radon} implies \beq\label{DeltaLeq} \Xi * \sfZ(V) * \Delta_w \; \in \; \langle \Delta[\leq 0] \rangle.\eeq

Moreover Theorem 1(a) of \cite{Ga01} implies that the central sheaf $\sfZ(V)$ is convolution exact. Since $\Xi$ is also convolution exact, $\Xi * \sfZ(V) * \nabla_w$ is perverse. Hence \beq\label{DeltaGeq}\Xi * \sfZ(V) * \Delta_w \; \in \; \langle \Delta[\geq 0] \rangle.\eeq
Combining \eqref{DeltaLeq} and \eqref{DeltaGeq} gives that $\Xi * \sfZ(V) * \Delta_w$ admits a standard filtration.

Now we prove \ref{CentralTilting2}. 
Suppose that $V$ admits a costandard filtration. Then Proposition \ref{Exactness} and Lemma \ref{UniversalPerverse} imply \[\Xi *^0 \sfZ(V) *^0 \Delta_w^{\thick} \; \in \; \langle \Delta^{\thick}[\geq 0] \rangle.\]
Therefore Proposition \ref{Radon} implies \beq\label{NablaGeq} \Xi * \sfZ(V) * \nabla_w \; \in \; \langle \nabla[\geq 0] \rangle.\eeq

Moreover $\Xi$ and $\sfZ(V)$ are both convolution exact. Therefore $\Xi * \sfZ(V) * \nabla_w \otimes_R M$ is perverse for every $R$-module $M$. Hence \beq\label{NablaLeq}\Xi * \sfZ(V) * \nabla_w \; \in \; \langle \nabla[\leq 0] \rangle.\eeq
Combining \eqref{NablaGeq} and \eqref{NablaLeq} gives that $\Xi * \sfZ(V) * \nabla_w$ admits a costandard filtration.
\end{proof}


\begin{remark}
Theorem \ref{CentralTilting}\ref{CentralTilting3} was proved for field coefficients in \cite{AB09, BRR20}, with the exception of some very small primes. Their method of reduction to quasi-minuscule representations does not seem to lead to a proof of Theorem \ref{CentralTilting}\ref{CentralTilting1} and \ref{CentralTilting2}.
\end{remark}

\begin{remark}\label{FiniteExact}
Theorem \ref{CentralTilting}\ref{CentralTilting1} can be used to show that Bezrukavnikov's equivalence becomes t-exact after restriction to the finite Hecke category, generalizing Corollary 42(a) of \cite{Be16} to integer coefficients. Theorem \ref{CentralTilting}\ref{CentralTilting3} does not seem to suffice for this, because every $\sfG$-module admits a nonozero map from a standard module but not necessarily from a tilting module.
\end{remark}

\end{document}